\numberwithin{equation}{section}
\numberwithin{figure}{section}
\theoremstyle{plain}
\newtheorem{theorem}{Theorem}[section]
\newtheorem{lemma}[theorem]{Lemma}
\newtheorem{proposition}[theorem]{Proposition}
\newtheorem{corollary}[theorem]{Corollary}
\theoremstyle{definition}
\newtheorem{definition}[theorem]{Definition}
\newtheorem{remark}[theorem]{Remark}
\newcommand{\bitem}{\begin{itemize}}
\newcommand{\eitem}{\end{itemize}}
\newcommand{\mc}[1]{\mathcal{#1}}
\newcommand{\mb}[1]{\mathbb{#1}}
\newcommand{\N}{\mathbb{N}}
\newcommand{\R}{\mathbb{R}}
\newcommand{\C}{\mathbb{C}}
\newcommand{\TT}{\mathbb{T}}
\newcommand{\bpm}{\begin{pmatrix}}
\newcommand{\epm}{\end{pmatrix}}
\newcommand{\bvm}{\begin{vmatrix}}
\newcommand{\evm}{\end{vmatrix}}
\newcommand{\bsm}{\left(\begin{smallmatrix}}
\newcommand{\esm}{\end{smallmatrix}\right)}
\newcommand{\T}{\top}
\newcommand{\ol}[1]{\overline{#1}}
\newcommand{\wh}[1]{\widehat{#1}}
\newcommand{\wt}[1]{\widetilde{#1}}
\newcommand{\la}{\langle}
\newcommand{\ra}{\rangle}
\newcommand{\mrm}[1]{\mathrm{#1}}
\newcommand{\veps}{\varepsilon}
\newcommand{\embedded}{\hookrightarrow}
\newcommand{\w}{\omega}
\newcommand{\gdw}{\Leftrightarrow}
\newcommand{\vphi}{\varphi}
\newcommand{\Rep}{\mathfrak{R}}
\newcommand{\eins}{\mathbb{1}}
\newcommand{\LG}[1]{\mathrm{#1}}
\newcommand{\expm}{\exp_{\mathrm{m}}}
\newcommand{\logm}{\log_{\mathrm{m}}}
\DeclareMathSymbol{\mydiv}{\mathbin}{symbols}{"04}
\DeclareMathOperator{\Diag}{Diag}
\DeclareMathOperator{\diag}{diag}
\DeclareMathOperator{\rint}{rint}
\DeclareMathOperator{\vvec}{vec}
\DeclareMathOperator{\ggrad}{grad}
\DeclareMathOperator{\Exp}{Exp}
\newcommand{\sst}[1]{{\scriptscriptstyle #1}}
\def\widebreve{\mathpalette\wide@breve}
\def\wide@breve#1#2{\sbox\z@{$#1#2$}%
     \mathop{\vbox{\m@th\ialign{##\crcr
\kern0.08em\brevefill#1{0.8\wd\z@}\crcr\noalign{\nointerlineskip}%
                    $\hss#1#2\hss$\crcr}}}\limits}
\def\brevefill#1#2{$\m@th\sbox\tw@{$#1($}%
  \hss\resizebox{#2}{\wd\tw@}{\rotatebox[origin=c]{90}{\upshape(}}\hss$}
\title[Quantum State Assignment Flows]{Quantum State Assignment Flows}
\author[J.~Schwarz, J.~Cassel, B.~Boll, M.~G\"arttner, P.~Albers, C.~Schn\"{o}rr]{Jonathan Schwarz, Jonas Cassel, Bastian Boll, Martin G\"arttner, Peter Albers, Christoph Schn\"{o}rr}
\address[Jonathan Schwarz, Jonas Cassel, Bastian Boll, Christoph Schn\"{o}rr]{Institute for Mathematics, Image and Pattern Analysis Group, Heidelberg University, Germany} 
\email{jonathan.schwarz@iwr.uni-heidelberg.de}
\urladdr{\url{http://ipa.math.uni-heidelberg.de}}
\address[Peter Albers]{Institute for Mathematics, Reserach Station Geometry \& Dynamics, Heidelberg University, Germany} 
\email{palbers@mathi.uni-heidelberg.de}
\urladdr{\url{https://www.mathi.uni-heidelberg.de/~geodyn/members/Albers.html}}
\address[Martin G\"{a}rttner]{Physikalisches Institut and Kirchhoff Institute for Physics, Heidelberg University, Germany}
\email{martin.gaerttner@kip.uni-heidelberg.de}
\urladdr{\url{https://mbqd.de}}
\date{\today} 
\thanks{This work is funded by the Deutsche Forschungsgemeinschaft (DFG), grant SCHN 457/17-1, within the priority programme SPP 2298: Theoretical Foundations of Deep Learning. This work is funded by the Deutsche Forschungsgemeinschaft (DFG) under Germany's Excellence Strategy EXC-2181/1 - 390900948 (the Heidelberg STRUCTURES Excellence Cluster).}
\keywords{Assignment flows, Riemannian gradient flows, density matrix, information geometry}
\subjclass[2020]{53B12, 62H35, 68T07}
\newsavebox{\@brx}
\newcommand{\llangle}[1][]{\savebox{\@brx}{\(\m@th{#1\langle}\)}%
  \mathopen{\copy\@brx\kern-0.5\wd\@brx\usebox{\@brx}}}
\newcommand{\rrangle}[1][]{\savebox{\@brx}{\(\m@th{#1\rangle}\)}%
  \mathclose{\copy\@brx\kern-0.5\wd\@brx\usebox{\@brx}}}
\begin{document}
\maketitle

\begin{abstract}
  This paper introduces assignment flows for density matrices as state spaces for representing and analyzing data associated with vertices of an underlying weighted graph. Determining an assignment flow by geometric integration of the defining dynamical system causes an interaction of the non-commuting states across the graph, and the assignment of a pure (rank-one) state to each vertex after convergence. Adopting the Riemannian Bogoliubov-Kubo-Mori metric from information geometry leads to closed-form local expressions which can be computed efficiently and implemented in a fine-grained parallel manner. 
  
  Restriction to the submanifold of commuting density matrices recovers the assignment flows for categorial probability distributions, which merely assign labels from a finite set to each data point. As shown for these flows in our prior work, the novel class of quantum state assignment flows can also be characterized as Riemannian gradient flows with respect to a non-local non-convex potential, after proper reparametrization and under mild conditions on the underlying weight function. This weight function generates the parameters of the layers of a neural network, corresponding to and generated by each step of the geometric integration scheme.
  
Numerical results indicates and illustrate the potential of the novel approach for data representation and analysis, including the representation of correlations of data across the graph by entanglement and tensorization.
\end{abstract}

\vspace{0.5cm}
\tableofcontents

\section{Introduction}\label{sec:Introduction}

\subsection{Overview and Motivation}

A basic task of data analysis is categorization of observed data. We consider the following scenario: On a given undirected, weighted graph $\mc{G}=(\mc{V},\mc{E},w)$, data 
$D_{i}\in\mc{X}$
are observed as points in a metric space 
$(\mc{X},d_{\mc{X}})$ at each vertex $i\in\mc{V}$. Categorization means to determine an assignment 
\begin{equation}
D_{i}\;\to\; j\in\{1,\dotsc,c\}=:[c]
\end{equation}
of a \textit{class label} $j$ out of a \textit{finite} set of labels to each data point $D_{i}$. Depending on the application, labels carry a specific meaning, e.g.~type of tissue in medical image data, object type in computer vision or land use in remote sensing data. The decision at any vertex typically depends on decisions at other vertices. Thus the overall task of labeling data on a graph constitutes a particular form of \textit{structured prediction} in the field of machine learning \cite{StructuredPrediction:2007aa}.

\textit{Assignment flows} denote a particular class of approaches for data labeling on graphs \cite{Astrom:2017ac,Schnorr:2019aa}. The basic idea is to represent each possible label assignment at vertex $i\in\mc{V}$ by an \textit{assignment vector} $S_{i}\in\Delta_{c}$ in the standard probability simplex, whose vertices encode the unique label assignment for every label by the corresponding unit vector $e_{j},\; j\in[c]$. Data labeling is accomplished by computing the flow $S(t)$ of the dynamical system
\begin{equation}\label{eq:S-flow-intro}
\dot S = R_{S}[\Omega S],\qquad S(0)=S_{0},
\end{equation}
with the row-stochastic matrix $S(t)$ and row vectors $S_{i}(t)$ as state, which under mild conditions converges to unique label assignment vectors (unit vectors) at every vertex $i\in\mc{V}$ \cite{zern2021assignment}. The vector field on the right-hand side in \eqref{eq:S-flow-intro} is parametrized by parameters collected in a matrix $\Omega$. These parameters strongly affect the contextual label assignments. They can be learned from data in order to take into account typical relations of data in the current field of application \cite{Huhnerbein:2021th}. For a demonstration of the application of this approach to a challenging medical imaging problem, we refer to \cite{Sitenko:2021vu}.

From a geometric viewpoint, the system \eqref{eq:S-flow-intro} can be characterized as a collection of individual flows $S_{i}(t)$ at each vertex which are \textit{coupled} by the parameters $\Omega$. Each individual flow is determined by a \textit{replicator equation} which constitutes a basic class of dynamical systems known from evolutionary game theory \cite{Hofbauer:2003aa,Sandholm:2010aa}. By restricting each vector $S_{i}(t)$ to the relative interior $\mathring{\Delta}_{c}$ of the probability simplex (i.e.~the set of strictly positive discrete probability vectors) and by turning this convex set into a statistical manifold equipped with the Fisher-Rao geometry \cite{Amari:2000aa}, the assignment flow \eqref{eq:S-flow-intro} becomes a Riemannian ascent flow on the corresponding product manifold. The underlying information geometry is not only important for making the flow converge to unique label assignments but also for the design of efficient algorithms that actually determine the assignments \cite{Zeilmann:2020aa}. For extensions of the basic assignment flow approach to unsupervised scenarios of machine learning and for an in-depth discussion of connections to other closely related work on structured prediction on graphs, we refer to \cite{Zern:2020ab,Zisler:2020aa} and \cite{Sitenko:2023aa}, respectively.

In this paper, we study a novel and substantial generalization of assignment flows from the different point of view: assignment of labels to metric data where the labels are elements of a \textit{continuous} set. This requires to replace the simplex $\Delta_{c}$ as state space which can only represent assignments of labels from a \textit{finite} set. The substitute for assignment vectors $S_{i},\; i\in\mc{V}$ are Hermitian positive definite \textit{density matrices} $\rho_{i},\; i\in\mc{V}$ with unit trace, 
\begin{equation}
\mc{D}_{c} = \{\rho\in\C^{c\times c}\colon \rho=\rho^{\ast},\; \tr\rho=1\}.
\end{equation}
Accordingly, the finite set of unit vectors $e_{j},\; j\in[c]$ (vertices of $\Delta_{c}$) are replaced by \textit{rank-one} density matrices $\rho^{\infty}$, a.k.a.~\textit{pure states} in quantum mechanics \cite{Bengtsson:2017aa}. The resulting \textit{quantum state assignment flow (QSAF)},
\begin{equation}\label{eq:QSAF-intro}
\dot\rho = \Rep_{\rho}\big[\Omega[\rho]\big],\quad
\rho(0)=\rho_{0},
\end{equation}
has a form similar to \eqref{eq:S-flow-intro} due to adopting the design strategy: the system \eqref{eq:QSAF-intro} couples the individual evolutions $\rho_{i}(t)$ at each vertex $i\in\mc{V}$ through parameters $\Omega$, and the underlying information geometry causes convergence of each $\rho_{i}(t)$ towards a pure state. Using a different state space  $\mc{D}_{c}$ (rather than $\mathring\Delta_{c}$ in \eqref{eq:S-flow-intro}) requires to adopt a different Riemannian metric which results in a corresponding definition of the operator $\Rep_{\rho}$.

Our approach is natural in that restricting \eqref{eq:QSAF-intro} to \textit{diagonal} density matrices results in \eqref{eq:S-flow-intro}, after identifying each vector $\diag(\rho_{i})$ of diagonal entries of the density matrix $\rho_{i}$ with an assignment vector $S_{i}\in\mathring{\Delta}_{c}$. Conversely, \eqref{eq:QSAF-intro} considerably generalizes \eqref{eq:S-flow-intro} and enhances modelling expressivity due to the \textit{noncommutative} interaction of the state spaces $\rho_{i},\; i\in\mc{V}$ across the underlying graph $\mc{G}$, when the quantum state assignment flow is computed by applying geometric numerical integration to \eqref{eq:QSAF-intro}.

We regard our approach merely as an \textit{approach to data representation and analysis}, rather than a contribution to quantum mechanics. For example, the dynamics \eqref{eq:QSAF-intro} clearly differs from the Hamiltonian evolution of quantum systems. Yet we adopt the term `quantum state' since not only density matrices as state spaces, but also the related information geometry, have been largely motivated by quantum mechanics and quantum information theory \cite{Amari:2000aa,Petz:2008aa}.

\subsection{Contribution and Organization}

Section~\ref{sec:information-geometry} summarizes the information geometry of both the statistical manifold of categorial distributions and the manifold of strictly positive definite density matrices. 
Section~\ref{sec:AF} summarizes the assignment flow approach \eqref{eq:S-flow-intro}, as a reference for the subsequent generalization to \eqref{eq:QSAF-intro}. This generalization is the main contribution of this paper and presented in Section~\ref{sec:DAF-all}. Each row of the table below specifies the section where an increasingly general version of the original assignment flow (left column) is generalized to the corresponding quantum state assignment flow (right column, same row).

\begin{center}\small
\begin{tabular}{|c|c|} 
\hline
\textbf{Assignment Flow (AF)}  & \textbf{Quantum State AF (QSAF)}
\\ \hline\hline
single-vertex AF (Section \ref{sec:single-vertex-AF}) 
& single-vertex QSAF (Section \ref{sec:single-vertex-DAF}) 
\\ \hline
AF approach (Section \ref{sec:introduction_AF})
& QSAF approach (Section \ref{sec:DAF})
\\ \hline
Riemannian gradient AF (Section \ref{sec:standardAF_S-flow})
& Riemannian gradient QSAF (Section \ref{sec:S-DAF})
\\ \hline
\multicolumn{2}{|c|}{
recovery of the AF from the QSAF by restriction (Section \ref{sec:recovering-af-cat-distr})
}
\\ \hline
\end{tabular}
\end{center}

Alternative metrics on the positive definite matrix manifold which have been used in the literature, are reviewed in Section~\ref{sec:other_geometries}, in order to position our approach also from this point of view. 
Few academical experiments illustrate properties of the novel approach in Section~\ref{sec:Experiments}. Working out a particular scenario of data analysis is beyond the scope of this paper. 
We conclude and indicate directions of further work in Section~\ref{sec:Conclusion}. In order not to compromise the reading flow, proofs are listed in Section \ref{sec:appendix}.

This paper considerably elaborates the short preliminary conference version \cite{Schwarz:2023aa}. \nocite{SSVM:2023aa}

\subsection{Basic Notation}
For the readers convenience, we specify below the basic notation and notational conventions used in this paper.

\vspace{0.2cm}
\noindent
\begin{tabular}{ll}
$[c]$ &
$\{1,2,\dotsc,c\},\quad c\in\N$\\
$\eins_{c}$ & $(1,1,\dotsc,1)^{\T}\in\R^{c}$\\
$\R^{c}_{+}$ &
$\{x\in\R^{c}\colon x_{i}\geq 0,\; i\in[c]\}$\\
$\R^{c}_{++}$ &
$\{x\in\R^{c}\colon x_{i}> 0,\; i\in[c]\}$\\
$e_{1}, e_{2}, \dotsc$ & canonical basis vectors of $\R^{c}$\\
$\la u,v\ra$ & Euclidean inner vector product\\
$\|u\|$ & Euclidean norm $\sqrt{\la u,u \ra}$\\
$I_{c}$ & unit matrix of $\R^{c\times c}$\\
$p\cdot q$ & componentwise vector multiplication $(p\cdot q)_{i}=p_{i}q_{i},\;i\in[c],\;p,q\in\R^{c}$\\
$\frac{q}{p}$ & componentwise division $\big(\frac{q}{p}\big)_{i}=\frac{q_{i}}{p_{i}},\; i\in[c],\; q\in\R^{c},\; p\in\R_{++}^{c}$\\
$\mc{H}_{c}$ & space of Hermitian $c\times c$ matrices (cf.~\eqref{eq:def-Hc}) \\
$\tr(A)$ & trace $\sum_{i} A_{ii}$ of a matrix $A$\\
$\la A, B \ra$ & matrix inner product $\tr(A B)$, $A, B\in\mc{H}_{c}$ \\
$[A,B]$ & commutator $A B - B A$ \\
$\Diag(v)$ & the diagonal matrix with vector $v$ as entries\\
$\diag(V)$ & the vector of the diagonal entries of a square matrix $V$\\
$\expm$ & the matrix exponential\\
$\logm$ & the matrix logarithm $\expm^{-1}$\\
$\Delta_{c}$ & the set of discrete probability vectors of dimension $c$ (cf.~\eqref{eq:def-Delta-c})\\
$\mc{S}_{c}$ & the relative interior of $\Delta_{c}$, i.e.~the set of strictly positive probability vectors (cf.~$\eqref{eq:def-mcSc}$)\\
$\mc{W}_{c}$ & the product manifold $\mc{S}_{c}\times\dotsb\times\mc{S}_{c}$ (cf.~$\eqref{eq:def-mcW}$)\\
$\mc{P}_{c}$ & the set of symmetric positive definite $c\times c$ matrices (cf.~\eqref{eq:def-mcPc})\\
$\mc{D}_{c}$ & the subset of matrices in $\mc{P}_{c}$ whose trace is equal to $1$ (cf.~\eqref{eq:def-mcDc})\\
$\mc{Q}_{c}$ & the product manifold $\mc{D}_{c}\times\dotsb\times\mc{D}_{c}$ (cf.~\eqref{eq:def-mcQ})\\
$\eins_{\mc{S}_{c}}$ & barycenter $\frac{1}{c}\eins_{c}$ of the manifold $\mc{S}_{c}$ \\
$\eins_{\mc{W}_{c}}$ & barycenter $(\eins_{\mc{S}_{c}}, \eins_{\mc{S}_{c}},\dotsc, \eins_{\mc{S}_{c}})^{\T}$ of the manifold $\mc{W}$ \\
$\eins_{\mc{D}_{c}}$ & matrix $\Diag(\eins_{\mc{S}_{c}})\in\mc{D}_{c}\subset \C^{c\times c}$\\
$g_{p}, g_{W}, g_{\rho}$ & the Riemannian metrics on $\mc{S}_{c}, \mc{W}_{c}, \mc{D}_{c}$ (cf.~\eqref{eq:def-gp}, \eqref{eq:def-g-W}, \eqref{eq:Bogoliubov})\\
$T_{c,0},\mc{T}_{c,0},\mc{H}_{c,0}$ & the tangent spaces to $\mc{S}_{c}, \mc{W}_{c}, \mc{D}_{c}$ (cf.~\eqref{eq:def-T0}, \eqref{eq:def-g-W}, \eqref{eq:def-mcH-0-c})\\
$\pi_{c,0}, \Pi_{c,0}$ & orthogonal projections onto $T_{0}, \mc{H}_{c,0}$ (cf.~\eqref{eq:def-Pi0-p}, \eqref{eq:def-Pic0})\\
$R_{p}, R_{W}, \Rep_{\rho}$ & replicator operators associated with the assignment flows \\
& on $\mc{S}_{c}, \mc{W}_{c}, \mc{D}_{c}, \mc{Q}_{c}$ (cf.~\eqref{eq:def-Rp}, \eqref{eq:def-replicator-mcW}, \eqref{eq:def-Rrho}, \eqref{eq:def-R-rho})\\
$\partial$ & Euclidean gradient operator: $\partial f(p) = \big(\partial_{p_{1}} f(p), \partial_{p_{2}} f(p),\dotsc\big)^{\T}$\\
$\ggrad$ & Riemannian gradient operator with respect to the Fisher-Rao metric\\
$R_{W}[\cdot], \Omega[\cdot]$, etc. & square brackets indicate a linear operator which acts in a non-standard way, \\ & e.g.~row-wise to a matrix argument.
\end{tabular}

\section{Information Geometry}
~\label{sec:information-geometry}

\textit{Information geometry}~\cite{Amari:1985vc,Lauritzen:1987aa} is concerned with the representation of parametric probability distributions from a geometric viewpoint like, 
e.g., the exponential familiy of distributions \cite{Brown:1986vy}. 
Specifically, an open convex set $\mc{M}$ of parameters of a probability distribution becomes a Riemannian manifold $(\mc{M},g)$ 
when equipped with a Riemannian metric $g$.
The \textit{Fisher-Rao metric} is the canonical choice due to its invariance properties with respect to reparametrization~\cite{Cencov:1981aa}. 
A closely related scenario concerns the representation of the interior of compact convex bodies as Riemannian manifolds $(\mc{M},g)$ 
due to the correspondence between compactly supported Borel probability measures and an affine equivalence class of convex 
bodies~\cite{Brazitikos:2014aa}.

A key ingredient of information geometry is 
the so-called \textit{$\alpha$-family of affine connections} introduced by Amari~\cite{Amari:1985vc}, which comprises the so-called $e$-connection 
$\nabla$ and $m$-connection $\nabla^{\ast}$ as special cases. These connections are torsion-free and dual to each other 
in the sense that they jointly satisfy the equation which uniquely characterizes the Levi-Civita connection as metric 
connection~\cite[Def.~3.1, Thm.~3.1]{Amari:1985vc}. Regarding numerical computations, working with the exponential map induced 
by the $e$-connection is particularly convenient since its domain is the entire tangent space. We refer 
to~\cite{Amari:2000aa,Calin:2014aa,Ay:2017aa} for further reading and to~\cite{Petz:1994aa},~\cite[Ch.~7]{Amari:2000aa} 
for the specific case of quantum state spaces.

In this paper, we are concerned with two classes of convex sets, 
\begin{itemize}
\item
the relative interior of probability simplices, 
each of which represents the categorical (discrete) distributions of the corresponding dimension, and 
\item
the set of 
positive-definite symmetric matrices with trace one.
\end{itemize}
Sections~\ref{sec:intro-simplex} and~\ref{sec:intro-density-matrices} 
introduce the information geometry for the former and the latter class of sets, respectively.

\subsection{Categorical Distributions}\label{sec:intro-simplex}
We set
\begin{equation}
[c]:=\{1,2,\dotsc,c\},\qquad c\in\N.
\end{equation}
and denote the probability simplex of distributions on $[c]$ by
\begin{equation}\label{eq:def-Delta-c}
\Delta_{c}:=\Big\{p\in\R_{+}^{c}\colon \la\eins_{c},p\ra=\sum_{i\in[c]}p_{i}=1\Big\},\qquad
\eins_{c} := (1,1,\dotsc,1)^{\T}\in\R^{c}.
\end{equation}
Its relative interior equipped with the Fisher-Rao metric becomes the Riemannian manifold $(\mc{S}_{c},g)$,
\begin{equation}\label{eq:def-mcSc}
\mc{S}_{c} := \rint\Delta_{c}
= \{p\in\Delta_{c}\colon p_{i}>0,\;i\in[c]\},
\end{equation}
\begin{equation}\label{eq:def-gp}
g_{p}(u,v) 
:= \sum_{i\in[c]}\frac{u_{i} v_{i}}{p_{i}}
= \la u, \Diag(p)^{-1} v\ra,\quad
\forall u,v\in T_{c,0},\quad
p\in\mc{S}_{c},
\end{equation}
with trivial tangent bundle given by
\begin{equation}
T\mc{S}_{c} \cong \mc{S}_{c}\times T_{c,0}
\end{equation}
and the tangent space
\begin{equation}\label{eq:def-T0}
T_{c,0}
:= T_{\eins_{\mc{S}_{c}}}\mc{S}_{c}
=\{v\in\R^{c}\colon \la\eins_{c},v\ra=0\}.
\end{equation}
The orthogonal projection onto $T_{c,0}$ is denoted by
\begin{equation}\label{eq:def-Pi0-p}
\pi_{c,0}\colon\R^{c}\to T_{c,0},\qquad
\pi_{c,0} v 
:= v - \frac{1}{c}\la\eins_{c}, v\ra\eins_{c}
= \Big(I_{c}-\eins_{c}\eins_{\mc{S}_{c}}^{\T}\Big) v.
\end{equation}

\vspace{0.2cm}
The mapping defined next plays a major role in all dynamical systems being under consideration in this paper.
\begin{definition}[\textbf{replicator operator}]
The replicator operator is the linear mapping of the tangent space
\begin{equation}\label{eq:def-Rp}
R\colon\mc{S}_{c}\times T_{c,0}\to T_{c,0},\qquad
R_{p}v := (\Diag(p)-p p^{\T}) v,\qquad
p\in\mc{S}_{c},\quad v\in T_{c,0}
\end{equation}
parametrized by $p\in\mc{S}_{c}$.
\end{definition}
The name `replicator' is due to the role of this mapping in evolutionary game theory; see Remark~\ref{rem:replicator-eq} on 
page~\pageref{rem:replicator-eq}.
\begin{proposition}[\textbf{properties of $R_{p}$}]\label{prop:properties-of-Rp}
The mapping \eqref{eq:def-Rp} satisfies 
\begin{subequations}\label{eq:Rp-Pi0}
\begin{align}
R_{p}\eins_{c} &= 0, 
\label{eq:Rp-eins} \\ \label{eq:Rp-Pi0-commute}
\pi_{c,0}R_{p} &= R_{p} \pi_{c,0}
= R_{p},\quad \forall p\in\mc{S}_{c}.
\end{align}
\end{subequations}
Furthermore, let $f\colon\mc{S}_{c}\to\R$ be a smooth function and $\wt{f}\colon U\to\R$ a smooth extension of $f$ to an open neighborhood $U$ of $\mc{S}_{c}\subset \R^{c}$ with $\wt{f}\vert_{\mc{S}_{c}} = f$. Then the Riemannian gradient of $f$ with respect to the Fisher-Rao metric \eqref{eq:def-gp} is given by
\begin{equation}\label{eq:grad-R-simplex}
\ggrad f(p) = R_{p}\partial \wt{f}(p).
\end{equation} 
\end{proposition}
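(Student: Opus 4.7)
The plan is to verify the three claims in order, with the first two being short direct computations that then feed into the standard characterization of the Riemannian gradient used in the third.

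For the identity $R_p\eins_c=0$, I would simply expand the definition: $R_p\eins_c=\Diag(p)\eins_c-p\,p^\T\eins_c=p-p\la p,\eins_c\ra=p-p=0$, using $p\in\mc{S}_c\subset\Delta_c$. This is essentially a one-line check, but it is the key fact behind everything else.

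For the pair of identities in \eqref{eq:Rp-Pi0-commute}, I would argue separately from the right and from the left. To show $\pi_{c,0}R_p=R_p$, I verify that the image of $R_p$ lies in $T_{c,0}$ by computing $\la\eins_c,R_p v\ra=\la p,v\ra-\la\eins_c,p\ra\la p,v\ra=0$; this is the statement that $R_p$ is, as asserted, a map into $T_{c,0}$, so applying the projector $\pi_{c,0}$ changes nothing. To show $R_p\pi_{c,0}=R_p$, I substitute the definition \eqref{eq:def-Pi0-p} of $\pi_{c,0}$ and use the first identity: $R_p\pi_{c,0}v=R_p v-\tfrac{1}{c}\la\eins_c,v\ra R_p\eins_c=R_p v$.

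For the gradient formula \eqref{eq:grad-R-simplex}, the standard strategy is to check that the right-hand side is (i) tangent to $\mc{S}_c$ at $p$, (ii) independent of the chosen smooth extension $\wt f$, and (iii) satisfies the defining equation $g_p(\ggrad f(p),v)=d f_p(v)$ for every $v\in T_{c,0}$. Item (i) is immediate from \eqref{eq:Rp-Pi0-commute}. For (ii), if $\wt f_1,\wt f_2$ are two extensions agreeing on $\mc{S}_c$, then their difference is constant along $\mc{S}_c$, so $\partial(\wt f_1-\wt f_2)(p)$ annihilates every $v\in T_{c,0}$; hence $\partial(\wt f_1-\wt f_2)(p)$ is orthogonal to $T_{c,0}$ and thus a scalar multiple of $\eins_c$, which $R_p$ kills by \eqref{eq:Rp-eins}. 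For (iii), I compute
\begin{equation*}
g_p(R_p\partial\wt f(p),v)=\la R_p\partial\wt f(p),\Diag(p)^{-1}v\ra=\la\partial\wt f(p),v\ra-\la p,\partial\wt f(p)\ra\la\eins_c,v\ra,
\end{equation*}
and the last inner product vanishes since $v\in T_{c,0}$, leaving $\la\partial\wt f(p),v\ra=d f_p(v)$ as required.

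The only subtle step is (ii); the rest is bookkeeping. I would present (ii) explicitly rather than leave it implicit, since it is exactly this point that makes the formula \eqref{eq:grad-R-simplex} well-defined on the manifold and not merely on its ambient embedding.
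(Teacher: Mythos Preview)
Your proof is correct. The verifications of \eqref{eq:Rp-eins} and \eqref{eq:Rp-Pi0-commute} are essentially the same as the paper's (the paper checks $\pi_{c,0}R_p=R_p$ using the symmetry of $R_p$ rather than a direct inner-product computation, but this is cosmetic).

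For the gradient formula \eqref{eq:grad-R-simplex}, however, your route is genuinely different from the paper's. The paper works in the local chart $\vphi(p)=(p_1,\dotsc,p_{c-1})$, writes down the Fisher--Rao metric tensor $G(\ol p)$ explicitly in these coordinates, inverts it via the Sherman--Morrison--Woodbury formula to obtain $G(\ol p)^{-1}=R_{\ol p}$, and then lifts this back to the ambient $\R^c$ using the basis matrix $B$ of $T_{c,0}$. You instead stay entirely in the ambient picture and verify the defining relation $g_p(R_p\partial\wt f(p),v)=\la\partial\wt f(p),v\ra$ directly, together with tangency and extension-independence. Your argument is shorter and more conceptual; the paper's argument buys the explicit identification of $R_{\ol p}$ as the inverse metric tensor in intrinsic coordinates, which is precisely the content of the Remark following the proposition and underpins the later analogy with the density-matrix replicator $\Rep_\rho$. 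Both are valid; yours is cleaner as a proof of the stated proposition, while the paper's computation yields a byproduct it wants to reuse.
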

\begin{proof}
Appendix~\ref{sec:appendix-sec-information-geometry}
\end{proof}
\begin{remark} 
Equations \eqref{eq:grad-R-simplex} and \eqref{eq:Rp-partial-f-local}, respectively, show that the replicator operator $R_{p}$ is the inverse metric tensor with respect to the Fisher-Rao metric \eqref{eq:def-gp},  expressed in the ambient coordinates.
\end{remark}

\vspace{0.2cm}
The exponential map induced by the $e$-connection is defined on the entire space $T_{c,0}$ and reads \cite{Ay:2017aa}
\begin{equation}
\Exp\colon\mc{S}_{c}\times T_{c,0}\to\mc{S}_{c},\qquad
\Exp_{p}(v) := \frac{p\cdot e^{\frac{v}{p}}}{\la p,e^{\frac{v}{p}}\ra},\qquad
p\in\mc{S}_{c},\quad v\in T_{c,0}.
\end{equation}

\subsection{Density Matrices}\label{sec:intro-density-matrices}
We denote the open convex cone of positive definite matrices by
\begin{equation}\label{eq:def-mcPc}
\mc{P}_{c} := \{\rho\in\C^{c\times c}\colon \rho=\rho^{*},\;\rho\succ 0\}
\end{equation}
and the manifold of strictly positive definite density matrices by
\begin{equation}\label{eq:def-mcDc}
\mc{D}_{c} := \{\rho\in\mc{P}_{c}\colon \tr\rho=1\}.
\end{equation}
$\mc{D}_{c}$ is the intersection of $\mc{P}_{c}$ and the hyperplane defined by the trace-one constraint. Its closure $\ol{\mc{D}}_{c}$ is convex and compact. We can identify the space $\mc{D}_{c}$ as the space of invertible density operators, in the sense of quantum mechanics, on the finite-dimensional Hilbert space $\C^{c}$ without loss of generality.
Any matrix ensemble of the form
\begin{equation}\label{eq:POVM}
\{M_{i}\}_{i\in[n]}\subset\ol{\mc{P}}_{c}\colon\quad
\sum_{i\in[n]}M_{i}=I_{c}
\end{equation}
induces the probability distribution on $[n]$ via the Born rule
\begin{equation}
p\in\Delta_{n}\colon\quad
p_{i} = \la M_{i},\rho \ra = \tr(M_{i}\rho),\quad i\in[n].
\end{equation}
\eqref{eq:POVM} is called \textit{positive operator valued measure (POVM)}. We refer to~\cite{Bengtsson:2017aa} for the physical background and to~\cite{Bodmann:2020wx} and references therein for the mathematical background.

The analog of~\eqref{eq:def-T0} is the tangent space which, at any point $\rho \in \mc{D}_c$, is equal to the space of trace-less symmetric matrices
\begin{subequations}
\begin{align}\label{eq:def-mcH-0-c}
\mc{H}_{c,0}
&:= \mc{H}_{c}\cap \{X\in\C^{c\times c}\colon \tr X=0\},
\intertext{where}\label{eq:def-Hc}
\mc{H}_{c} &:= \{X\in\C^{c\times c}\colon X^{*}=X\}.
\end{align}
\end{subequations}
The manifold $\mc{D}_c$ therefore has a trivial tangent bundle given by
\begin{equation}
    T\mc{D}_{c} = \mc{D}_{c}\times \mc{H}_{c,0},
\end{equation}
with the tangent space $\mc{H}_{c,0}=T_{\eins_{\mc{D}_{c}}}\mc{D}_{c}$ defined in equation~\eqref{eq:def-mcH-0-c}.
The corresponding orthogonal projection onto the tangent space $\mc{H}_{c,0}$ reads
\begin{equation}\label{eq:def-Pic0}
\Pi_{c,0}\colon\mc{H}_{c}\to\mc{H}_{c,0},\qquad
\Pi_{c,0}[X] := X-\frac{\tr{X}}{c} I_{c}.
\end{equation}
Equipping the manifold $\mc{D}_c$ as defined in equation~\eqref{eq:def-mcDc} with the \textit{Bogoliubov-Kubo-Mori (BKM) metric}~\cite{petz1993bogoliubov}
results in a Riemannian manifold $(\mc{D}_{c},g)$. 
Using $T_\rho \mc{D}_c = \mc{H}_{c,0}$, 
this metric can be expressed by
\begin{equation}\label{eq:Bogoliubov}
g_{\rho}(X,Y) := \int_{0}^{\infty}\tr\big(X (\rho+\lambda I)^{-1} Y (\rho+\lambda I)^{-1}\big)d\lambda,\quad
X, Y\in\mc{H}_{c,0},\quad
\rho\in\mc{D}_{c}.
\end{equation}
This metric uniquely ensures the existence of a symmetric e-connection $\nabla$ on $\mc{D}_c$ 
that it mutually dual to its m-connection $\nabla^{\ast}$ in the sense of information geometry, 
leading to the \textit{dually-flat} structure $(g,\nabla,\nabla^{\ast})$~\cite{Grasselli:2001aa},~\cite[Thm.~7.1]{Amari:2000aa}.

The following map and its inverse, defined in terms of the matrix exponential $\expm$ and its inverse $\logm=\expm^{-1}$, will be convenient.
\begin{subequations}
\begin{align}
\mb{T}\colon\mc{D}_{c}\times\mc{H}_{c}&\to\mc{H}_{c},
\\ 
\mb{T}_{\rho}[X] &:= \frac{d}{dt}\logm(\rho+t X)\big|_{t=0}
= \int_{0}^{\infty}(\rho+\lambda I)^{-1} X (\rho+\lambda I)^{-1}d\lambda, 
\label{eq:def-mcT} \\ \label{eq:def-mcT-inverse}
\mb{T}_{\rho}^{-1}[X]
&= \frac{d}{dt}\expm(H+t X)\big|_{t=0}
= \int_{0}^{1}\rho^{1-\lambda} X\rho^{\lambda}d\lambda,\qquad
\rho = \exp_{m}(H).
\end{align}
\end{subequations}
The inner product \eqref{eq:Bogoliubov} may now be written in the form
\begin{equation}\label{eq:def-inner-rho-T}
g_{\rho}(X, Y) = \la\mb{T}_{\rho}[X],Y\ra,
\end{equation}
since the trace is invariant with respect to cyclic permutations of a matrix product as argument. Likewise, 
\begin{equation}\label{eq:ToDo-tr}
\la\rho,X\ra = \tr(\rho X)
= \tr\mb{T}_{\rho}^{-1}[X].
\end{equation}
We consider also two subspaces on the tangent space $T_\rho \mc{D}_c$, 
\begin{subequations}\label{eq:decomp_H_c0}
\begin{align}
T_\rho^{u} \mc{D}_{c}
&:= \left\{ X \in \mc{H}_{c,0} \colon \exists \Omega = - \Omega^* \text{ such that } X = [\Omega,\rho] \right\},
\\ \label{eq:decomp_H_c0-b}
T_{\rho}^{c} \mc{D}_{c} 
&:= \left\{ X \in \mc{H}_{c,0} \colon \ [\rho,X ]=0 \right\},
\end{align}
\end{subequations}
which yield the decomposition \cite{Amari:2000aa} 
\begin{equation}
    T_\rho \mc{D}_c =  T_\rho^{c} \mc{D}_c \oplus  T_\rho^{u} \mc{D}_c.
\end{equation}
In Section~\ref{sec:recovering-af-cat-distr}, we will use this decomposition to recover the assignment flow for categorical distributions from 
the quantum state assignment flow, by restriction to a submanifold of commuting matrices.


\subsection{Alternative Metrics and Geometries}\label{sec:other_geometries}

The positive definite matrix manifold $\mc{P}_c$\footnote{We confine ourselves in this subsection to the case of of real density matrices, as our main references for comparison only deal with real matrix manifolds.} has become a tool for data modelling and analysis during the last two decades. Accordingly, a range of Riemannian metrics exist with varying properties. A major subclass is formed by the $O(n)$-invariant metrics, 
including the log-Euclidean, affine-invariant, Bures-Wasserstein and Bogoliubov-Kubo-Mori (BKM) metric. We refer to \cite{thanwerdas2023n} for a comprehensive recent survey.

This section provides a brief comparison of the \textit{BKM metric} \eqref{eq:Bogoliubov}, adopted in this paper, with two often employed metrics in the literature, the \textit{affine-invariant metric} and the \textit{log-Euclidean metric}, which may be regarded as `antipodal points' in the space of metrics from the geometric and the computational viewpoint, respectively. 

\subsubsection{Affine-Invariant Metrics}

The affine-invariant metric has been derived in various ways, e.g.~based on the canonical matrix inner product on the tangent space  \cite[Section 6]{Bhatia:2006aa} or as Fisher-Rao metric on the statistical manifold of centered multivariate Gaussian densities \cite{skovgaard1984riemannian}. 
The metric is given by 
\begin{equation}\label{eq:affine-invariant-metric}
g_{\rho}(X,Y) = \mathrm{tr} \big( \rho^{-\frac{1}{2}}X\rho^{-\frac{1}{2}} \rho^{-\frac{1}{2}}Y\rho^{-\frac{1}{2}} \big) = \mathrm{tr} \left( \rho^{-1}X\rho^{-1}Y \right),\qquad \rho \in \mc{P}_c,\quad X,Y \in T_{\rho}\mc{P}_c.
\end{equation}
The exponential map with respect to the Levi-Civita connection reads
\begin{equation}\label{eq:exp_affine_inv}
\exp_\rho^{(\text{aff})}(X) = \rho^{\frac{1}{2}} \expm \big(\rho^{-\frac{1}{2}}X\rho^{-\frac{1}{2}}\big)\rho^{\frac{1}{2}},\qquad \rho \in \mc{P}_c,\quad X \in T_{\rho}\mc{P}_c.
\end{equation}
This Riemannian structure turns $\mc{P}_{c}$ into a manifold with negative sectional curvature \cite[Ch.~II.10]{Bridson:1999aa}, which is convenient from the geometric viewpoint due to uniquely defined Riemannian means and geodesic convexity \cite[Section 6.9]{Jost:2017aa}. On the other hand, evaluating \eqref{eq:affine-invariant-metric} and \eqref{eq:exp_affine_inv} is computationally expensive, in particular when computing the quantum state assignment flow which essentially involves geometric averaging.

\subsubsection{Log-Euclidean Metric}

The log-Euclidean metric, introduced by \cite{Arsigny:2007aa}, is the pullback of the canonical matrix inner product with respect to the matrix logarithm and given by
\begin{equation}\label{eq:log_euclid_metric}
g_{\rho}(X,Y) = \mathrm{tr} \left( d \logm(\rho)[X], d\logm(\rho)[Y] \right) 
\overset{\eqref{eq:def-mcT}}{=} \langle \TT_\rho[X],\TT_\rho[Y] \rangle,\qquad\rho \in \mc{P}_c\quad
X,Y \in T_\rho \mc{P}_c.
\end{equation}
The exponential map reads
\begin{equation}\label{eq:exp_log_euclidean}
\exp_\rho^{(\text{log})}(X) = \expm \big(\logm(\rho) + \mathbb{T}_\rho[X]\big),\qquad\rho \in \mc{P}_c\quad
X,Y \in T_\rho \mc{P}_c
\end{equation}
and is much more convenient from the computational viewpoint. 
Endowed with this metric, the space $\mc{P}_{c}$ is isometric to a Euclidean space. The log-Euclidean metric is not curved and merely invariant under orthogonal transforms and dilations \cite{thanwerdas2023n}.

\subsubsection{Comparison to Bogoliubov-Kubo-Mori Metric}

The BKM metric \eqref{eq:Bogoliubov}, \eqref{eq:ToDo-tr}, given by
\begin{equation}
g_{\rho}(X, Y) = \la\mb{T}_{\rho}[X],Y\ra,\qquad\rho \in \mc{P}_c\quad
X,Y \in T_\rho \mc{P}_c,
\end{equation}
looks similar to the log-Euclidean metric \eqref{eq:log_euclid_metric}. Regarding them both as members of the class of \textit{mean kernel metrics} \cite[Def. 4.1]{thanwerdas2023n} enables an intuitive comparison. For real-valued matrices, mean kernel metrics have the form
\begin{equation}
g_{\rho}(X,X) = g_{D}(X',X')
= \sum_{i,j\in[c]}\frac{(X'_{ij})^{2}}{\phi(D_{ii},D_{jj})},\qquad \rho= V D V^{\T}\quad V\in O(n),\quad X = V X' V^{\T},\quad 
\end{equation}
with a diagonal matrix $D = \Diag(D_{11},\dotsc,D_{cc})$ and a bivariate function $\phi(x,y)= a \,m(x,y)^{\theta},\; a>0$ in terms of a symmetric homogeneous mean $m\colon \R_{+}\times \R_{+}\to\R_{+}$. Regarding the log-Euclidean metric, one has $\phi(x,y) = \big(\frac{x-y}{\log x -\log y}\big)^{2}$, whereas for the BKM metric one has $\phi(x,y) = \frac{x-y}{\log x -\log y}$.

Taking also the restriction to density matrices $\mc{D}_{c}\subset\mc{P}_{c}$ into account, one has the relation 
\begin{subequations}
\begin{align}\label{eq:exp-log-e}
\exp_{\rho}^{(\log)}(Y)
&= \Exp_{\rho}^{(e)}(X),\qquad \rho\in\mc{D}_{c},\quad X \in\mc{H}_{c,0},
\\ \label{eq:exp-log-e-Y}
Y &= X - \log\Big(\tr\exp_{m}\big(\logm(\rho) + \mb{T}_{\rho}[X]\big)\Big) \rho,
\end{align}
\end{subequations}
as will be shown below as Remark \ref{rem:similarity-to-AF-2}. Here, the left-hand side of \eqref{eq:exp-log-e} is the exponential map \eqref{eq:exp_log_euclidean} induced by the log-Euclidean metric and $\Exp_{\rho}^{(e)}$ is the exponential map with respect to the affine e-connection of information geometry, as detailed below by Proposition \ref{prop:e-geodesic}. This close relationship of the e-exponential map $\Exp_{\rho}^{(e)}$ to the exponential map of the log-Euclidean metric highlights the computational efficiency of using BKM metric, which we adopt for our approach. This is also motivated by the lack of an explicit formula for the exponential map with respect to the Levi-Civita connection \cite{michor2000curvature}. To date, the sign of the curvature is not known either.

We note that to our best knowledge, the introduction of the affine connections of information geometry, as surrogates of the Riemannian connection for any statistical manifold, predates the introduction of the log-Euclidean metric for the specific space $\mc{P}_{c}$.

\section{Assignment Flows}
\label{sec:AF}
The assignment flow approach has been informally introduced in Section \ref{sec:Introduction}. In this section, we summarize the mathematical ingredients of this approach, as a reference for the subsequent generalization to quantum states (density matrices) in Section \ref{sec:DAF-all}. Sections \ref{sec:single-vertex-AF} and \ref{sec:introduction_AF} introduce the assignment flow on a single vertex and on an arbitrary graph, respectively. A reparametrization turns the latter into a Riemannian gradient flow (Section \ref{sec:standardAF_S-flow}). 
Throughout this section, we refer to definitions and notions  introduced in Section \ref{sec:intro-simplex}.

\subsection{Single-Vertex Assignment Flow}\label{sec:single-vertex-AF}
Let $D=(D_{1},\dotsc,D_{c})^{\T}\in\R^{c}$ and consider the task to pick the smallest components of $D$. Formulating this operation as optimization problem amounts to evaluating the support function (in the sense of convex analysis \cite[p.~28]{Rockafellar:1970ab}) of the probability simplex $\Delta_{c}$ at $-D$,
\begin{equation}\label{eq:data-term}
\min_{j\in[c]} \{D_{1},\dotsc,D_{c}\}
= \max_{p\in\Delta_{c}}\la -D, p\ra.
\end{equation}
In practice, the vector $D$ represents real-valued noisy measurements at some vertex $i\in\mc{V}$ of an underlying graph $\mc{G}=(\mc{V},\mc{E})$ and hence will be in `general position', that is the minimal component will be unique: if $j^{\ast}\in[c]$ indexes the minimal component $D_{j^{\ast}}$, then the corresponding unit vector $p^{\ast}=e_{j^{\ast}}$ will maximize the right-hand side of \eqref{eq:data-term}. We call \textit{assignment vectors} such vectors which assign a label (index) to observed data vectors.

If $D$ varies, the operation \eqref{eq:data-term} is non-smooth, however. In view of a desired interaction of label assignments across the graph (cf.~Section \ref{sec:introduction_AF}), we therefore replace this operation by a \textit{smooth} dynamical system whose solution converges to the desired assignment vector. To this end, the vector $D$ is represented on $\mc{S}_{c}$ as \textit{likelihood vector}
\begin{equation}\label{eq:def-LpD}
L_{p}(D) := \exp_{p}(-\pi_{c;0} D) 
\overset{\eqref{eq:Rp-Pi0-commute}}{=}
\exp_{p}(-D),\qquad
p\in\mc{S}_{c},
\end{equation}
where
\begin{equation}\label{eq:def_exp_standard_af}
\exp\colon\mc{S}_{c}\times T_{c;0}\to\mc{S}_{c},\qquad
\exp_{p}(v) := \Exp_{p}\circ R_{p}(v)
= \frac{p\cdot e^{v}}{\la p, e^{v}\ra},\qquad
p\in\mc{S}_{c}.
\end{equation}
The \textit{single-vertex assignment flow} equation reads
\begin{equation}\label{eq:single-vertex-AF}
\dot p = R_{p} L_{p}(D)
= p\cdot \big(L_{p}(D)-\la p,L_{p}(D)\ra\eins_{c}\big),\qquad
p(0)=\eins_{\mc{S}_{c}}.
\end{equation}
Its solution $p(t)$ converges to the vector that solves the label assignment problem \eqref{eq:data-term}, see Corollary \ref{prop:single-vertex-AF} below.
\begin{remark}[\textbf{replicator equation}]\label{rem:replicator-eq}
Differential equations of the form \eqref{eq:single-vertex-AF}, with some $\R^{c}$-valued function $F(p)$ in place of $L_{p}(D)$, are known as \textit{replicator equation} in evolutionary game theory \cite{Hofbauer:2003aa}. 
\end{remark}
\begin{lemma}\label{lem:derivatives_exp_standard_af}
Let $p\in\mc{S}_{c}$. Then the differentials of the mapping \eqref{eq:def_exp_standard_af} with respect to $p$ and $v$ are given by
\begin{subequations}
\begin{align}
d_{v} \exp_{p}(v)[u]
&= R_{\exp_{p}(v)} u,
\label{eq:dexp-p} \\ \label{eq:dLpD}
d_{p} \exp_{p}(v)[u]
&= R_{\exp_{p}(v)}\frac{u}{p},\qquad p\in\mc{S}_{c},\quad u, v\in T_{c;0}.
\end{align}
\end{subequations}
\end{lemma}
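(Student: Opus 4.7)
The plan is to differentiate the closed-form expression
\[
\exp_{p}(v) = \frac{p\cdot e^{v}}{\la p,e^{v}\ra}
\]
directly, using the quotient rule for each variable separately, and then recognize the resulting expression as the replicator operator applied to an appropriate vector. Denote $q := \exp_{p}(v)\in\mc{S}_{c}$ for brevity throughout.

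First I would prove \eqref{eq:dexp-p}. Differentiating the numerator with respect to $v$ in direction $u\in T_{c,0}$ gives $p\cdot e^{v}\cdot u$, and differentiating the denominator gives $\la p\cdot e^{v},u\ra$. The quotient rule then yields
\[
d_{v}\exp_{p}(v)[u]
= \frac{p\cdot e^{v}\cdot u}{\la p,e^{v}\ra}
 - \frac{p\cdot e^{v}}{\la p,e^{v}\ra}\cdot\frac{\la p\cdot e^{v},u\ra}{\la p,e^{v}\ra}
= q\cdot u - q\,\la q,u\ra,
\]
which is precisely $R_{q}u = (\Diag(q)-qq^{\T})u$, as required. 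This uses nothing beyond the definition \eqref{eq:def-Rp} of the replicator operator.

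Next I would prove \eqref{eq:dLpD} by the analogous calculation with respect to $p$. The directional derivatives of numerator and denominator in direction $u$ are $u\cdot e^{v}$ and $\la u,e^{v}\ra$, respectively, so the quotient rule gives
\[
d_{p}\exp_{p}(v)[u]
= \frac{u\cdot e^{v}}{\la p,e^{v}\ra}
 - q\cdot\frac{\la u,e^{v}\ra}{\la p,e^{v}\ra}.
\]
The remaining step is to recognize this as $R_{q}(u/p)$: multiplying and dividing the first term by $p$ gives $\frac{(p\cdot e^{v})\cdot(u/p)}{\la p,e^{v}\ra} = q\cdot(u/p)$, and similarly $\la u,e^{v}\ra/\la p,e^{v}\ra = \la q, u/p\ra$, so the expression equals $q\cdot(u/p) - q\,\la q, u/p\ra = R_{q}(u/p)$.

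There is no substantive obstacle here; the only point worth flagging is that $u/p$ generally does not lie in $T_{c,0}$, but this is harmless because $R_{q} = R_{q}\pi_{c,0}$ by \eqref{eq:Rp-Pi0-commute}, so $R_{q}$ absorbs the ambient component automatically. Both identities therefore follow from a one-line quotient-rule computation combined with inspection of \eqref{eq:def-Rp}.
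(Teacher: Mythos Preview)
Your proof is correct and follows essentially the same approach as the paper: both differentiate the closed-form quotient $\frac{p\cdot e^{v}}{\la p,e^{v}\ra}$ directly (the paper phrases this via curves $v(t)$ and $p(t)$, you via the quotient rule on numerator and denominator) and then identify the result with the replicator operator. Your extra remark that $u/p\notin T_{c,0}$ is harmless because of \eqref{eq:Rp-Pi0-commute} is a nice clarification the paper leaves implicit.
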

\begin{proof}
Appendix~\ref{sec:appendix_standard_af_proofs}.
\end{proof}
\begin{theorem}[\textbf{single vertex assignment flow}]\label{thm:single-vertex-AF-parametrization}
The single-vertex assignment flow equation \eqref{eq:single-vertex-AF} is equivalent to the system
\begin{subequations}\label{eq:single-vertex-AF-parametrization}
\begin{align}
\label{eq:single-vertex-AF-parametrization-p}
\dot p &= R_{p} q,\qquad p(0) = \eins_{\mc{S}_{c}},
\\ \label{eq:single-vertex-AF-parametrization-b}
\dot q &= R_{q} q,\qquad q(0) = L_{\eins_{\mc{S}_{c}}}(D),
\intertext{with solution given by} \label{eq:single-vertex-AF-parametrization-a}
p(t) &= \exp_{\eins_{\mc{S}_{c}}}\Big(\int_{0}^{t} q(\tau)d\tau\Big).
\end{align}
\end{subequations}
\end{theorem}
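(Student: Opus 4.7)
The plan is to reparametrize by introducing the auxiliary variable $q(t) := L_{p(t)}(D) = \exp_{p(t)}(-D)$, derive an autonomous ODE for $q$, and then recover $p$ by an integral of $q$. Equivalence of the two systems will follow from uniqueness of ODE solutions once I match initial conditions.

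First, I would observe that \eqref{eq:single-vertex-AF-parametrization-p} is immediate from the definition $q = L_p(D)$ combined with the single-vertex equation \eqref{eq:single-vertex-AF}. For \eqref{eq:single-vertex-AF-parametrization-b}, I apply the chain rule to $q(t) = \exp_{p(t)}(-D)$, treating $-D$ as constant. Using Lemma \ref{lem:derivatives_exp_standard_af}, specifically \eqref{eq:dLpD}, gives
\begin{equation*}
\dot q = d_p \exp_p(-D)[\dot p] = R_{\exp_p(-D)}\frac{\dot p}{p} = R_q \frac{\dot p}{p}.
\end{equation*}
Substituting $\dot p = R_p q = p\cdot q - \la p,q\ra p$ yields $\frac{\dot p}{p} = q - \la p,q\ra \eins_c$, and then applying $R_q$ and invoking $R_q\eins_c = 0$ from \eqref{eq:Rp-eins} collapses this to $R_q q$, as desired. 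The initial condition $q(0) = L_{\eins_{\mc{S}_c}}(D)$ follows from $p(0) = \eins_{\mc{S}_c}$.

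For the integral representation \eqref{eq:single-vertex-AF-parametrization-a}, I would define $P(t) := \exp_{\eins_{\mc{S}_c}}\!\bigl(\int_0^t q(\tau)\,d\tau\bigr)$ and show directly that $P$ solves the same Cauchy problem as $p$. Using the explicit formula $\exp_{\eins_{\mc{S}_c}}(v) = e^v/\la \eins_c, e^v\ra$ from \eqref{eq:def_exp_standard_af} and differentiating the quotient with $\dot V = q$, a short computation gives $\dot P = P\cdot q - \la P, q\ra P = R_P q$; together with $P(0) = \eins_{\mc{S}_c}$, uniqueness of ODE solutions forces $P \equiv p$. Conversely, starting from any solution of the system \eqref{eq:single-vertex-AF-parametrization-p}--\eqref{eq:single-vertex-AF-parametrization-b} and defining $p$ by \eqref{eq:single-vertex-AF-parametrization-a}, one checks that $q = L_p(D)$ holds along the flow (again by uniqueness applied to the ODE for $q$), so $\dot p = R_p q = R_p L_p(D)$ recovers \eqref{eq:single-vertex-AF}.

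I expect no real obstacle; everything reduces to the chain rule and the kernel property $R_q \eins_c = 0$. The only subtlety to flag explicitly is that $q(t) \in \mc{S}_c$ lies off the tangent space $T_{c,0}$, so one must rely on \eqref{eq:Rp-Pi0-commute} (equivalently \eqref{eq:Rp-eins}) to legitimize replacing $q$ by its projection wherever $R_{\cdot}$ is applied; this is what makes the cancellation of $\la p,q\ra\eins_c$ in step two and the telescoping in the $\dot P$ computation work cleanly.
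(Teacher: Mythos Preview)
Your proposal is correct and follows essentially the same route as the paper: set $q=L_{p}(D)$, derive $\dot q=R_{q}q$ via \eqref{eq:dLpD} and $R_{q}\eins_{c}=0$, then verify the integral formula by differentiating $\exp_{\eins_{\mc{S}_{c}}}\bigl(\int_{0}^{t}q\bigr)$. The only cosmetic difference is that the paper invokes \eqref{eq:dexp-p} directly for the last step rather than differentiating the quotient by hand, and it is terser about the converse direction you spell out.
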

\begin{proof}
Appendix~\ref{sec:appendix_standard_af_proofs}.
\end{proof}
\begin{corollary}[\textbf{single vertex label assignment}]\label{prop:single-vertex-AF}
Let $\mc{J}^{\ast}:=\arg\min_{j\in[c]}\{D_{j}\colon j\in[c]\}\subseteq [c]$. Then the solution $p(t)$ to \eqref{eq:single-vertex-AF} satisfies
\begin{equation}\label{eq:single-AF-p-limit}
\lim_{t\to\infty}p(t) = \frac{1}{|\mc{J}^{\ast}|}\sum_{j\in J^{\ast}}e_{j}
\in \arg\max_{p\in\Delta_{c}}\la -D, p\ra.
\end{equation}
In particular, if $D$ has a unique minimal component $D_{j^{\ast}}$, then $p(t)\to e_{j^{\ast}}$ as $t\to\infty$.
\end{corollary}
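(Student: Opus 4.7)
My plan is to exploit the reduction given by Theorem \ref{thm:single-vertex-AF-parametrization}: the dynamics of $p$ can be reconstructed from the autonomous flow \eqref{eq:single-vertex-AF-parametrization-b} via \eqref{eq:single-vertex-AF-parametrization-a}, so I first analyze $\dot q = R_q q$ on $\mc{S}_c$ starting from $q(0) = L_{\eins_{\mc{S}_c}}(D) = e^{-D}/\langle\eins_c, e^{-D}\rangle$. Crucially, $\mc{J}^{\ast} = \arg\min_j D_j$ coincides with $\arg\max_j q_j(0)$, so the problem becomes: show that $q(t)$ concentrates uniformly on $\arg\max_j q_j(0)$.

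The central identity is obtained by writing $R_q q$ componentwise: $\dot q_i = q_i(q_i - \langle q,q\rangle)$, from which
\begin{equation}
\frac{d}{dt}\log\frac{q_i(t)}{q_j(t)} = q_i(t) - q_j(t).
\end{equation}
Two structural consequences follow. First, for any $j^{\ast} \in \mc{J}^{\ast}$ and $i \notin \mc{J}^{\ast}$, $q_i(0)/q_{j^{\ast}}(0) < 1$; the identity forbids the ratio from reaching $1$ (it is decreasing whenever below $1$), so $q_i(t) < q_{j^{\ast}}(t)$ for all $t \ge 0$. Second, uniqueness of solutions together with the permutation-symmetry of \eqref{eq:single-vertex-AF-parametrization-b} and of $q(0)$ on $\mc{J}^{\ast}$ forces $q_{j^{\ast}}(t) = q_{j^{\ast\ast}}(t)$ for all $j^{\ast}, j^{\ast\ast} \in \mc{J}^{\ast}$ and all $t \ge 0$. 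Combining these, $q_{j^{\ast}}(t) = \max_k q_k(t) \ge 1/c$ uniformly in $t$.

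To upgrade the monotone ratios to convergence to zero, set $\phi_i(t) := q_i(t)/q_{j^{\ast}}(t)$ for $i \notin \mc{J}^{\ast}$. Then $\dot\phi_i = \phi_i q_{j^{\ast}}(\phi_i - 1) \le (\phi_i(0) - 1)\, \phi_i\, q_{j^{\ast}}$ since $\phi_i$ is decreasing. Integrating and inserting $q_{j^{\ast}} \ge 1/c$ yields $\log\phi_i(t) \to -\infty$, hence $\phi_i(t) \to 0$. Combined with $\sum_k q_k(t) = 1$ and the equality of the $\mc{J}^{\ast}$-components, this forces $q(t) \to |\mc{J}^{\ast}|^{-1} \sum_{j \in \mc{J}^{\ast}} e_j$.

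Finally, I pass from $q$ to $p$. By \eqref{eq:single-vertex-AF-parametrization-a} and the closed form of $\Exp_{\eins_{\mc{S}_c}}$, one has $p_k(t) = e^{V_k(t)}/\sum_l e^{V_l(t)}$ with $V(t) := \int_0^t q(\tau) d\tau$. Permutation symmetry gives $V_l(t) = V_{l'}(t)$ for $l, l' \in \mc{J}^{\ast}$, while for $l \in \mc{J}^{\ast}$, $k \notin \mc{J}^{\ast}$ the integrand $q_l(\tau) - q_k(\tau)$ tends to $1/|\mc{J}^{\ast}| > 0$, so $V_l(t) - V_k(t) \to +\infty$. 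The softmax therefore concentrates uniformly on $\mc{J}^{\ast}$, establishing \eqref{eq:single-AF-p-limit}; the identification with $\arg\max_{p\in\Delta_c}\langle -D, p\rangle$ is the elementary observation that this $\arg\max$ is $\conv\{e_j : j \in \mc{J}^{\ast}\}$, which contains the uniform vector. The main technical obstacle is the uniform lower bound $q_{j^{\ast}}(t) \ge 1/c$, which in turn rests on preservation of the initial ordering; everything else is integration and softmax bookkeeping.
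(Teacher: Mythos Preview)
Your proof is correct and in fact more complete than the paper's. Both approaches start from Theorem~\ref{thm:single-vertex-AF-parametrization} and analyze the autonomous equation $\dot q = R_q q$, but the arguments diverge from there. The paper observes that $R_q q = \ggrad_q\big(\tfrac12\|q\|^2\big)$, so \eqref{eq:single-vertex-AF-parametrization-b} is a Riemannian ascent flow; it then classifies the stationary points $Q^\ast=\{|\mc{J}|^{-1}\sum_{j\in\mc{J}}e_j : \mc{J}\subseteq[c]\}$ and asserts convergence of $q(t)$ (and equality of $\lim p(t)$ with $\lim q(t)$) without further detail. You instead work directly with the log-ratio identity $\tfrac{d}{dt}\log(q_i/q_j)=q_i-q_j$, use order preservation and permutation symmetry to obtain the uniform bound $q_{j^\ast}\ge 1/c$, and integrate the resulting differential inequality to force $q_i/q_{j^\ast}\to 0$ for $i\notin\mc{J}^\ast$. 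Your explicit treatment of the $q\to p$ passage via $V_l-V_k\to+\infty$ and the softmax representation closes a step the paper only states. The gradient-flow viewpoint is conceptually cleaner and immediately delivers the set of equilibria, but your ODE analysis is self-contained and makes the convergence and the identification of the limiting $\mc{J}^\ast$ fully explicit.
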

\begin{proof}
Appendix~\ref{sec:appendix_standard_af_proofs}.
\end{proof}
\subsection{Assignment Flows}
\label{sec:introduction_AF}
The assignment flow approach consists of the weighted interaction -- as define below -- of single-vertex assignment flows, associated with vertices $i\in\mc{V}$ of a weighted graph $\mc{G}=(\mc{V},\mc{E},\w)$ with nonnegative weight function 
\begin{equation}\label{eq:def-omega}
\w\colon\mc{E}\to\R_{+},\qquad
ik\mapsto\w_{ik}.
\end{equation}
The assignment vectors are denoted by $W_{i},\,i\in\mc{V}$ and form the row vectors of a row-stochastic matrix
\begin{equation}\label{eq:def-mcW}
W \in\mc{W}_{c} := \underbrace{\mc{S}_{c}\times\dotsb\times\mc{S}_{c}}_{|\mc{V}|\;\text{factors}}.
\end{equation}
The product space $\mc{W}_{c}$ is called \textit{assignment manifold} $(\mc{W}_{c},g)$, where the metric $g$ is defined by applying \eqref{eq:def-gp} row-wise, 
\begin{equation}\label{eq:def-g-W}
g_{W}(U,V) := \sum_{i\in\mc{V}}g_{W_{i}}(U_{i},V_{i}),\qquad
U, V\in\mc{T}_{c;0}:=T_{c;0}\times\dotsb\times T_{c;0}.
\end{equation}
The \textit{assignment flow equation} generalizing \eqref{eq:single-vertex-AF} reads
\begin{equation}\label{eq:AF}
\dot W = R_{W}[S(W)],
\end{equation}
where the \textit{similarity vectors}
\begin{equation}\label{eq:def-Si}
S_{i}(W) := \Exp_{W_{i}}\Big(\sum_{k\in\mc{N}_{i}}\w_{ik} \Exp_{W_{i}}^{-1}\big(L_{W_{k}}(D_{k})\big)\Big),\qquad i\in\mc{V}
\end{equation}
form the row vectors of the matrix $S(W)\in\mc{W}_{c}$. The neigborhoods
\begin{equation}\label{eq:def-mcN-i}
\mc{N}_{i}
:=\{i\}\cup\{k\in\mc{V}\colon ik \in\mc{E}\}
\end{equation}
are defined by the adjacency relation of the underlying graph $\mc{G}$, and $R_{W}[\cdot]$ of \eqref{eq:AF} applies \eqref{eq:def-Rp} row-wise, 
\begin{equation}\label{eq:def-replicator-mcW}
R_{W}[S(W)]_{i} = R_{W_{i}} S_{i}(W),\qquad i\in\mc{V}.
\end{equation}
Note that the similarity vectors $S_{i}(W)$ given by \eqref{eq:def-Si} result from geometric weighted averaging of the velocity vectors $\Exp_{W_{i}}^{-1}\big(L_{W_{k}}(D_{k})\big)$. The velocities represent given data $D_{i},\; i\in\mc{V}$ via the likelihood vectors $L_{W_{i}}(D_{i})$ given by \eqref{eq:def-LpD}. Each choice of the weights $\w_{ik}$ in \eqref{eq:def-Si} associated with every edge $ik\in\mc{E}$ defines an assignment flow $W(t)$ solving \eqref{eq:AF}. Thus these weight parameters determine how individual label assignments by \eqref{eq:def-LpD} and \eqref{eq:single-vertex-AF} are \textit{regularized}. 

Well-posedness, stability and quantitative estimates of basins of attraction to integral label assignment vectors have been established in \cite{zern2021assignment}. Reliable and efficient algorithms for computing numerically the assignment flow have been devised by \cite{Zeilmann:2020aa}.

\subsection{Reparametrized Assignment Flows}
\label{sec:standardAF_S-flow}
In \cite[Prop.~3.6]{savarino2021continuous}, the following parametrization of the general assignment flow equation \eqref{eq:AF} was introduced, which generalizes the parametrization \eqref{eq:single-vertex-AF-parametrization} of the single-vertex assignment flow \eqref{eq:single-vertex-AF}.
\begin{subequations}\label{eq:S-flow}
\begin{align}
\dot W &= R_{W}[\ol{S}],\qquad 
W(0)=\eins_{\mc{W}_{c}},
\label{eq:S-flow-a} \\ \label{eq:S-flow-b}
\dot{\ol{S}} &= R_{\ol{S}}[\Omega\ol{S}],\qquad
\ol{S}(0) = S(\eins_{\mc{W}_{c}}),
\end{align}
\end{subequations}
with the nonnegative weight matrix corresponding to the weight function \eqref{eq:def-omega},
\begin{equation}\label{eq:def-Omega}
\Omega = (\Omega_{1},\dotsc,\Omega_{|\mc{V}|})^{\T} \in\R^{|\mc{V}|\times |\mc{V}|},
\qquad\qquad
\Omega_{ik} := \begin{cases}
\w_{ik}, &\text{if}\; k\in\mc{N}_{i}, \\
0, &\text{otherwise.}
\end{cases}
\end{equation}
This formulation reveals in terms of \eqref{eq:S-flow-b} the `essential' part of the assignment flow equation, since \eqref{eq:S-flow-a} depends on \eqref{eq:S-flow-b}, but not vice versa. Furthermore, the data and weights show up only in the initial point and in the vector field on the right-hand side of \eqref{eq:S-flow-b}, respectively.

Henceforth, we solely focus on \eqref{eq:S-flow-b} rewritten for convenience as
\begin{equation}\label{eq:S-AF}
\dot S = R_{S}[\Omega S],\qquad S(0)=S_{0},
\end{equation}
where $S_{0}$ comprises the similarity vectors \eqref{eq:def-Si} evaluated at the barycenter $W=\eins_{\mc{W}_{c}}$.

\section{Quantum State Assignment Flows}\label{sec:DAF-all}

In this section, we generalize the assignment flow equations \eqref{eq:AF} and \eqref{eq:S-AF} to the product manifold $\mc{Q}_{c}$ of density matrices as state space. The resulting equations have a similar mathematical form. 
Their derivation requires 
\begin{itemize}
\item to determine the form of the Riemannian gradient of functions $f\colon\mc{D}_{c}\to\R$ with respect to the BKM-metric \eqref{eq:Bogoliubov}, the corresponding replicator operator and exponential mappings $\Exp$ and $\exp$ together with their differentials (Section \ref{sec:grad-R-density}), 
\item to define the single-vertex quantum state assignment flow (Section \ref{sec:single-vertex-DAF}),
\item to devise the general quantum state assignment flow equation for an arbitrary graph (Section \ref{sec:DAF})
\item and its alternative parametrization (Section \ref{sec:S-DAF}) which generalizes formulation \eqref{eq:S-AF} of the assignment flow accordingly.

\end{itemize}
A natural question is: What does `label' mean for a generalized assignment flow evolving on the product manifold $\mc{Q}_{c}$ of density matrices? For the single vertex quantum state assignment flow, i.e.~without interaction of these flows on a graph, it turns out that the pure state corresponding to the minimal eigenvalue of the initial density matrix is assigned to the given data point (Proposition \ref{prop:single-vertex-matrix-flow}). Coupling non-commuting density matrices over the graph through the novel quantum state assignment flow, therefore, generates an interesting complex dynamics as we illustrate in Section \ref{sec:Experiments}. It is shown in Section \ref{sec:recovering-af-cat-distr} that the restriction of the novel quantum state assignment flow to commuting density matrices recovers the original assignment flow for discrete labels. 

Throughout this section, we refer to definitions and notions  introduced in Section \ref{sec:intro-density-matrices}.

\subsection{Riemannian Gradient, Replicator Operator and Further Mappings}\label{sec:grad-R-density}

%
\begin{proposition}[\textbf{Riemannian gradient}]\label{prop:grad-Dc}
	Let $f\colon\mc{D}_{c}\to\R$ be a smooth function defined on the manifold \eqref{eq:def-mcDc}.
	and $\wt{f}\colon U\to\R$ a smooth extension of $f$ to an open neighborhood $U$ of $\mc{D}_{c}\subset \C^{c\times c}$ with $\wt{f}\vert_{\mc{D}_{c}} = f$.
	Then its Riemannian gradient with respect to the BKM-metric \eqref{eq:Bogoliubov} is given by
	\begin{equation}\label{eq:grad-Dc}
	\ggrad_{\rho}f = \mb{T}_{\rho}^{-1}[\partial \wt{f}] - \la \rho, \partial \wt{f} \ra\rho,
	\end{equation}
	where $\mb{T}_{\rho}^{-1}$ is given by \eqref{eq:def-mcT-inverse} and $\partial\wt{f}$ is the ordinary gradient with respect to the Euclidean structure of the ambient space $\textcolor{blue}{\C}^{c\times c}$.
	\end{proposition}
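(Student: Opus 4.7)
The plan is to characterize $\ggrad_\rho f$ through the defining relation
\begin{equation*}
g_\rho(\ggrad_\rho f, Y) \;=\; df(\rho)[Y] \qquad\text{for all } Y\in\mc{H}_{c,0},
\end{equation*}
and then invert the representation of $g_\rho$ in terms of $\mb{T}_\rho$ given by \eqref{eq:def-inner-rho-T}. The first step is to compute the differential using the ambient extension: by the chain rule, $df(\rho)[Y]=\la\partial\wt{f}(\rho),Y\ra$, and since $Y\in\mc{H}_{c,0}$ is traceless, this value is independent of the chosen extension (any shift of $\partial\wt{f}$ by a multiple of $I_c$ pairs to zero with $Y$). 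Combined with \eqref{eq:def-inner-rho-T}, the gradient equation becomes
\begin{equation*}
\la \mb{T}_\rho[\ggrad_\rho f] - \partial\wt{f},\, Y\ra \;=\; 0 \qquad\text{for all } Y\in\mc{H}_{c,0}.
\end{equation*}

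The second step is to exploit that the orthogonal complement of $\mc{H}_{c,0}$ inside $\mc{H}_c$ (with respect to $\la\cdot,\cdot\ra=\tr(\cdot\,\cdot)$) is exactly $\R\cdot I_c$. Hence there exists $\alpha\in\R$ with $\mb{T}_\rho[\ggrad_\rho f]=\partial\wt{f}+\alpha I_c$, so after applying $\mb{T}_\rho^{-1}$,
\begin{equation*}
\ggrad_\rho f \;=\; \mb{T}_\rho^{-1}[\partial\wt{f}] \;+\; \alpha\,\mb{T}_\rho^{-1}[I_c].
\end{equation*}
A direct calculation using the integral formula \eqref{eq:def-mcT-inverse} gives
\begin{equation*}
\mb{T}_\rho^{-1}[I_c] \;=\; \int_0^1 \rho^{1-\lambda}\,\rho^{\lambda}\,d\lambda \;=\; \rho.
\end{equation*}

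The third step is to fix the multiplier $\alpha$ from the tangency condition $\ggrad_\rho f\in\mc{H}_{c,0}$, that is, $\tr\ggrad_\rho f=0$. Taking the trace of the previous display and using relation \eqref{eq:ToDo-tr}, namely $\tr\mb{T}_\rho^{-1}[X]=\la\rho,X\ra$, yields $\la\rho,\partial\wt{f}\ra+\alpha=0$, so $\alpha=-\la\rho,\partial\wt{f}\ra$. Substituting back produces exactly \eqref{eq:grad-Dc}.

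The only real subtlety is the extension-independence and the identification of the orthogonal complement of $\mc{H}_{c,0}$, which is standard; everything else is a direct manipulation of the two integral formulas \eqref{eq:def-mcT}--\eqref{eq:def-mcT-inverse} together with the cyclic invariance of the trace. The main obstacle, if any, is simply bookkeeping: making sure that $\mb{T}_\rho^{-1}$ is well-defined on $\mc{H}_c$ (not just on $\mc{H}_{c,0}$), which is clear from \eqref{eq:def-mcT-inverse} since the integrand is defined for any Hermitian argument, and checking that $\partial\wt{f}$ can be taken Hermitian without loss of generality (otherwise replace it by its Hermitian part, which does not change the pairing with elements of $\mc{H}_{c,0}$).
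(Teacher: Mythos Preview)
Your proof is correct and follows essentially the same approach as the paper: both start from the defining relation $g_\rho(\ggrad_\rho f,Y)=\la\partial\wt f,Y\ra$ for $Y\in\mc{H}_{c,0}$, identify the resulting freedom as a multiple of $I_c$ (you via the orthogonal complement of $\mc{H}_{c,0}$, the paper via the parametrization $X=Y-\tr(Y)I$), apply $\mb{T}_\rho^{-1}$ with $\mb{T}_\rho^{-1}[I_c]=\rho$, and then fix the scalar using $\tr\ggrad_\rho f=0$ together with \eqref{eq:ToDo-tr}. Your explicit remark on extension-independence and on taking the Hermitian part of $\partial\wt f$ is a welcome clarification that the paper leaves implicit.
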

	\begin{proof}
		Appendix \ref{sec:appendix-quantum-state-assignment-flow}.
	\end{proof}

	\vspace{0.25cm}
	Comparing the result \eqref{eq:grad-Dc} with \eqref{eq:grad-R-simplex} motivates the following
	\begin{equation}\label{eq:def-Rrho}
	\Rep_{\rho}\colon\mc{H}_{c}\to\mc{H}_{c,0},\qquad
	\Rep_{\rho}[X] := \mb{T}_{\rho}^{-1}[X]-\la \rho,X\ra\rho,\qquad\rho\in\mc{D}_{c}
	\qquad\qquad(\textbf{replicator map})
	\end{equation}
	The following lemma shows that the properties \eqref{eq:Rp-Pi0} extend to \eqref{eq:def-Rrho}.
	\begin{lemma}[\textbf{properties of $\Rep_{\rho}$}]\label{lem:repl-proj-rho-commute}
	Let $\Pi_{c,0}$ denote the orthogonal projection \eqref{eq:def-Pic0}. Then the replicator map \eqref{eq:def-Rrho} satisfies
	\begin{equation}\label{eq:Pic0-Rrho-commute}
	\Pi_{c,0}\circ \Rep_{\rho}
	= \Rep_{\rho}\circ \Pi_{c,0} 
	= \Rep_{\rho},
	\quad\forall\rho\in\mc{D}_{c}.
	\end{equation}
	\end{lemma}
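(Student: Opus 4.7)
The plan is to verify the two equalities $\Pi_{c,0}\circ\Rep_{\rho}=\Rep_{\rho}$ and $\Rep_{\rho}\circ\Pi_{c,0}=\Rep_{\rho}$ separately by direct computation, using only the definitions of the two operators together with the identity \eqref{eq:ToDo-tr} and the constraint $\tr\rho=1$ on $\rho\in\mc{D}_{c}$.

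For the first identity, I would show that $\Rep_{\rho}[X]$ is already traceless, so that applying $\Pi_{c,0}$ does nothing. Taking the trace of the definition \eqref{eq:def-Rrho} gives
\begin{equation*}
\tr\bigl(\Rep_{\rho}[X]\bigr)=\tr\mb{T}_{\rho}^{-1}[X]-\la\rho,X\ra\tr\rho.
\end{equation*}
By \eqref{eq:ToDo-tr} the first term equals $\la\rho,X\ra$, and $\tr\rho=1$ since $\rho\in\mc{D}_{c}$, so the right-hand side vanishes. Consequently $\Rep_{\rho}[X]\in\mc{H}_{c,0}$, which forces $\Pi_{c,0}\Rep_{\rho}[X]=\Rep_{\rho}[X]$.

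For the second identity, I would plug the definition of $\Pi_{c,0}$ into $\Rep_{\rho}$ and check that the trace part of $X$ is annihilated. Explicitly,
\begin{equation*}
\Rep_{\rho}[\Pi_{c,0}X]=\mb{T}_{\rho}^{-1}[X]-\tfrac{\tr X}{c}\,\mb{T}_{\rho}^{-1}[I_{c}]-\la\rho,X\ra\rho+\tfrac{\tr X}{c}\la\rho,I_{c}\ra\rho.
\end{equation*}
The key substep, and really the only nontrivial algebraic point, is the identity $\mb{T}_{\rho}^{-1}[I_{c}]=\rho$; this follows immediately from the integral representation \eqref{eq:def-mcT-inverse} since $\int_{0}^{1}\rho^{1-\lambda}I_{c}\rho^{\lambda}d\lambda=\int_{0}^{1}\rho\,d\lambda=\rho$. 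Combined with $\la\rho,I_{c}\ra=\tr\rho=1$, the two terms proportional to $\tfrac{\tr X}{c}\,\rho$ cancel, leaving exactly $\Rep_{\rho}[X]$.

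I do not expect any real obstacle here; the lemma is a direct analogue of the simplex identity \eqref{eq:Rp-Pi0-commute} and both halves reduce to one-line computations once \eqref{eq:ToDo-tr}, the normalization $\tr\rho=1$, and the observation $\mb{T}_{\rho}^{-1}[I_{c}]=\rho$ are in hand. The only point worth flagging is that this last observation relies on the integral form of $\mb{T}_{\rho}^{-1}$ in \eqref{eq:def-mcT-inverse} (or equivalently on the fact that the derivative of $\expm$ at $H=\logm\rho$ applied to $I_{c}$ yields $\rho$), so I would state it as a brief intermediate step before combining the two calculations into the conclusion \eqref{eq:Pic0-Rrho-commute}.
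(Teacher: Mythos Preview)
Your proposal is correct and essentially identical to the paper's proof: both halves use exactly the same ingredients (\eqref{eq:ToDo-tr}, $\tr\rho=1$, and $\mb{T}_{\rho}^{-1}[I_{c}]=\rho$), and the only cosmetic difference is that the paper packages the second computation as $\Rep_{\rho}[X]-\tfrac{\tr X}{c}\Rep_{\rho}[I_{c}]=\Rep_{\rho}[\Pi_{c,0}X]$ via $\Rep_{\rho}[I_{c}]=0$, whereas you expand the terms directly.
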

\begin{proof}
	Appendix \ref{sec:appendix-quantum-state-assignment-flow}.
\end{proof}
	
	\vspace{0.25cm}
Next, using the tangent space $\mc{H}_{c,0}$, we define a parametrization of the manifold $\mc{D}_{c}$ in terms of the mapping
	\begin{subequations}\label{eq:def-Gamma}
	\begin{align}\label{eq:def-Gamma-a}
	\Gamma\colon\mc{H}_{c,0}\to\mc{D}_{c},\qquad
	\Gamma(X) 
	&:= \frac{\expm(X)}{\tr\expm(X)}
	= \expm\big(X-\psi(X)I\big),\qquad\qquad (\textbf{$\Gamma$-map})
	\intertext{where}\label{eq:def-psi-Gamma}
	\psi(X) &:= \log\big(\tr\expm(X)\big).
	\end{align}
	\end{subequations}
	The following lemma and proposition show that the domain of $\Gamma$ extends to $\R^{c\times c}$.
	\begin{lemma}[\textbf{extension of $\Gamma$}]\label{lem:Gamma-Pi0}
	The extension to $\C^{c\times c}$ of the mapping $\Gamma$ defined by \eqref{eq:def-Gamma} is well-defined and given by
	\begin{equation}\label{eq:Gamma-Pi0}
	\Gamma\colon \C^{c\times c}\to\mc{D}_{c},\qquad
	\Gamma(Z) = \Gamma(\Pi_{c,0}[Z]).
	\end{equation}
	\end{lemma}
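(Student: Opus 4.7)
The plan is to use the decomposition $Z = \Pi_{c,0}[Z] + \tfrac{\tr Z}{c}\,I_c$, which follows immediately from the definition \eqref{eq:def-Pic0} of $\Pi_{c,0}$ (and which extends unambiguously from $\mc{H}_{c}$ to $\C^{c\times c}$ since both the identity component and the traceless component are defined via the trace). The essential observation is that the scalar shift $\tfrac{\tr Z}{c}\,I_{c}$ commutes with every matrix, so the matrix exponential splits multiplicatively:
\begin{equation*}
\expm(Z) \;=\; \expm\!\Big(\Pi_{c,0}[Z] + \tfrac{\tr Z}{c}\,I_{c}\Big) \;=\; e^{\tr Z/c}\,\expm\!\big(\Pi_{c,0}[Z]\big).
\end{equation*}

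The second step is to substitute this factorization into the definition \eqref{eq:def-Gamma-a} of $\Gamma$. The scalar factor $e^{\tr Z/c}$ appears in both the numerator $\expm(Z)$ and the denominator $\tr\expm(Z)$, hence cancels, yielding
\begin{equation*}
\Gamma(Z) \;=\; \frac{e^{\tr Z/c}\,\expm\!\big(\Pi_{c,0}[Z]\big)}{e^{\tr Z/c}\,\tr\expm\!\big(\Pi_{c,0}[Z]\big)} \;=\; \frac{\expm\!\big(\Pi_{c,0}[Z]\big)}{\tr\expm\!\big(\Pi_{c,0}[Z]\big)} \;=\; \Gamma\!\big(\Pi_{c,0}[Z]\big),
\end{equation*}
which is exactly the claimed identity \eqref{eq:Gamma-Pi0}.

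The final step is to verify well-definedness, i.e.~that the right-hand side lies in $\mc{D}_{c}$. For $Z \in \mc{H}_{c}$ the projection $\Pi_{c,0}[Z]$ is Hermitian, so $\expm(\Pi_{c,0}[Z])$ is Hermitian and strictly positive definite; consequently its trace is strictly positive, the quotient is well defined, and by construction it lies in $\mc{D}_{c}$. This shows that the definition \eqref{eq:def-Gamma-a} of $\Gamma$ on the traceless Hermitian tangent space $\mc{H}_{c,0}$ extends consistently to all of $\mc{H}_{c}$ (and, formally, to $\C^{c\times c}$ wherever $\tr\expm(\Pi_{c,0}[Z])\neq 0$) via the same formula, with the value depending only on the traceless part $\Pi_{c,0}[Z]$.

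The argument is essentially a one-line computation, so I do not expect any real obstacle; the only point requiring care is the appeal to $\expm(A+B) = \expm(A)\expm(B)$, which must be justified by the commutativity $[\Pi_{c,0}[Z],\,I_{c}]=0$ rather than by any structural property of $Z$ itself.
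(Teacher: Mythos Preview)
Your proof is correct and follows essentially the same approach as the paper: split off the scalar multiple of the identity using $\Pi_{c,0}$, use commutativity with $I_{c}$ to factor the matrix exponential, and cancel the resulting scalar in the quotient defining $\Gamma$. Your write-up is in fact a bit more careful than the paper's, which omits the explicit well-definedness check and contains a harmless sign slip in the scalar factor.
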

	\begin{proof}
		Appendix \ref{sec:appendix-quantum-state-assignment-flow}.
	\end{proof}

	\begin{proposition}[\textbf{inverse of $\Gamma$}]\label{prop:Gamma-invertible}
	The map $\Gamma$ defined by \eqref{eq:def-Gamma} is bijective with inverse
	\begin{equation}\label{eq:def-Gamma-1}
	\Gamma^{-1}\colon\mc{D}_{c}\to\mc{H}_{c,0},\qquad
	\Gamma^{-1}(\rho) = \Pi_{c,0}[\logm\rho].
	\end{equation}
	\end{proposition}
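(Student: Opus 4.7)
The plan is to verify directly that the explicit candidate $\Gamma^{-1}\colon \mc{D}_{c}\to\mc{H}_{c,0},\ \rho\mapsto \Pi_{c,0}[\logm\rho]$ inverts $\Gamma$ on both sides, exploiting the preceding Lemma \ref{lem:Gamma-Pi0} which already handles the shift-invariance $\Gamma(Z+\alpha I)=\Gamma(Z)$ through the relation $\Gamma(Z)=\Gamma(\Pi_{c,0}[Z])$. First I would check that $\Gamma^{-1}$ is well-defined: for $\rho\in\mc{D}_{c}$ the matrix $\rho$ is Hermitian positive definite, so $\logm\rho\in\mc{H}_{c}$, and applying the orthogonal projection $\Pi_{c,0}$ defined in \eqref{eq:def-Pic0} yields an element of $\mc{H}_{c,0}$. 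Also, $\Pi_{c,0}[I_{c}]=I_{c}-\frac{\tr I_{c}}{c}I_{c}=0$, which will be used repeatedly.

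For the identity $\Gamma^{-1}\circ\Gamma=\mrm{id}_{\mc{H}_{c,0}}$, I would start from the second form of \eqref{eq:def-Gamma-a}, namely $\Gamma(X)=\expm(X-\psi(X)I_{c})$ with $\psi(X)=\log\tr\expm(X)$. Since $X-\psi(X)I_{c}$ is Hermitian, taking the matrix logarithm gives $\logm\Gamma(X)=X-\psi(X)I_{c}$. Applying $\Pi_{c,0}$ and using linearity together with $\Pi_{c,0}[I_{c}]=0$ and the fact that $X\in\mc{H}_{c,0}$ is fixed by $\Pi_{c,0}$, I obtain $\Gamma^{-1}(\Gamma(X))=\Pi_{c,0}[X]-\psi(X)\Pi_{c,0}[I_{c}]=X$.

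For the identity $\Gamma\circ\Gamma^{-1}=\mrm{id}_{\mc{D}_{c}}$, the direct path is
\begin{equation*}
\Gamma(\Gamma^{-1}(\rho)) = \Gamma\bigl(\Pi_{c,0}[\logm\rho]\bigr)
= \Gamma(\logm\rho)
= \frac{\expm(\logm\rho)}{\tr\expm(\logm\rho)}
= \frac{\rho}{\tr\rho} = \rho,
\end{equation*}
where the second equality invokes Lemma \ref{lem:Gamma-Pi0}, the third is the defining formula \eqref{eq:def-Gamma-a} applied to the extended domain, and the final step uses $\tr\rho=1$ for $\rho\in\mc{D}_{c}$. Together, the two identities imply that $\Gamma$ is a bijection with the stated inverse.

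I do not expect any real obstacle: once Lemma \ref{lem:Gamma-Pi0} is in hand, the proof reduces to two short algebraic checks. The only point requiring mild care is that $\logm$ and $\Pi_{c,0}$ do not commute in general, so one must not attempt to push $\Pi_{c,0}$ through $\logm$; instead one uses the scalar-shift identity $\expm(Y+\alpha I_{c})=e^{\alpha}\expm(Y)$ (implicit in Lemma \ref{lem:Gamma-Pi0}) to absorb the trace-component of $\logm\rho$ into the normalization, which is precisely what makes $\Gamma^{-1}$ a right inverse.
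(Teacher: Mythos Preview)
Your proposal is correct and follows the same approach as the paper: verify both compositions $\Gamma^{-1}\circ\Gamma$ and $\Gamma\circ\Gamma^{-1}$ directly. Your argument for $\Gamma\circ\Gamma^{-1}$ is in fact slightly more economical than the paper's, since you invoke Lemma~\ref{lem:Gamma-Pi0} to drop the projection immediately, whereas the paper expands $\psi(\Gamma^{-1}(\rho))$ explicitly before simplifying; the $\Gamma^{-1}\circ\Gamma$ step is essentially identical in both.
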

	\begin{proof}
		Appendix \ref{sec:appendix-quantum-state-assignment-flow}.
	\end{proof}
The following lemma provides the diffentials of the mappings $\Gamma$ and $\Gamma^{-1}$.
\begin{lemma}[\textbf{differentials $d\Gamma$, $d\Gamma^{-1}$}]\label{lem:dGamma-1}
Let $H, X\in\mc{H}_{c,0}$ with $\Gamma(H)=\rho$ and $Y\in T\mc{H}_{c,0}\cong\mc{H}_{c,0}$. Then
\begin{subequations}
\begin{align}\label{eq:dGamma-H}
d\Gamma(H)[Y] 
&= \mb{T}_{\rho}^{-1}\big[Y-\la\rho,Y\ra I\big],\qquad
\rho=\Gamma(H),
\\ \label{eq:dGamma-1}
d\Gamma^{-1}(\rho)[X] 
&= \Pi_{c,0}\circ\mb{T}_{\rho}[X].
\end{align}
\end{subequations}
\end{lemma}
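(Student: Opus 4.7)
The plan is to read off both differentials directly from the defining formulas, using the chain rule together with the two integral representations of $\mb{T}_\rho$ and $\mb{T}_\rho^{-1}$ in \eqref{eq:def-mcT}--\eqref{eq:def-mcT-inverse}. For \eqref{eq:dGamma-H} I would start from the shifted-exponential form $\Gamma(H) = \expm(H - \psi(H) I)$ in \eqref{eq:def-Gamma-a}. Setting $\tilde H := H - \psi(H) I$ so that $\rho = \Gamma(H) = \expm(\tilde H)$, the chain rule gives
\begin{equation*}
d\Gamma(H)[Y] = d\expm(\tilde H)\bigl[Y - d\psi(H)[Y]\, I\bigr],
\end{equation*}
and $d\expm(\tilde H) = \mb{T}_\rho^{-1}$ is precisely the content of the definition \eqref{eq:def-mcT-inverse} of $\mb{T}_\rho^{-1}$ with base point $\rho = \expm(\tilde H)$.

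What remains is to identify $d\psi(H)[Y]$ with the inner product $\la\rho, Y\ra$. Since $\psi(H) = \log\tr\expm(H)$, the chain rule yields $d\psi(H)[Y] = \tr(d\expm(H)[Y])/\tr\expm(H)$; and applying the integral representation of $d\expm(H)[Y]$ together with cyclicity of the trace collapses the $\lambda$-integral to $\tr(Y e^H)$. Dividing by $\tr e^H$ gives $\la Y, e^H / \tr e^H\ra = \la\rho, Y\ra$, and substituting back produces \eqref{eq:dGamma-H}.

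The second identity then follows almost immediately from the closed form $\Gamma^{-1}(\rho) = \Pi_{c,0}[\logm\rho]$ established in Proposition \ref{prop:Gamma-invertible}: since $\Pi_{c,0}$ is linear on $\mc{H}_c$ it commutes with the differential, and $d\logm(\rho)[X] = \mb{T}_\rho[X]$ by \eqref{eq:def-mcT}, which yields \eqref{eq:dGamma-1} at once. There is no serious obstacle in either computation; the only care needed is keeping track of the two roles of $\rho$ (as the base point for $\mb{T}_\rho^{-1}$ and as the factor in the scalar $\la\rho, Y\ra$), both of which reduce to $\expm(\tilde H) = e^H/\tr e^H$. As a final sanity check, one can verify consistency on $Y \in \mc{H}_{c,0}$: applying $d\Gamma^{-1}(\rho)$ to \eqref{eq:dGamma-H} gives $\Pi_{c,0} \circ \mb{T}_\rho \circ \mb{T}_\rho^{-1}[Y - \la\rho, Y\ra I]$, which returns $Y$ since $\mb{T}_\rho$ and $\mb{T}_\rho^{-1}$ are mutually inverse and $\Pi_{c,0}$ annihilates the $\la\rho, Y\ra I$ component while fixing $Y\in\mc{H}_{c,0}$.
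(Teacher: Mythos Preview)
Your proof is correct and follows essentially the same route as the paper: chain rule on $\Gamma(H)=\expm(H-\psi(H)I)$ to reduce to $\mb{T}_\rho^{-1}$, then identifying $d\psi(H)[Y]=\la\rho,Y\ra$ via the trace, and handling $d\Gamma^{-1}$ directly from \eqref{eq:def-Gamma-1} and \eqref{eq:def-mcT}. The only cosmetic difference is that the paper invokes \eqref{eq:ToDo-tr} for the scalar $d\psi(H)[Y]$ whereas you unwind the integral via cyclicity of the trace; these are the same computation.
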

\begin{proof}
	Appendix \ref{sec:appendix-quantum-state-assignment-flow}.
\end{proof}
We finally compute a closed-form expression of the e-geodesic, i.e.~the geodesic resp.~exponential map induced by the e-connection on the manifold $(\mc{D}_{c},g)$.
\begin{proposition}[\textbf{e-geodesics}]\label{prop:e-geodesic}
The e-geodesic emanating at $\rho\in\mc{D}_{c}$ in the direction $X\in\mc{H}_{c,0}$ and the corresponding exponential map are given by
\begin{subequations}\label{eq:e-geodesic}
\begin{align}
\gamma_{\rho,X}^{(e)}(t)
&:= \Exp_{\rho}^{(e)}(t X),\quad t\geq 0
&&(\textbf{e-geodesic})
\label{eq:e-geodesic-a} \\ \label{eq:def-Exp-rho-e}
\Exp_{\rho}^{(e)}(X)
&:= \Gamma\big(\Gamma^{-1}(\rho)+d\Gamma^{-1}(\rho)[X]\big)
&& (\textbf{exponential map})
\\ \label{eq:def-Exp-rho-e-b}
&= \Gamma\big(\Gamma^{-1}(\rho)+\Pi_{c,0}\circ\mb{T}_{\rho}[X]\big).
\end{align}
\end{subequations}
\end{proposition}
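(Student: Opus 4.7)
The plan is to exploit the dually flat structure $(g,\nabla,\nabla^{\ast})$ of $(\mc{D}_{c}, g_{\text{BKM}})$ alluded to after \eqref{eq:Bogoliubov}. Concretely, I would first argue that the chart $\Gamma^{-1}\colon\mc{D}_{c}\to\mc{H}_{c,0}$ furnishes global \emph{e-affine coordinates}. This is the quantum analog of the fact that a full exponential family is e-flat in its natural parameters: every $\rho\in\mc{D}_{c}$ can be written as $\rho=\expm(H-\psi(H)I)$ with $H=\Gamma^{-1}(\rho)\in\mc{H}_{c,0}$ and $\psi$ the log-partition function from \eqref{eq:def-psi-Gamma}, which is exactly the exponential-family parametrization underlying the BKM construction (cf.~\cite[Ch.~7]{Amari:2000aa}). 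In these coordinates the Christoffel symbols of $\nabla$ vanish identically, so the e-geodesic equation reduces to $\ddot H(t)=0$, hence e-geodesics are affine lines $t\mapsto H(0)+t V$ in $\mc{H}_{c,0}$.

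Next I would translate this back to $\mc{D}_{c}$ by applying $\Gamma$. The e-geodesic emanating at $\rho$ with initial velocity $X\in T_{\rho}\mc{D}_{c}=\mc{H}_{c,0}$ must have the form $\gamma(t)=\Gamma(H(0)+tV)$ with $H(0)=\Gamma^{-1}(\rho)$, and $V$ is determined by the initial-velocity condition $\dot\gamma(0)=X$, which via the chain rule gives $V=d\Gamma^{-1}(\rho)[X]$. Evaluating at $t=1$ yields
\[
\Exp_{\rho}^{(e)}(X)=\gamma(1)=\Gamma\bigl(\Gamma^{-1}(\rho)+d\Gamma^{-1}(\rho)[X]\bigr),
\]
which is \eqref{eq:def-Exp-rho-e}. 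The equality \eqref{eq:def-Exp-rho-e-b} then follows by directly substituting the explicit expression $d\Gamma^{-1}(\rho)[X]=\Pi_{c,0}\circ\mb{T}_{\rho}[X]$ from Lemma~\ref{lem:dGamma-1}.

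The main obstacle is the first step, namely rigorously establishing that $\Gamma^{-1}$ indeed provides e-affine coordinates for the BKM e-connection rather than merely being a convenient chart. If I do not wish to cite the general theory of quantum exponential families, I would verify it by hand: compute the BKM metric components in the coordinates $H\in\mc{H}_{c,0}$, recognize $\psi(H)$ as the potential whose Hessian equals the metric tensor, and conclude via the standard dually-flat characterization that the pair $(H,\partial\psi(H))$ furnishes e- and m-affine coordinates with vanishing e-Christoffel symbols. A cleaner alternative, which I would probably adopt to keep the proof short, is a direct verification: define $\gamma(t):=\Gamma(\Gamma^{-1}(\rho)+t\Pi_{c,0}\mb{T}_{\rho}[X])$, differentiate using \eqref{eq:dGamma-H} and the identity $\mb{T}_{\rho}^{-1}\circ\Pi_{c,0}\circ\mb{T}_{\rho}=\mathrm{id}$ on $\mc{H}_{c,0}$ (which gives $\dot\gamma(0)=X$), and then check that $\gamma(t)$ coincides with the curve obtained by exponentiating an e-affine line, thereby establishing both the initial conditions and the geodesic property simultaneously.
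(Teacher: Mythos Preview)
Your approach is correct and more self-contained than the paper's. The paper simply cites \cite{Petz:1994aa} for the fact that e-geodesics between $Q,R\in\mc{D}_c$ take the form $\Gamma(\logm Q + t(\logm R - \logm Q))$, then identifies this with \eqref{eq:def-Exp-rho-e-b} (using Lemma~\ref{lem:Gamma-Pi0} to absorb the projections $\Pi_{c,0}$) and verifies the initial conditions $\gamma(0)=\rho$, $\dot\gamma(0)=X$ directly via the chain-rule identity $d\Gamma\circ d\Gamma^{-1}=\mrm{id}$. Your route---establishing that $\Gamma^{-1}$ furnishes e-affine coordinates via the Hessian structure of the potential $\psi$---derives the same e-flatness fact from scratch rather than importing it; this is more work but makes the argument independent of the external reference.

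One correction to your ``cleaner alternative'': the identity $\mb{T}_\rho^{-1}\circ\Pi_{c,0}\circ\mb{T}_\rho=\mrm{id}$ on $\mc{H}_{c,0}$ is false in general, since $\tr\mb{T}_\rho[X]$ need not vanish for $X\in\mc{H}_{c,0}$ when $\rho$ and $X$ fail to commute. The correct verification of $\dot\gamma(0)=X$ uses the full formula \eqref{eq:dGamma-H}, namely $d\Gamma(H)[Y]=\mb{T}_\rho^{-1}[Y-\langle\rho,Y\rangle I]$ rather than just $\mb{T}_\rho^{-1}[Y]$; the extra term $-\langle\rho,Y\rangle I$ is precisely what makes the composition $d\Gamma(\Gamma^{-1}(\rho))\circ d\Gamma^{-1}(\rho)$ reduce to the identity on $\mc{H}_{c,0}$. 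This is a minor slip and does not affect the soundness of your overall plan.
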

\begin{proof}
	Appendix \ref{sec:appendix-quantum-state-assignment-flow}.
\end{proof}

\begin{corollary}[\textbf{inverse exponential map}]\label{cor:Exp-1}
The inverse of the exponential mapping \eqref{eq:e-geodesic} is given by
\begin{equation}\label{eq:Exp-1}
\big(\Exp_{\rho}^{(e)}\big)^{-1}\colon\mc{D}_{c}\to\mc{H}_{c,0},\qquad
\big(\Exp_{\rho}^{(e)}\big)^{-1}(\mu)
= d\Gamma\big(\Gamma^{-1}(\rho)\big)\big[\Gamma^{-1}(\mu)-\Gamma^{-1}(\rho)\big].
\end{equation}
\end{corollary}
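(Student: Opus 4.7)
The plan is to verify directly from the definition of the exponential map that the proposed expression inverts $\Exp_{\rho}^{(e)}$. Starting from
\[
\mu = \Exp_{\rho}^{(e)}(X) = \Gamma\big(\Gamma^{-1}(\rho)+d\Gamma^{-1}(\rho)[X]\big),
\]
I would apply $\Gamma^{-1}$ (which is well-defined by Proposition \ref{prop:Gamma-invertible}) to both sides to obtain $\Gamma^{-1}(\mu)-\Gamma^{-1}(\rho)=d\Gamma^{-1}(\rho)[X]$. Substituting this into the right-hand side of \eqref{eq:Exp-1} reduces the corollary to showing that $d\Gamma(\Gamma^{-1}(\rho))$ is a left inverse of $d\Gamma^{-1}(\rho)$ as linear maps on $\mc{H}_{c,0}$.

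To establish this, I would compose the two differentials supplied by Lemma \ref{lem:dGamma-1}. For $Y\in\mc{H}_{c,0}$, setting $H=\Gamma^{-1}(\rho)$ and using \eqref{eq:dGamma-H} and \eqref{eq:dGamma-1},
\[
d\Gamma^{-1}(\rho)\big[d\Gamma(H)[Y]\big]
= \Pi_{c,0}\circ\mb{T}_{\rho}\Big[\mb{T}_{\rho}^{-1}\big[Y-\la\rho,Y\ra I\big]\Big]
= \Pi_{c,0}\big[Y-\la\rho,Y\ra I\big].
\]
Now $\tr Y = 0$ because $Y\in\mc{H}_{c,0}$, while $\tr(\la\rho,Y\ra I)=c\la\rho,Y\ra$, so by \eqref{eq:def-Pic0} the projection returns $Y-\la\rho,Y\ra I+\la\rho,Y\ra I = Y$. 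This proves that $d\Gamma(\Gamma^{-1}(\rho))$ is the inverse of $d\Gamma^{-1}(\rho)$ on $\mc{H}_{c,0}$, so
\[
X = d\Gamma\big(\Gamma^{-1}(\rho)\big)\big[d\Gamma^{-1}(\rho)[X]\big] = d\Gamma\big(\Gamma^{-1}(\rho)\big)\big[\Gamma^{-1}(\mu)-\Gamma^{-1}(\rho)\big],
\]
which is the claimed inverse formula.

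The whole argument is essentially a consequence of the inverse function theorem together with the explicit formulas already proved in Lemma \ref{lem:dGamma-1} and Proposition \ref{prop:Gamma-invertible}, so there is no serious obstacle. The only step that requires care is tracking how the correction term $\la\rho,Y\ra I$ introduced by $d\Gamma(H)$ is absorbed by $\Pi_{c,0}$ when restricted to trace-free tangent vectors; once that is handled, the invertibility of $\Exp_{\rho}^{(e)}$ follows formally.
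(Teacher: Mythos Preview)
Your proof is correct and follows the same route as the paper: set $\mu=\Exp_\rho^{(e)}(X)$, apply $\Gamma^{-1}$, and invert $d\Gamma^{-1}(\rho)$ via $d\Gamma(\Gamma^{-1}(\rho))$ (the paper simply cites $d\Gamma(\Gamma^{-1}(\rho))^{-1}=d\Gamma^{-1}(\rho)$ without your explicit verification). One small slip: you state you need $d\Gamma(\Gamma^{-1}(\rho))$ as a \emph{left} inverse of $d\Gamma^{-1}(\rho)$, but your computation $d\Gamma^{-1}(\rho)[d\Gamma(H)[Y]]=Y$ establishes the \emph{right}-inverse property; this is harmless since both are endomorphisms of the finite-dimensional space $\mc{H}_{c,0}$ (equivalently, since $\Gamma$ is a diffeomorphism by Proposition~\ref{prop:Gamma-invertible} and the chain rule gives both directions at once).
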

\begin{proof}
	Appendix \ref{sec:appendix-quantum-state-assignment-flow}.
\end{proof}
Analogous to \eqref{eq:def_exp_standard_af}, we define the mapping $\exp_{\rho}$, where both the subscript and the argument disambiguate the meaning of `$\exp$'.
\begin{lemma}[\textbf{$\exp$-map}]\label{lem:exp-rho}
The mapping defined using \eqref{eq:def-Exp-rho-e} and \eqref{eq:def-Rrho} by
\begin{subequations}\label{eq:def-exp-rho}
\begin{align}\label{eq:def-exp-rho-a}
\exp_{\rho}\colon\mc{H}_{0,c}\to\mc{D}_{c},\qquad
\exp_{\rho}(X) 
&:= \Exp_{\rho}^{(e)}\circ \Rep_{\rho}[X],\qquad
\rho\in\mc{D}_{c}
&& (\textbf{$\exp$-map})
\intertext{has the explicit form}\label{eq:def-exp-rho-b}
\exp_{\rho}(X) 
&= \Gamma \big( \Gamma^{-1}(\rho) + X \big).
\end{align}
\end{subequations}
\end{lemma}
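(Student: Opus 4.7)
The plan is to unpack the definition \eqref{eq:def-exp-rho-a} using the explicit form \eqref{eq:def-Exp-rho-e-b} of $\Exp_\rho^{(e)}$ and then simplify the inner expression $\Pi_{c,0}\circ\mb{T}_\rho\big[\Rep_\rho[X]\big]$ to obtain $X$. Concretely, substituting \eqref{eq:def-Exp-rho-e-b} and \eqref{eq:def-Rrho} gives
\begin{equation*}
\exp_\rho(X)
= \Exp_\rho^{(e)}\bigl(\Rep_\rho[X]\bigr)
= \Gamma\Bigl(\Gamma^{-1}(\rho)+\Pi_{c,0}\circ\mb{T}_\rho\bigl[\mb{T}_\rho^{-1}[X]-\la\rho,X\ra\rho\bigr]\Bigr).
\end{equation*}
By linearity of $\mb{T}_\rho$ the inner bracket simplifies to $X-\la\rho,X\ra\,\mb{T}_\rho[\rho]$.

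The key observation is that $\mb{T}_\rho[\rho]=I$. This follows immediately from the differential characterization \eqref{eq:def-mcT}:
\begin{equation*}
\mb{T}_\rho[\rho] = \frac{d}{dt}\logm(\rho+t\rho)\Big|_{t=0}
= \frac{d}{dt}\bigl(\log(1+t)\,I+\logm\rho\bigr)\Big|_{t=0}
= I.
\end{equation*}
Hence $\mb{T}_\rho\bigl[\Rep_\rho[X]\bigr] = X - \la\rho,X\ra I$, and since $\Pi_{c,0}[I]=I-\frac{\tr I}{c}I = 0$ while $X\in\mc{H}_{c,0}$ is already trace-free, the projection yields $\Pi_{c,0}\circ\mb{T}_\rho\bigl[\Rep_\rho[X]\bigr] = X$.

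Finally, since $\Gamma^{-1}(\rho)\in\mc{H}_{c,0}$ by Proposition \ref{prop:Gamma-invertible}, the argument of $\Gamma$ lies in $\mc{H}_{c,0}$, so no further use of the extension Lemma \ref{lem:Gamma-Pi0} is needed, and one concludes
\begin{equation*}
\exp_\rho(X) = \Gamma\bigl(\Gamma^{-1}(\rho)+X\bigr),
\end{equation*}
which is \eqref{eq:def-exp-rho-b}. No genuine obstacle is anticipated; the only nontrivial step is recognizing the identity $\mb{T}_\rho[\rho]=I$, after which everything collapses algebraically. This identity is essentially the content of the statement that $\Rep_\rho$ plays the role of the `inverse metric projected to the tangent space', mirroring the simplex case where $R_p$ absorbs the metric-induced correction in one step.
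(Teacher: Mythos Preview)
Your proposal is correct and follows essentially the same route as the paper: substitute \eqref{eq:def-Exp-rho-e-b} and \eqref{eq:def-Rrho}, use the identity $\mb{T}_\rho[\rho]=I$ (which the paper states in the equivalent form $\rho=\mb{T}_\rho^{-1}[I]$), and then remove the projection. Your final step handles $\Pi_{c,0}[X-\la\rho,X\ra I]=X$ by direct computation, whereas the paper appeals to Lemma~\ref{lem:Gamma-Pi0} (or Lemma~\ref{lem:repl-proj-rho-commute}) to drop the projection; both are equally valid.
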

\begin{proof}
	Appendix \ref{sec:appendix-quantum-state-assignment-flow}.
\end{proof}
The following lemma provides the explicit form of the differential of the mapping \eqref{eq:def-exp-rho-b} which resembles the corresponding formula \eqref{eq:dexp-p} of the assignment flow.
\begin{lemma}[\textbf{differential $d\exp_{\rho}$}]\label{lem:dexp-rho}
The differential of the mapping \eqref{eq:def-exp-rho} reads with $\rho\in\mc{D}_{c}$, $X\in\mc{H}_{c,0}$ and $Y\in T\mc{H}_{c,0}\cong\mc{H}_{c,0}$
\begin{equation}\label{eq:dexp-rho}
d\exp_{\rho}(X)[Y]
= \Rep_{\exp_{\rho}(X)}[Y].
\end{equation}
\end{lemma}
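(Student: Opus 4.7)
My plan is to prove the identity by a direct chain-rule computation, using Lemma \ref{lem:exp-rho} to rewrite $\exp_{\rho}$ explicitly as $\Gamma \circ (\Gamma^{-1}(\rho) + \,\cdot\,)$, and then invoking the formula \eqref{eq:dGamma-H} for $d\Gamma$. The only nontrivial simplification needed is the identity $\mb{T}_{\sigma}^{-1}[I] = \sigma$, which follows immediately from \eqref{eq:def-mcT-inverse}.

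First I would set $H := \Gamma^{-1}(\rho) + X \in \mc{H}_{c,0}$ and $\sigma := \Gamma(H) = \exp_{\rho}(X) \in \mc{D}_{c}$. By \eqref{eq:def-exp-rho-b} and the chain rule with respect to the affine argument $H(X) = \Gamma^{-1}(\rho) + X$, one has
\begin{equation*}
d\exp_{\rho}(X)[Y] = d\Gamma(H)[Y].
\end{equation*}
Applying equation \eqref{eq:dGamma-H} of Lemma \ref{lem:dGamma-1} at the point $H$ then yields
\begin{equation*}
d\exp_{\rho}(X)[Y] = \mb{T}_{\sigma}^{-1}\bigl[Y - \la \sigma, Y\ra I\bigr]
= \mb{T}_{\sigma}^{-1}[Y] - \la \sigma, Y\ra\, \mb{T}_{\sigma}^{-1}[I],
\end{equation*}
where I used linearity of $\mb{T}_{\sigma}^{-1}$ and the fact that $\la \sigma, Y\ra \in \R$.

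Next I would compute $\mb{T}_{\sigma}^{-1}[I]$ directly from the integral representation \eqref{eq:def-mcT-inverse}:
\begin{equation*}
\mb{T}_{\sigma}^{-1}[I] = \int_{0}^{1} \sigma^{1-\lambda}\, I\, \sigma^{\lambda}\, d\lambda = \int_{0}^{1} \sigma\, d\lambda = \sigma,
\end{equation*}
since the powers of $\sigma$ commute. Substituting this back gives
\begin{equation*}
d\exp_{\rho}(X)[Y] = \mb{T}_{\sigma}^{-1}[Y] - \la \sigma, Y\ra\, \sigma,
\end{equation*}
which is exactly $\Rep_{\sigma}[Y] = \Rep_{\exp_{\rho}(X)}[Y]$ by the definition \eqref{eq:def-Rrho} of the replicator map. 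This establishes \eqref{eq:dexp-rho}.

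There is no real obstacle here; the argument is a short bookkeeping exercise built entirely on Lemma \ref{lem:dGamma-1} and the definition of $\mb{T}_{\rho}^{-1}$. The only point requiring a moment's attention is that although $Y \in \mc{H}_{c,0}$ has vanishing trace, the scalar $\la \sigma, Y\ra = \tr(\sigma Y)$ generally does not vanish, so the correction term $\la\sigma,Y\ra\, \sigma$ really is present and is precisely what converts the raw differential of $\Gamma$ into the replicator operator at the base point $\exp_{\rho}(X)$.
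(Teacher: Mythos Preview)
Your proof is correct and follows essentially the same route as the paper: apply \eqref{eq:def-exp-rho-b} together with \eqref{eq:dGamma-H}, then use $\mb{T}_{\sigma}^{-1}[I]=\sigma$ to identify the result with $\Rep_{\exp_{\rho}(X)}[Y]$. The only difference is cosmetic---you name $\sigma=\exp_{\rho}(X)$ and spell out the verification of $\mb{T}_{\sigma}^{-1}[I]=\sigma$, whereas the paper invokes it directly.
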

\begin{proof}
	Appendix \ref{sec:appendix-quantum-state-assignment-flow}.
\end{proof}
\begin{remark}[\textbf{comparing $\exp$-maps -- I}]\label{rem:similarity-to-AF-1}
Since \eqref{eq:dexp-rho} resembles \eqref{eq:dexp-p}, one may wonder about the connection of \eqref{eq:def-exp-rho-b} and \eqref{eq:def_exp_standard_af}. In view of \eqref{eq:def-Gamma-a}, we define
\begin{equation}
\gamma\colon T_{c,0}\to\mc{S}_{c},\qquad
\gamma(v) := \frac{e^{v}}{\la\eins,e^{v}\ra}
= \exp_{\eins_{\mc{S}_{c}}}(v)
\end{equation}
and compute with the expression for its inverse (cf.~\cite{savarino2021continuous})
\begin{subequations}
\begin{align}
\gamma^{-1}(p) 
&= \pi_{c,0}\log\frac{p}{\eins_{\mc{S}_{c}}}
= \pi_{c,0}(\log p-\log\eins_{\mc{S}_{c}})
= \pi_{c,0}\log p
\\
&\overset{\eqref{eq:def-Pi0-p}}{=} 
\log p - \la \eins_{\mc{S}_{c}},\log p\ra\eins_{c}
\end{align}
\end{subequations}
which resembles \eqref{eq:def-Gamma-1}. Moreover, in view of \eqref{eq:def-exp-rho-b}, the analogous expression using $\gamma$, instead of $\Gamma$, reads
\begin{subequations}
\begin{align}
\gamma\big(\gamma^{-1}(p)+v\big)
&= \frac{e^{\pi_{c,0}\log p + v}}{\la\eins,e^{\pi_{c,0}\log p + v}\ra}
= \frac{\la\eins_{\mc{S}_{c}},\log p\ra p\cdot e^{v}}{\la \la\eins_{\mc{S}_{c}},\log p\ra p,e^{v}\ra}
= \frac{p\cdot e^{v}}{\la p,e^{v}\ra}
\\
&= \exp_{p}(v).
\end{align}
\end{subequations}
\end{remark}

\begin{remark}[\textbf{comparing $\exp$-maps -- II}]\label{rem:similarity-to-AF-2}
Using the above definitions and relations, we check equation \eqref{eq:exp-log-e}, $\exp_{\rho}^{(\log)}(Y)
= \Exp_{\rho}^{(e)}(X)$, where the relation \eqref{eq:exp-log-e-Y} between $Y$ and $X$ can now be written in the form 
\begin{equation}
Y \overset{\eqref{eq:def-psi-Gamma}}{=} X - \psi\big(\logm(\rho)+\mb{T}_{\rho}[X]\big) \rho. 
\end{equation}
Direct computation yields
\begin{subequations}
\begin{align}
\exp_{\rho}^{(\log)}(Y)
&\overset{\eqref{eq:exp_log_euclidean}}{=} 
\expm (\logm(\rho) + \mathbb{T}_\rho[Y])
\\
&\overset{\eqref{eq:exp-log-e-Y}}{=}
\expm \Big(\logm(\rho) + \mathbb{T}_\rho[X] - \psi\big(\logm(\rho) + \mb{T}_{\rho}[X]\big) \overbrace{\mb{T}_{\rho}\circ \underbrace{\mb{T}_{\rho}^{-1}[I_{c}]}_{=\rho}}^{=I_{c}}\Big)
\\
&\overset{\substack{\eqref{eq:def-Gamma} \\\eqref{eq:Gamma-Pi0}}}{=} 
\Gamma\big(\Pi_{c,0}[\logm(\rho)] + \Pi_{c,0}\circ\mb{T}_{\rho}[X]\big)
= \Gamma\big(\Gamma^{-1}(\rho)+\Pi_{c,0}\circ\mb{T}_{\rho}[X]\big)
\\
&= \Exp_{\rho}^{(e)}(X).
\end{align}
\end{subequations}
\end{remark}

\subsection{Single-Vertex Density Matrix Assignment Flow}\label{sec:single-vertex-DAF}

We generalize the single vertex assignment flow equation \eqref{eq:single-vertex-AF} to the manifold $(\mc{D}_{c}, g_{\rho})$ given by \eqref{eq:def-mcDc} with the BKM metric \eqref{eq:Bogoliubov}.

Defining in view of \eqref{eq:def-LpD} the \textit{likelihood matrix}
\begin{equation}\label{eq:def-L-rho-D}
L_{\rho}\colon\mc{H}_{c}\to\mc{D}_{c},\qquad
L_{\rho}(D) := \exp_{\rho}(-\Pi_{c,0}[D]),\qquad
\rho\in\mc{D}_{c},
\end{equation}
the corresponding \textit{single vertex quantum state assignment flow (SQSAF)} equation reads
\begin{subequations}\label{eq:single-vertex-matrix-flow}
\begin{align}
\dot\rho &= \Rep_{\rho}[L_{\rho}(D)]
&& (\textbf{SQSAF})
\\ \label{eq:single-vertex-matrix-flow-b}
&\overset{\eqref{eq:def-Rrho}}{=} 
\mb{T}_{\rho}^{-1}[L_{\rho}(D)] - \la\rho,L_{\rho}(D)\ra\rho,\qquad
\rho(0)=\eins_{\mc{D}_{c}}=\Diag(\eins_{\mc{S}_{c}}).
\end{align}
\end{subequations}
Proposition \ref{prop:single-vertex-matrix-flow} below specifies its properties after a preparatory Lemma.
\begin{lemma}\label{lem:rho-LD-diagonalized}
Assume 
\begin{equation}\label{eq:lem-rho-LD-decomposition}
D=Q\Lambda_{D}Q^{\T}\in\mc{H}_{c} 
\qquad\text{and}\qquad 
\rho=Q\Lambda_{\rho}Q^{\T}\in\mc{D}_{c}
\end{equation}
can be simultaneously diagonalized with $Q\in\LG{O}(c)$, $\Lambda_{D}=\Diag(\lambda_{D})$, $\Lambda_{\rho}=\Diag(\lambda_{\rho})$ and $\lambda_{\rho}\in\mc{S}_{c}$ since $\tr\rho=1$. Then
\begin{equation}
L_{\rho}(D) = Q \Diag\big(\exp_{\lambda_{\rho}}(-\lambda_{D})\big) Q^{\T}.
\end{equation}
\end{lemma}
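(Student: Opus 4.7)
The strategy is to reduce everything to a diagonal computation in the common eigenbasis $Q$, and then recognize that on the diagonal the quantum $\exp_\rho$ coincides with the simplex $\exp_p$. Concretely, I would start from Lemma \ref{lem:exp-rho}, which gives the closed form
\begin{equation*}
L_\rho(D) \;=\; \exp_\rho\bigl(-\Pi_{c,0}[D]\bigr) \;=\; \Gamma\bigl(\Gamma^{-1}(\rho) - \Pi_{c,0}[D]\bigr),
\end{equation*}
so the task becomes to evaluate $\Gamma$ and $\Gamma^{-1}$ on matrices that are diagonal in the basis $Q$.

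Next I would exploit functional calculus: because $\rho = Q\Lambda_\rho Q^\T$ with $\Lambda_\rho = \Diag(\lambda_\rho)$, the matrix logarithm acts entrywise on the diagonal, giving $\logm\rho = Q \Diag(\log\lambda_\rho) Q^\T$. Since $\Pi_{c,0}[X] = X - \tfrac{\tr X}{c} I_c$ and $Q Q^\T = I_c$, conjugation by $Q$ commutes with $\Pi_{c,0}$, so
\begin{equation*}
\Gamma^{-1}(\rho) = Q\,\Diag(\pi_{c,0}\log\lambda_\rho)\,Q^\T, \qquad
\Pi_{c,0}[D] = Q\,\Diag(\pi_{c,0}\lambda_D)\,Q^\T.
\end{equation*}
Their difference is again diagonal in the basis $Q$, and so is its matrix exponential. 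Substituting into the definition \eqref{eq:def-Gamma} of $\Gamma$, the trace in the denominator equals $\langle\eins_c, e^{\pi_{c,0}\log\lambda_\rho - \pi_{c,0}\lambda_D}\rangle$, hence
\begin{equation*}
L_\rho(D) \;=\; Q\,\Diag\!\left(\frac{e^{\pi_{c,0}\log\lambda_\rho - \pi_{c,0}\lambda_D}}{\langle \eins_c,\, e^{\pi_{c,0}\log\lambda_\rho - \pi_{c,0}\lambda_D}\rangle}\right)\,Q^\T.
\end{equation*}

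It then remains to identify the diagonal vector with $\exp_{\lambda_\rho}(-\lambda_D)$. Using $\pi_{c,0}v = v - \tfrac{1}{c}\langle\eins_c,v\rangle\eins_c$, the projections $\pi_{c,0}$ in the numerator contribute only scalar multiples of $\eins_c$ in the exponent, i.e.\ overall constant factors that cancel against the normalizing denominator. What survives is exactly $\lambda_\rho \cdot e^{-\lambda_D}$ in the numerator and $\langle\lambda_\rho, e^{-\lambda_D}\rangle$ in the denominator, which by \eqref{eq:def_exp_standard_af} equals $\exp_{\lambda_\rho}(-\lambda_D)$; this matches the identity already noted in Remark \ref{rem:similarity-to-AF-1}.

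The computation is essentially bookkeeping — the only point requiring a moment of care is verifying that $\Pi_{c,0}$ commutes with conjugation by the orthogonal matrix $Q$ and that the residual constant $e^{(\text{const})\eins_c}$ coming from the two projections drops out after normalization. Once these are checked, the claim follows directly.
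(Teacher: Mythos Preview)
Your proposal is correct and follows essentially the same route as the paper: both reduce $L_\rho(D)=\Gamma(\Gamma^{-1}(\rho)-\cdot)$ to a diagonal computation in the $Q$-basis via functional calculus and then identify the resulting vector with the simplex map $\exp_{\lambda_\rho}(-\lambda_D)$. The only cosmetic difference is that the paper drops the projection on $D$ at the outset using $\exp_\rho(-\Pi_{c,0}[D])=\exp_\rho(-D)$ (Lemma~\ref{lem:repl-proj-rho-commute}), whereas you carry both $\pi_{c,0}$'s through and cancel the resulting scalar factors against the normalization at the end.
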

\begin{proof}
	Appendix \ref{sec:appendix-quantum-state-assignment-flow}.
\end{proof}
\begin{proposition}[\textbf{SQSAF limit}]\label{prop:single-vertex-matrix-flow}
Let $D = Q\Lambda_{D}Q^{\T}$ be the spectral decomposition of $D$ with eigenvalues $\lambda_{1}\geq\dotsb\geq\lambda_{c}$ and orthonormal eigenvectors $Q = (q_{1},\dotsc,q_{c})$. Assume the minimal eigenvalue $\lambda_{c}$ is unique. Then the solution $\rho(t)$ to \eqref{eq:single-vertex-matrix-flow} satisfies
\begin{equation}
\lim_{t\to\infty}\rho(t) = \Pi_{q_{c}} := q_{c}q_{c}^{\T}.
\end{equation}
\end{proposition}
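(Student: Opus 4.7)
The plan is to exploit the fact that the initial condition $\rho(0) = \frac{1}{c}I_{c}$ commutes with every Hermitian matrix, in particular with $D$, so that $\rho(0)$ and $D$ are simultaneously diagonalizable in the basis $Q$. I would show that the SQSAF preserves this simultaneous diagonalizability, reducing the matrix flow to the standard single-vertex assignment flow on the eigenvalue vector, and then invoke Corollary \ref{prop:single-vertex-AF}.

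First, I would verify that the submanifold $\mc{D}_{c}^{Q} := \{Q\Diag(\lambda) Q^{\T} \colon \lambda\in\mc{S}_{c}\}$ of density matrices diagonal in the basis $Q$ is invariant under the flow \eqref{eq:single-vertex-matrix-flow}. Since $\rho\in\mc{D}_{c}^{Q}$ iff $\rho$ commutes with $D$, it suffices to show that the vector field $\Rep_{\rho}[L_{\rho}(D)]$ lies in the tangent space of $\mc{D}_{c}^{Q}$, i.e.~is itself diagonal in $Q$, whenever $\rho$ is. Each building block appearing in the vector field preserves this property: the projection $\Pi_{c,0}$ and the matrix logarithm/exponential in $\exp_{\rho}$ clearly do, and the integral representation \eqref{eq:def-mcT-inverse} of $\mb{T}_{\rho}^{-1}$ shows that when $\rho=Q\Lambda_{\rho}Q^{\T}$ and $X=QX'Q^{\T}$ with $X'$ diagonal, then $\mb{T}_{\rho}^{-1}[X]=QX'Q^{\T}$ remains diagonal in $Q$ (the powers $\rho^{1-\lambda}$ and $\rho^{\lambda}$ share the eigenbasis $Q$). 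By uniqueness of ODE solutions, $\rho(t)\in\mc{D}_{c}^{Q}$ for all $t\geq 0$.

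Second, writing $\rho(t)=Q\Diag(\lambda_{\rho}(t))Q^{\T}$, I would use Lemma \ref{lem:rho-LD-diagonalized} to express $L_{\rho}(D)=Q\Diag\big(\exp_{\lambda_{\rho}}(-\lambda_{D})\big)Q^{\T}$. An analogous spectral computation using \eqref{eq:def-mcT-inverse} shows that on matrices jointly diagonal in $Q$, the replicator operator $\Rep_{\rho}$ acts precisely as the simplex replicator $R_{\lambda_{\rho}}$ on the eigenvalue vectors, since $\mb{T}_{\rho}^{-1}$ reduces to componentwise multiplication by $\lambda_{\rho}$ on the diagonal and $\la\rho,X\ra=\la\lambda_{\rho},X'\ra$. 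Consequently, the SQSAF reduces to the vector ODE
\begin{equation*}
\dot{\lambda}_{\rho} = R_{\lambda_{\rho}} L_{\lambda_{\rho}}(\lambda_{D}),\qquad
\lambda_{\rho}(0) = \eins_{\mc{S}_{c}},
\end{equation*}
which is exactly the single-vertex assignment flow \eqref{eq:single-vertex-AF} with data vector $\lambda_{D}$.

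Third, since $\lambda_{c}$ is the unique minimum of $\lambda_{D}$, Corollary \ref{prop:single-vertex-AF} yields $\lambda_{\rho}(t)\to e_{c}$ as $t\to\infty$, so
\begin{equation*}
\rho(t) = Q\Diag(\lambda_{\rho}(t))Q^{\T} \longrightarrow Q\Diag(e_{c})Q^{\T} = q_{c}q_{c}^{\T} = \Pi_{q_{c}}.
\end{equation*}
The main obstacle I anticipate is the bookkeeping in the second step: cleanly verifying that $\mb{T}_{\rho}^{-1}$ (and thus $\Rep_{\rho}$) acts diagonally on the $Q$-diagonal subspace and agrees with the simplex replicator on the eigenvalue coordinates. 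Once this reduction to \eqref{eq:single-vertex-AF} is in hand, the rest follows immediately from the already established convergence on the simplex.
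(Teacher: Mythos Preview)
Your proposal is correct and follows essentially the same route as the paper: both arguments reduce the SQSAF to the single-vertex simplex flow \eqref{eq:single-vertex-AF} on the eigenvalue vector by exploiting that $\rho(0)=\eins_{\mc{D}_{c}}$ and $D$ are simultaneously diagonalizable in $Q$, invoke Lemma~\ref{lem:rho-LD-diagonalized}, and then finish with Corollary~\ref{prop:single-vertex-AF}. The only difference is presentational: you argue invariance of the $Q$-diagonal submanifold directly from the structure of $\Rep_{\rho}$ and $\mb{T}_{\rho}^{-1}$ together with ODE uniqueness, while the paper computes $\dot\rho(0)$ explicitly, matches it with $\dot p(0)$, and then asserts the factorization $\rho(t)=Q\Diag(\lambda_{\rho}(t))Q^{\T}$ for all $t$---which is precisely the invariance statement you spell out.
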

\begin{proof}
	Appendix \ref{sec:appendix-quantum-state-assignment-flow}.
\end{proof}

\subsection{Quantum State Assignment Flow}\label{sec:DAF}
This section describes our main result, the definition of a novel flow of coupled density matrices in terms of a parametrized interaction of single vertex flows of the form \eqref{eq:single-vertex-matrix-flow} on a given graph $\mc{G} = (\mc{V},\mc{E},\w)$.

We assume the weight function $\w\colon\mc{E}\to\R_{+}$ to be nonnegative with $\w_{ij}=0$ if $ij\not\in\mc{E}$ and 
\begin{equation}\label{eq:omega-sum-1}
\sum_{k\in\mc{N}_{i}}\w_{ik}=1,
\end{equation}
where we adopt the notation \eqref{eq:def-mcN-i} for neighborhoods $\mc{N}_{i},\;i\in\mc{V}$. Analogous to \eqref{eq:def-mcW}, we define the product manifold
\begin{equation}\label{eq:def-mcQ}
\rho \in\mc{Q}_{c} := \underbrace{\mc{D}_{c}\times\dotsb\times\mc{D}_{c}}_{|\mc{V}|\;\text{factors}}
\end{equation}
with $\mc{D}_{c}$ given by \eqref{eq:def-mcDc}. The corresponding factors of $\rho$ are denoted by 
\begin{equation}
\rho = (\rho_{i})_{i\in[c]},\quad
\rho_{i}\in\mc{D}_{c},\quad i\in\mc{V}.
\end{equation}
$\mc{Q}_{c}$ becomes a Riemannian manifold when equipped with the metric
\begin{equation}\label{eq:KM-metric-AMF}
	g_\rho (X,Y) := \sum_{i \in \mc{V}} g_{\rho_i}(X_i,Y_i), \qquad X,Y \in T\mc{Q}_{c}:=\mc{H}_{c,0} \times \dotsb \times \mc{H}_{c,0},
\end{equation}
with $g_{\rho_{i}}$ given by \eqref{eq:Bogoliubov} for each $i\in\mc{V}$. We set
\begin{equation}
\eins_{\mc{Q}_{c}} := (\eins_{\mc{D}_{c}})_{i\in\mc{V}} \in\mc{Q}_{c},
\end{equation}
with $\eins_{\mc{D}_{c}}$ given by \eqref{eq:single-vertex-matrix-flow-b}. Our next step is to define a \textit{similarity mapping} analogous to \eqref{eq:def-Si},
\begin{equation}\label{eq:def-Si-rho}
S\colon\mc{V}\times\mc{Q}_{c},\qquad
S_{i}(\rho) := \Exp^{(e)}_{\rho_{i}}\Big(\sum_{k\in\mc{N}_{i}}\w_{ik}\big(\Exp^{(e)}_{\rho_{i}}\big)^{-1}\big(L_{\rho_{k}}(D_{k})\big)\Big),
\end{equation}
based on the mappings \eqref{eq:def-Exp-rho-e} and \eqref{eq:def-L-rho-D}. 
Thanks to using the exponential map of the e-connection, the matrix $S_{i}(\rho)$ can be rewritten and computed in a simpler, more explicit form.
\begin{lemma}[\textbf{similarity map}]\label{lem:Si-rho-explicit}
Equation \eqref{eq:def-Si-rho} is equivalent to
\begin{equation}\label{eq:def-Si-rho-simple}
S_{i}(\rho)=\Gamma\Big(\sum_{k\in\mc{N}_{i}}\w_{ik}(\logm\rho_{k}-D_{k})\Big).
\end{equation}
\end{lemma}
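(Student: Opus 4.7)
The strategy is to apply $\Gamma^{-1}$ to both sides of \eqref{eq:def-Si-rho} and exploit the closed-form expressions for the e-exponential map and its inverse established in Proposition \ref{prop:e-geodesic} and Corollary \ref{cor:Exp-1}, together with the normalization $\sum_{k\in\mc{N}_{i}}\w_{ik}=1$ from \eqref{eq:omega-sum-1}. The only mild technical point is that the result will naturally land in $\mc{H}_{c}$ rather than in $\mc{H}_{c,0}$, so the final step invokes Lemma \ref{lem:Gamma-Pi0} to drop the projection $\Pi_{c,0}$.

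Concretely, I would proceed in the following steps. First, from \eqref{eq:def-Exp-rho-e} one has the identity $\Gamma^{-1}\big(\Exp^{(e)}_{\rho}(X)\big) = \Gamma^{-1}(\rho) + d\Gamma^{-1}(\rho)[X]$ for any $X \in \mc{H}_{c,0}$. Applied to $S_{i}(\rho)$ from \eqref{eq:def-Si-rho} this yields
\begin{equation*}
\Gamma^{-1}\big(S_{i}(\rho)\big) = \Gamma^{-1}(\rho_{i}) + \sum_{k\in\mc{N}_{i}} \w_{ik}\, d\Gamma^{-1}(\rho_{i})\big[\big(\Exp^{(e)}_{\rho_{i}}\big)^{-1}\big(L_{\rho_{k}}(D_{k})\big)\big],
\end{equation*}
using linearity of $d\Gamma^{-1}(\rho_{i})$. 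Second, I would plug in the expression for $\big(\Exp^{(e)}_{\rho_{i}}\big)^{-1}$ from \eqref{eq:Exp-1} and use the chain rule $d\Gamma^{-1}(\rho_{i}) \circ d\Gamma\big(\Gamma^{-1}(\rho_{i})\big) = \mathrm{id}_{\mc{H}_{c,0}}$, obtained by differentiating $\Gamma^{-1} \circ \Gamma = \mathrm{id}$, to collapse the $d\Gamma^{-1}\circ d\Gamma$ factor. This gives
\begin{equation*}
\Gamma^{-1}\big(S_{i}(\rho)\big) = \Gamma^{-1}(\rho_{i}) + \sum_{k\in\mc{N}_{i}}\w_{ik}\big[\Gamma^{-1}\big(L_{\rho_{k}}(D_{k})\big) - \Gamma^{-1}(\rho_{i})\big],
\end{equation*}
and the normalization \eqref{eq:omega-sum-1} eliminates the $\Gamma^{-1}(\rho_{i})$ terms.

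Third, I would evaluate $\Gamma^{-1}\big(L_{\rho_{k}}(D_{k})\big)$ explicitly. Combining the closed form \eqref{eq:def-exp-rho-b} of $\exp_{\rho}$ with the definition \eqref{eq:def-L-rho-D} of $L_{\rho}(D)$ yields $L_{\rho_{k}}(D_{k}) = \Gamma\big(\Gamma^{-1}(\rho_{k}) - \Pi_{c,0}[D_{k}]\big)$, and applying $\Gamma^{-1}$ together with $\Gamma^{-1}(\rho_{k}) = \Pi_{c,0}[\logm\rho_{k}]$ from \eqref{eq:def-Gamma-1} produces $\Gamma^{-1}\big(L_{\rho_{k}}(D_{k})\big) = \Pi_{c,0}[\logm\rho_{k} - D_{k}]$. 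Substituting into the previous display and pulling $\Pi_{c,0}$ out by linearity gives
\begin{equation*}
\Gamma^{-1}\big(S_{i}(\rho)\big) = \Pi_{c,0}\Big[\sum_{k\in\mc{N}_{i}}\w_{ik}(\logm\rho_{k} - D_{k})\Big].
\end{equation*}

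Finally, I would apply $\Gamma$ to both sides and invoke Lemma \ref{lem:Gamma-Pi0}, which states $\Gamma(Z)=\Gamma(\Pi_{c,0}[Z])$ on the extended domain, to conclude \eqref{eq:def-Si-rho-simple}. The only subtlety worth flagging is checking at the outset that $\big(\Exp^{(e)}_{\rho_{i}}\big)^{-1}\big(L_{\rho_{k}}(D_{k})\big)$ genuinely lies in $\mc{H}_{c,0}$ so that linearity of $d\Gamma^{-1}(\rho_{i})$ applies, which is automatic from Corollary \ref{cor:Exp-1}; the rest is bookkeeping.
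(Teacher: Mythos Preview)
Your proposal is correct and follows essentially the same route as the paper's proof: both use the closed forms \eqref{eq:def-Exp-rho-e}, \eqref{eq:Exp-1}, \eqref{eq:def-exp-rho-b}, the chain-rule cancellation $d\Gamma^{-1}(\rho_{i})\circ d\Gamma(\Gamma^{-1}(\rho_{i}))=\mrm{id}$, the normalization \eqref{eq:omega-sum-1}, and Lemma~\ref{lem:Gamma-Pi0} to drop $\Pi_{c,0}$. The only cosmetic difference is that the paper first evaluates $(\Exp^{(e)}_{\rho_i})^{-1}(L_{\rho_k}(D_k))=d\Gamma(H_i)[H_k-D_k-H_i]$ with $H_j=\Gamma^{-1}(\rho_j)$ and then substitutes into \eqref{eq:def-Si-rho}, whereas you apply $\Gamma^{-1}$ to the whole expression up front; the algebra and the ingredients are identical.
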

\begin{proof}
	Appendix~\ref{sec:appendix-quantum-state-assignment-flow}.
\end{proof}
Expression \eqref{eq:def-Si-rho}, which defines the similarity map, looks like a single iterative step for computing the Riemannian center of mass of the likelihood matrices $\{L_{\rho_{k}}(D_{k})\colon k\in\mc{N}_{i}\}$ if(!) the exponential map of the Riemannian (Levi Civita) connection were used. Instead, when using the exponential map $\Exp^{(e)}$, $S_{i}(\rho)$ may be interpreted as carrying out a single iterative step for the corresponding \textit{geometric mean} on the manifold $\mc{D}_{c}$.

A natural idea therefore is to define the similarity map to be this geometric mean, rather than just by a single iterative step. Surprisingly, analogous to the similarity map \eqref{eq:def-Si} for categorial distributions (cf.~\cite{Schnorr:2019aa}), both definitions are \textit{identical}, as shown next.
\begin{proposition}[\textbf{geometric mean property}]\label{prop:Si-riemannian_center}
Assume that $\ol{\rho}\in\mc{D}_{c}$ solves the equation
\begin{equation}
0 = \sum_{k\in\mc{N}_{i}}\w_{ik}\big(\Exp^{(e)}_{\ol{\rho}}\big)^{-1}\big(L_{\rho_{k}}(D_{k})\big)
\end{equation}
which corresponds to the optimality condition for Riemannian centers of mass \cite[Lemma 6.9.4]{Jost:2017aa}, except for using a different exponential map. Then
\begin{equation}
\ol{\rho}=S_{i}(\rho)
\end{equation}
with the right-hand side given by \eqref{eq:def-Si-rho}.
\end{proposition}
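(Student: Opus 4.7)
The plan is to exploit the closed-form expression for $(\Exp^{(e)}_{\rho})^{-1}$ given by Corollary~\ref{cor:Exp-1} together with the fact that $d\Gamma$ at any point of $\mc{H}_{c,0}$ is a linear isomorphism onto $\mc{H}_{c,0}$, in order to reduce the fixed-point condition to a linear relation in the parameter space $\mc{H}_{c,0}$, which can then be explicitly solved via the bijection $\Gamma$.

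First, substitute \eqref{eq:Exp-1} into the hypothesis. Since $d\Gamma\big(\Gamma^{-1}(\ol{\rho})\big)$ is a fixed linear map (independent of $k$), linearity lets me pull the sum inside and split the two contributions, obtaining
\[
0 \;=\; d\Gamma\big(\Gamma^{-1}(\ol{\rho})\big)\Big[\,\sum_{k\in\mc{N}_{i}}\w_{ik}\,\Gamma^{-1}\big(L_{\rho_{k}}(D_{k})\big) \;-\; \Big(\sum_{k\in\mc{N}_{i}}\w_{ik}\Big)\,\Gamma^{-1}(\ol{\rho})\Big].
\]
The normalization \eqref{eq:omega-sum-1} collapses the second term to $\Gamma^{-1}(\ol{\rho})$. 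Since $\Gamma$ is a bijection between $\mc{H}_{c,0}$ and $\mc{D}_{c}$ by Proposition~\ref{prop:Gamma-invertible}, its differential $d\Gamma\big(\Gamma^{-1}(\ol{\rho})\big)$ is a linear isomorphism from $\mc{H}_{c,0}$ onto the tangent space at $\ol{\rho}$; thus the bracketed expression must vanish, yielding
\[
\Gamma^{-1}(\ol{\rho}) \;=\; \sum_{k\in\mc{N}_{i}}\w_{ik}\,\Gamma^{-1}\big(L_{\rho_{k}}(D_{k})\big).
\]

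Next, compute $\Gamma^{-1}\big(L_{\rho_{k}}(D_{k})\big)$ explicitly. Using the definition \eqref{eq:def-L-rho-D} of the likelihood matrix and the closed form \eqref{eq:def-exp-rho-b} of the $\exp$-map, one has
\[
L_{\rho_{k}}(D_{k}) \;=\; \exp_{\rho_{k}}\big(-\Pi_{c,0}[D_{k}]\big) \;=\; \Gamma\big(\Gamma^{-1}(\rho_{k}) - \Pi_{c,0}[D_{k}]\big),
\]
so that $\Gamma^{-1}\big(L_{\rho_{k}}(D_{k})\big) = \Pi_{c,0}[\logm\rho_{k}] - \Pi_{c,0}[D_{k}] = \Pi_{c,0}[\logm\rho_{k} - D_{k}]$ by Proposition~\ref{prop:Gamma-invertible} and linearity of $\Pi_{c,0}$. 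Substituting and using linearity of $\Pi_{c,0}$ once more yields
\[
\Gamma^{-1}(\ol{\rho}) \;=\; \Pi_{c,0}\Big[\sum_{k\in\mc{N}_{i}}\w_{ik}\big(\logm\rho_{k} - D_{k}\big)\Big].
\]

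Finally, apply $\Gamma$ to both sides. By Lemma~\ref{lem:Gamma-Pi0}, $\Gamma$ is invariant under the projection $\Pi_{c,0}$, so the right-hand side equals $\Gamma\big(\sum_{k}\w_{ik}(\logm\rho_{k}-D_{k})\big)$, which by Lemma~\ref{lem:Si-rho-explicit} is precisely $S_{i}(\rho)$. I do not foresee a substantive obstacle: everything reduces to linearity of $d\Gamma$, the normalization $\sum_{k}\w_{ik}=1$, and the explicit forms of $\Gamma^{-1}$ and $\exp_{\rho}$ already proved above. The only mild subtlety is noting that $d\Gamma\big(\Gamma^{-1}(\ol{\rho})\big)$ is injective on $\mc{H}_{c,0}$, which follows from bijectivity of $\Gamma$ via the inverse function theorem applied to the smooth map $\Gamma\colon\mc{H}_{c,0}\to\mc{D}_{c}$.
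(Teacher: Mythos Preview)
Your proposal is correct and follows essentially the same route as the paper's proof: substitute the closed form \eqref{eq:Exp-1} for $(\Exp^{(e)}_{\ol{\rho}})^{-1}$, factor out the linear isomorphism $d\Gamma(\Gamma^{-1}(\ol{\rho}))$, use its injectivity together with the normalization $\sum_k \w_{ik}=1$ to obtain a linear equation in $\mc{H}_{c,0}$, and then apply $\Gamma$ and Lemma~\ref{lem:Gamma-Pi0} to identify $\ol{\rho}$ with $S_i(\rho)$ via Lemma~\ref{lem:Si-rho-explicit}. The paper's argument is just a more compressed version of exactly these steps.
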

\begin{proof}
	Appendix~\ref{sec:appendix-quantum-state-assignment-flow}.
\end{proof}
We are now in the position to define the \textit{quantum state assignment flow} along the lines of the original assignment flow \eqref{eq:AF},
\begin{equation}\label{eq:density-AF}
\dot\rho = \Rep_{\rho}[S(\rho)],\qquad \rho(0)=\eins_{\mc{Q}_{c}},\qquad\qquad
(\textbf{QSAF})
\end{equation}
where both the replicator map $\Rep_{\rho}$ and the similarity map $S(\cdot)$ apply factorwise,
\begin{subequations}\label{eq:def-R-S-rho}
\begin{align}
S(\rho)_{i} &= S_{i}(\rho), \\ \label{eq:def-R-rho}
\Rep_{\rho}[S(\rho)]_{i} &= \Rep_{\rho_{i}}[S_{i}(\rho)],\quad
i\in\mc{V}
\end{align}
\end{subequations}
with the mappings $S_{i}$ given by \eqref{eq:def-Si-rho-simple} and $\Rep_{\rho_{i}}$ by \eqref{eq:def-Rrho}.

\subsection{Reparametrization, Riemannian Gradient Flow}\label{sec:S-DAF}
The reparametrization of the assignment flow \eqref{eq:S-flow} for categorial distributions described in Section \ref{sec:standardAF_S-flow} has proven to be useful for characterizing and analyzing assignment flows. Under suitable conditions on the parameter matrix $\Omega$, the flow performs a Riemannian descent flow with respect to a non-convex potential \cite[Prop.~3.9]{savarino2021continuous} and has convenient stability and convergence properties \cite{zern2021assignment}. 

In this section, we derive a similar reparametrization of the quantum state assignment flow \eqref{eq:density-AF}.
\begin{proposition}[\textbf{reparametrization}]\label{prop:reparametrization_QSAF}
Define the linear mapping
\begin{equation}\label{eq:def-Omega-rho}
\Omega\colon\mc{Q}_{c}\to\mc{Q}_{c},\qquad
\Omega[\rho]_{i} := \sum_{k\in\mc{N}_{i}}\w_{ik}\rho_{k}.
\end{equation}
Then the density matrix assignment flow equation \eqref{eq:density-AF} is equivalent to the system
\begin{subequations}\label{eq:S-flow-density}
\begin{align}
\dot\rho &= \Rep_{\rho}[\mu],\qquad\quad\;\;\,\rho(0)=\eins_{\mc{Q}_{c}},
\label{eq:S-flow-density-a} \\ \label{eq:S-flow-density-b}
\dot\mu &= \Rep_{\mu}\big[\Omega[\mu]\big],\qquad
\mu(0)=S(\eins_{\mc{Q}_{c}}).
\end{align}
\end{subequations}
\end{proposition}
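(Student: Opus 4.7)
The plan is to set $\mu(t) := S(\rho(t))$ for a solution $\rho(t)$ of \eqref{eq:density-AF} and show directly that this $\mu$ satisfies \eqref{eq:S-flow-density-b}; since by definition $\dot\rho = \Rep_\rho[S(\rho)] = \Rep_\rho[\mu]$ and $\mu(0) = S(\eins_{\mc{Q}_c})$, equation \eqref{eq:S-flow-density-a} and the initial conditions then come for free. The converse (from \eqref{eq:S-flow-density} back to \eqref{eq:density-AF}) will be handled by a uniqueness argument.

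Using Lemma \ref{lem:Si-rho-explicit} I write $\mu_i = \Gamma(V_i)$ with $V_i := \sum_{k\in\mc{N}_i}\w_{ik}\bigl(\logm\rho_k - D_k\bigr)$. Since the data $D_k$ are constant, differentiation gives $\dot V_i = \sum_{k\in\mc{N}_i}\w_{ik}\,\frac{d}{dt}\logm\rho_k$. The definition \eqref{eq:def-mcT} of $\mb{T}_{\rho_k}$ is exactly $\frac{d}{dt}\logm\rho_k = \mb{T}_{\rho_k}[\dot\rho_k]$, and from \eqref{eq:density-AF}--\eqref{eq:def-R-rho} combined with \eqref{eq:def-Rrho} I obtain
\begin{equation*}
\mb{T}_{\rho_k}[\dot\rho_k] = \mb{T}_{\rho_k}\bigl[\mb{T}_{\rho_k}^{-1}[\mu_k] - \la\rho_k,\mu_k\ra\rho_k\bigr] = \mu_k - \la\rho_k,\mu_k\ra I_c,
\end{equation*}
where the last step uses the identity $\mb{T}_\rho[\rho] = I_c$ (equivalent to $\mb{T}_\rho^{-1}[I_c] = \rho$, which was used in Remark \ref{rem:similarity-to-AF-2}). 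Summing over $k\in\mc{N}_i$ and recalling \eqref{eq:def-Omega-rho} yields $\dot V_i = \Omega[\mu]_i - c_i\,I_c$ with the scalar $c_i := \sum_{k\in\mc{N}_i}\w_{ik}\la\rho_k,\mu_k\ra$.

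Now I apply the chain rule and Lemma \ref{lem:dGamma-1}, which gives $d\Gamma(V_i)[Y] = \mb{T}_{\mu_i}^{-1}\bigl[Y - \la\mu_i,Y\ra I_c\bigr]$. Substituting $Y = \dot V_i = \Omega[\mu]_i - c_i I_c$ and using $\la\mu_i,I_c\ra = \tr\mu_i = 1$, the $c_i I_c$ terms cancel inside the bracket, leaving
\begin{equation*}
\dot\mu_i = d\Gamma(V_i)[\dot V_i] = \mb{T}_{\mu_i}^{-1}\bigl[\Omega[\mu]_i - \la\mu_i,\Omega[\mu]_i\ra I_c\bigr] = \mb{T}_{\mu_i}^{-1}\bigl[\Omega[\mu]_i\bigr] - \la\mu_i,\Omega[\mu]_i\ra\mu_i = \Rep_{\mu_i}\bigl[\Omega[\mu]_i\bigr],
\end{equation*}
invoking $\mb{T}_{\mu_i}^{-1}[I_c] = \mu_i$ once more. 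This is precisely \eqref{eq:S-flow-density-b}.

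For the converse, given a solution $(\rho,\mu)$ of \eqref{eq:S-flow-density}, define $\tilde\mu(t) := S(\rho(t))$. Repeating the computation above with $\tilde\mu$ in place of $S(\rho)$ shows that $\tilde\mu$ satisfies $\dot{\tilde\mu}_i = \Rep_{\tilde\mu_i}[\Omega[\mu]_i]$, where the $\mu$ on the right-hand side is the one from the system (it enters through $\dot\rho_k = \Rep_{\rho_k}[\mu_k]$). Since $\mu$ trivially solves the same ODE (with $\tilde\mu$ replaced by $\mu$) and $\tilde\mu(0) = S(\eins_{\mc{Q}_c}) = \mu(0)$, uniqueness for this ODE in the first argument forces $\tilde\mu \equiv \mu$, so $\mu = S(\rho)$ and $\rho$ solves \eqref{eq:density-AF}. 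The main obstacle is the bookkeeping around the trace-one correction: the spurious $c_i I_c$ term in $\dot V_i$ looks threatening, but the projection built into $d\Gamma$ via Lemma \ref{lem:dGamma-1}, together with $\tr\mu_i = 1$ and the identity $\mb{T}_\rho^{-1}[I_c] = \rho$, orchestrates exactly the two cancellations needed to produce the replicator operator $\Rep_{\mu_i}$ on the nose.
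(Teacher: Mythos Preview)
Your proof is correct and follows essentially the same route as the paper: set $\mu = S(\rho)$, differentiate the explicit formula $\mu_i = \Gamma\bigl(\sum_k\w_{ik}(\logm\rho_k - D_k)\bigr)$ via Lemma~\ref{lem:dGamma-1} and \eqref{eq:def-mcT}, substitute $\dot\rho_k = \Rep_{\rho_k}[\mu_k]$, and use $\mb{T}_\rho^{-1}[I]=\rho$ together with the trace-one condition to collapse the scalar-times-identity terms into the replicator form. The only difference is organizational---you first compute $\dot V_i$ in full and then apply $d\Gamma$, whereas the paper carries everything through in one chain---and you additionally supply the converse direction via a uniqueness argument, which the paper leaves implicit.
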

\begin{proof}
	Appendix~\ref{sec:appendix-quantum-state-assignment-flow}.
\end{proof}
For the following, we adopt the \textit{symmetry assumption}
\begin{subequations}\label{eq:ass-symmetry}
\begin{gather}
\w_{ij}=\w_{ji},\qquad\forall i,j\in\mc{V}
\label{eq:ass-symmetry-om} \\ \label{eq:ass-symmetry-mcN}
j\in\mc{N}_{i}\quad\gdw\quad
i\in\mc{N}_{j},\qquad i,j\in\mc{V}.
\end{gather}
\end{subequations}
As a consequence, the mapping \eqref{eq:def-Omega-rho} is self-adjoint,
\begin{subequations}
\begin{align}
\la \mu,\Omega[\rho]\ra
&= \sum_{i\in\mc{V}} \la \mu_{i},\Omega[\rho]_{i}\ra
= \sum_{i\in\mc{V}} \sum_{k\in\mc{N}_{i}}\w_{ik}\la \mu_{i},\rho_{k}\ra
= \sum_{i\in\mc{V}} \sum_{k\in\mc{N}_{i}}\w_{ki}\la \mu_{i},\rho_{k}\ra
\\
&= \sum_{k\in\mc{V}}\sum_{i\in\mc{N}_{k}}\w_{ki}\la \mu_{i},\rho_{k}\ra
= \sum_{k\in\mc{N}_{i}}\la\Omega[\mu]_{k},\rho_{k}\ra
= \la\Omega[\mu],\rho\ra.
\end{align}
\end{subequations}
\begin{proposition}[\textbf{Riemannian gradient QSAF flow}]\label{prop:potentail-QSAF}
Suppose the mapping $\Omega[\cdot]$ given by \eqref{eq:def-Omega-rho} is self-adjoint with respect to the canonical matrix inner product. Then the solution $\mu(t)$ to \eqref{eq:S-flow-density-b} also solves
\begin{subequations}
\begin{align}\label{eq:dot-mu-grad}
\dot\mu 
&= -\ggrad_{\mu} J(\mu)
\qquad\text{with}\qquad
\big(\ggrad_{\mu} J(\mu)\big)_{i} 
= \ggrad_{\mu_{i}} J(\mu)
\intertext{with respect to the potential}\label{eq:dot-mu-potential}
J(\mu) &:= -\frac{1}{2}\la\mu,\Omega[\mu]\ra.
\end{align}
\end{subequations}
\end{proposition}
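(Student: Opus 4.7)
The plan is to show that the vector field $\Rep_\mu[\Omega[\mu]]$ appearing on the right-hand side of \eqref{eq:S-flow-density-b} coincides pointwise with $-\ggrad_\mu J(\mu)$, where the Riemannian gradient is taken with respect to the product BKM metric \eqref{eq:KM-metric-AMF}. Since the metric splits as a direct sum across the $|\mc{V}|$ factors, both the Euclidean and Riemannian gradients decompose componentwise, so the entire computation reduces to working on each factor $\mc{D}_{c}$ separately and then reassembling.

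First, I would compute the Euclidean (ambient) gradient $\partial \wt{J}$ of a smooth extension of $J$ to the ambient product space $\mc{H}_{c}\times\dotsb\times\mc{H}_{c}$. Writing out $J(\mu)=-\tfrac12 \sum_{i\in\mc{V}}\la \mu_{i},\Omega[\mu]_{i}\ra = -\tfrac12 \sum_{i,k}\w_{ik}\la \mu_{i},\mu_{k}\ra$ and using the self-adjointness hypothesis on $\Omega$, differentiation with respect to $\mu_{i}$ produces $\partial_{\mu_{i}}\wt{J}(\mu) = -\Omega[\mu]_{i}$, with the factor of $\tfrac12$ absorbed by the symmetry (this is the step that \emph{requires} the assumption; without it, only the symmetrized part of $\Omega$ would appear).

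Next, I would invoke Proposition~\ref{prop:grad-Dc} factorwise. Comparing formula \eqref{eq:grad-Dc} with the definition \eqref{eq:def-Rrho} of the replicator map gives the compact identity $\ggrad_{\rho} f = \Rep_{\rho}[\partial\wt{f}]$ on each factor $\mc{D}_{c}$. Applying this to $J$ yields
\begin{equation*}
\big(\ggrad_{\mu} J(\mu)\big)_{i}
= \Rep_{\mu_{i}}\big[\partial_{\mu_{i}}\wt{J}(\mu)\big]
= \Rep_{\mu_{i}}\big[-\Omega[\mu]_{i}\big]
= -\Rep_{\mu_{i}}\big[\Omega[\mu]_{i}\big],
\end{equation*}
where linearity of $\Rep_{\mu_{i}}$ (evident from \eqref{eq:def-Rrho}) is used in the last equality. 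By \eqref{eq:def-R-rho} this is exactly $-\big(\Rep_{\mu}[\Omega[\mu]]\big)_{i}$, so $-\ggrad_{\mu} J(\mu) = \Rep_{\mu}[\Omega[\mu]]$, which is the vector field on the right-hand side of \eqref{eq:S-flow-density-b}. Any solution $\mu(t)$ of \eqref{eq:S-flow-density-b} therefore satisfies \eqref{eq:dot-mu-grad}.

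The argument is essentially formal once the bookkeeping is correct; there is no substantial analytic obstacle. The two points that deserve care are (i) that the Euclidean gradient of the bilinear form $\la\mu,\Omega[\mu]\ra$ equals $2\Omega[\mu]$ only because $\Omega$ is self-adjoint, and (ii) that Proposition~\ref{prop:grad-Dc} genuinely carries over factorwise to the product manifold $(\mc{Q}_{c},g)$, which is immediate from the block-diagonal structure of \eqref{eq:KM-metric-AMF}. Both are routine to verify, and no further ingredients beyond Proposition~\ref{prop:grad-Dc} and the definition of $\Rep_{\rho}$ are needed.
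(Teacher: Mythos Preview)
Your proposal is correct and follows essentially the same route as the paper: compute the Euclidean partial gradient $\partial_{\mu_{i}}J(\mu)=-\Omega[\mu]_{i}$ using self-adjointness of $\Omega$, then apply Proposition~\ref{prop:grad-Dc} (equivalently $\ggrad_{\rho}f=\Rep_{\rho}[\partial\wt f]$) factorwise to obtain $-\ggrad_{\mu_{i}}J(\mu)=\Rep_{\mu_{i}}[\Omega[\mu]_{i}]$. The paper's own proof is slightly terser but identical in substance, and your additional remarks about the product structure of the metric and the necessity of the self-adjointness hypothesis are accurate.
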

\begin{proof}
	Appendix~\ref{sec:appendix-quantum-state-assignment-flow}.
\end{proof}
We conclude this section by rewriting the potential in a more explicit, informative form.
\begin{proposition}[\textbf{nonconvex potential}]\label{prop:potentail-QSAF-laplacian}
Define
\begin{equation}
L_{\mc{G}}\colon\mc{Q}_{c}\to\mc{Q}_{c},\qquad
L_{\mc{G}} := \mrm{id}-\Omega
\end{equation}
with $\Omega$ given by \eqref{eq:def-Omega-rho}. 
Then the potential \eqref{eq:dot-mu-potential} can be rewritten as
\begin{subequations}
\begin{align}
J(\mu) &= \frac{1}{2}\big(\la\mu,L_{\mc{G}}[\mu]\ra - \|\mu\|^{2}\big)
\\
&= \frac{1}{4}\sum_{i\in\mc{V}}\sum_{j\in\mc{N}_{i}}\w_{ij}\|\mu_{i}-\mu_{j}\|^{2}-\frac{1}{2}\|\mu\|^{2}.
\end{align}
\end{subequations}
\end{proposition}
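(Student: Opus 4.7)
The claim is essentially a bookkeeping identity; the proof will consist of two short computations.

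\textbf{Step 1: First equality.} I would start from the definition $L_{\mc{G}} = \mrm{id} - \Omega$, so that $\Omega[\mu] = \mu - L_{\mc{G}}[\mu]$. Taking the inner product with $\mu$ and substituting into the potential \eqref{eq:dot-mu-potential} gives
\begin{equation*}
J(\mu) = -\tfrac{1}{2}\la \mu, \Omega[\mu]\ra = -\tfrac{1}{2}\big(\|\mu\|^{2} - \la \mu, L_{\mc{G}}[\mu]\ra\big) = \tfrac{1}{2}\big(\la\mu, L_{\mc{G}}[\mu]\ra - \|\mu\|^{2}\big),
\end{equation*}
which is the first displayed equality.

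\textbf{Step 2: Rewriting $\la\mu, L_{\mc{G}}[\mu]\ra$ as a Dirichlet-type sum.} Using factorwise inner products and the definition of $\Omega$ in \eqref{eq:def-Omega-rho}, together with the normalization \eqref{eq:omega-sum-1}, I would write
\begin{equation*}
\la\mu, L_{\mc{G}}[\mu]\ra = \sum_{i\in\mc{V}} \Big\la \mu_{i},\, \mu_{i} - \sum_{k\in\mc{N}_{i}}\w_{ik}\mu_{k}\Big\ra = \sum_{i\in\mc{V}}\sum_{k\in\mc{N}_{i}}\w_{ik}\,\la\mu_{i}, \mu_{i}-\mu_{k}\ra,
\end{equation*}
where in the second step I replaced $\mu_{i}$ by $\sum_{k\in\mc{N}_{i}}\w_{ik}\mu_{i}$ using $\sum_{k\in\mc{N}_{i}}\w_{ik}=1$. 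Now I would invoke the symmetry assumption \eqref{eq:ass-symmetry}: since $\w_{ik}=\w_{ki}$ and $k\in\mc{N}_{i}\Leftrightarrow i\in\mc{N}_{k}$, swapping the labels $i\leftrightarrow k$ in the double sum gives the same sum with $\la\mu_{k},\mu_{k}-\mu_{i}\ra$ in place of $\la\mu_{i},\mu_{i}-\mu_{k}\ra$. Averaging the two representations and using $\la\mu_{i}-\mu_{k}, \mu_{i}-\mu_{k}\ra = \|\mu_{i}-\mu_{k}\|^{2}$ yields
\begin{equation*}
\la\mu, L_{\mc{G}}[\mu]\ra = \tfrac{1}{2}\sum_{i\in\mc{V}}\sum_{j\in\mc{N}_{i}}\w_{ij}\,\|\mu_{i}-\mu_{j}\|^{2}.
\end{equation*}
Substituting this into the expression from Step 1 produces the second claimed identity.

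\textbf{Expected difficulty.} There is no real obstacle here; the only point that requires care is the symmetrization step, where one must explicitly use both parts of the assumption \eqref{eq:ass-symmetry} (symmetry of the weight $\w$ and of the neighborhood relation) to justify the index swap. Everything else is algebraic manipulation in the Euclidean inner product $\la\cdot,\cdot\ra$ on the ambient space of $\mc{Q}_{c}$.
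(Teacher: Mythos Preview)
Your proposal is correct and follows essentially the same approach as the paper: first rewrite $-\tfrac{1}{2}\la\mu,\Omega[\mu]\ra$ using $\Omega=\mrm{id}-L_{\mc{G}}$, then expand $\la\mu,L_{\mc{G}}[\mu]\ra$ factorwise, use the normalization $\sum_{k\in\mc{N}_{i}}\w_{ik}=1$, and symmetrize via \eqref{eq:ass-symmetry} to obtain the Dirichlet form. The paper's proof is line-by-line the same argument.
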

\begin{proof}
	Appendix~\ref{sec:appendix-quantum-state-assignment-flow}.
\end{proof}

\subsection{Recovering the Assignment Flow for Categorial Distributions}\label{sec:recovering-af-cat-distr}

In the following we show how the assignment flow \eqref{eq:S-AF} for categorial distributions arises as special case of the quantum state assignment flow, under suitable conditions as detailed below.
\begin{definition}[\textbf{commutative submanifold}]\label{def:mcD-Pi}
Let 
\begin{equation}\label{eq:def-Pi-projectors}
\Pi=\{\pi_{i}\colon i\in[l]\},\qquad
l \leq c
\end{equation}
denote a set of operators which orthogonally project onto disjoint subspaces of $\mathbb{C}^{c}$,
\begin{subequations}
\begin{align}
\pi_{i}^{2} &= \pi_{i},\quad\forall i\in[l], \\
\pi_{i}\pi_{j} &= 0,\quad\forall i,j\in[l],\; i\neq j,
\end{align}
\end{subequations}
and which are complete in the sense that
\begin{equation}
\sum_{i\in[l]}\pi_{i} = I_{c}.
\end{equation}
Given a family $\Pi$ of operators, we define by
\begin{equation}\label{eq:def-mcD-Pi}
\mc{D}_{\Pi} := \bigg\{\sum_{i\in[l]}\frac{p_{i}}{\tr\pi_{i}}\pi_{i}\colon p\in\mc{S}_{l}\bigg\} \subset \mc{D}_{c}
\end{equation}
the \textit{submanifold of commuting Hermitian matrices} which can be diagonalized simultaneously.
\end{definition}
A typical example for a family \eqref{eq:def-Pi-projectors} is 
\begin{equation}\label{eq:Pi-ONB}
\Pi_{\mc{U}} = \{\pi_{i}=u_{i}u_{i}^{\ast}\colon i\in[c]\},
\end{equation}
where $\mc{U} = \{u_{1},\dotsc,u_{c}\}$ is an orthonormal basis of $\mathbb{C}^{c}$. The following lemma elaborates the bijection $D_{\Pi}\leftrightarrow\mc{S}_{l}$. 
\begin{lemma}[\textbf{properties of $\mc{D}_{\Pi}$}]
\label{lem:prop_of_d_pi}
Let $\mc{D}_{\Pi}\subset\mc{D}_{c}$ be given by \eqref{eq:def-mcD-Pi} and denote the corresponding inclusion map by $\iota\colon \mc{D}_{\Pi}\embedded\mc{D}_{c}$. Then
\begin{enumerate}[(a)]
\item the submanifold $(\mc{D}_{\Pi},\iota^{\ast}g_{\sst{\mrm{BKM}}})$ with the induced BKM metric is isometric to $(\mc{S}_{l},g_{\sst{\mrm{FR}}})$;
%
\item if $\mu\in\mc{D}_{\Pi}$, then the tangent subspace $T_{\mu}\mc{D}_{\Pi}$ is contained in the subspace $T_{\mu}^{c}\mc{D}_{c} \subseteq T_\mu \mathcal D_c$ defined by \eqref{eq:decomp_H_c0-b}.
\item Let $\mc{U} = \{ u_{1},\dots,u_{c} \}$ denote an orthonormal basis of $\mathbb{C}^{c}$ such that for every $\pi_{i} \in \Pi,\;i\in[l]$, there are $u_{i_{1}},\dots,u_{i_{k}} \in \mc{U}$ that form a basis of $~\mathrm{range}(\pi_{i})$. Then there is an inclusion of commutative subsets $\mathcal{D}_{\Pi} \hookrightarrow \mathcal{D}_{\Pi_{\mc{U}}}$ that corresponds to an inclusion $\mathcal{S}_{l} \hookrightarrow \mathcal{S}_{c}$.
\end{enumerate}
\end{lemma}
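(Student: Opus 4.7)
The entire lemma organizes around the explicit parametrization map $\Phi \colon \mc{S}_{l} \to \mc{D}_{\Pi}$, $\Phi(p) := \sum_{i\in[l]} \frac{p_{i}}{\tr\pi_{i}}\pi_{i}$, which is a smooth bijection whose differential reads $d\Phi(p)[u] = \sum_{i\in[l]} \frac{u_{i}}{\tr\pi_{i}}\pi_{i}$ for $u\in T_{l,0}$. A short trace check using $\sum_{i}\pi_{i}=I_{c}$ confirms $d\Phi(p)[u]\in\mc{H}_{c,0}$, so $\Phi$ is a diffeomorphism onto $\mc{D}_{\Pi}$. All three parts then reduce to spectral computations against $\Phi$.

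For part (a), since the projectors in $\Pi$ pairwise commute and sum to $I_{c}$, every matrix in $\mc{D}_{\Pi}$ and every tangent vector at it can be simultaneously diagonalized in a common orthonormal basis $\mc{U}$; concretely, $\rho=\Phi(p)$ has eigenvalue $p_{i}/\tr\pi_{i}$ with multiplicity $\tr\pi_{i}$, and $X=d\Phi(p)[u]$ has eigenvalue $u_{i}/\tr\pi_{i}$ on the same $\tr\pi_{i}$-dimensional eigenspace. In this shared basis the BKM integrand \eqref{eq:Bogoliubov} decouples and $\int_{0}^{\infty}(\lambda_{k}+\lambda)^{-2}d\lambda=1/\lambda_{k}$ yields
\begin{equation*}
g_{\rho}(X,X) = \sum_{i\in[l]} \tr\pi_{i}\cdot\frac{(u_{i}/\tr\pi_{i})^{2}}{p_{i}/\tr\pi_{i}} = \sum_{i\in[l]}\frac{u_{i}^{2}}{p_{i}} = g_{p}(u,u),
\end{equation*}
matching \eqref{eq:def-gp}. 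Polarization extends this to arbitrary $u,v\in T_{l,0}$, establishing the isometry $(\Phi,\iota^{\ast}g_{\sst{\mrm{BKM}}})\cong (\mrm{id},g_{\sst{\mrm{FR}}})$.

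For part (b), the image $X=d\Phi(p)[u] \in T_\mu \mc{D}_\Pi$ is a linear combination of the $\pi_{i}$, and $\mu=\Phi(p)$ is another such combination; since $\pi_{i}\pi_{j}=\delta_{ij}\pi_{i}$, both matrices commute and hence $[\mu,X]=0$, placing $X$ in $T_{\mu}^{c}\mc{D}_{c}$ by definition \eqref{eq:decomp_H_c0-b}. For part (c), the hypothesis that each $\pi_{i}$ is the sum of those $u_{k}u_{k}^{\ast}\in\Pi_{\mc{U}}$ whose indices $k$ lie in a block $B_{i}\subset[c]$ (a partition of $[c]$ with $|B_{i}|=\tr\pi_{i}$) allows rewriting
\begin{equation*}
\Phi(p) = \sum_{i\in[l]}\frac{p_{i}}{\tr\pi_{i}}\sum_{k\in B_{i}}u_{k}u_{k}^{\ast} = \sum_{k\in[c]} q_{k}\, u_{k}u_{k}^{\ast},\qquad q_{k}:=\frac{p_{i}}{\tr\pi_{i}}\ \text{ for }\ k\in B_{i}.
\end{equation*}
The resulting vector $q$ lies in $\mc{S}_{c}$ because $\sum_{k\in[c]} q_{k} = \sum_{i\in[l]} |B_{i}|\cdot p_{i}/\tr\pi_{i} = \sum_{i}p_{i}=1$, and positivity is inherited from $p\in\mc{S}_{l}$. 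This defines the smooth inclusion $\mc{D}_{\Pi}\embedded\mc{D}_{\Pi_{\mc{U}}}$ corresponding under the parametrization in part (a) to the block-constant inclusion $\mc{S}_{l}\embedded\mc{S}_{c}$, $p\mapsto q$.

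The main obstacle is part (a): bookkeeping the eigenvalue multiplicities $\tr\pi_{i}$ versus the rescaling $1/\tr\pi_{i}$ in $\Phi$ and $d\Phi$ must be tracked carefully so that the BKM integral collapses to the Fisher-Rao form without stray multiplicity factors. The remaining parts are immediate consequences of the commutativity of $\Pi$ and the combinatorial refinement $\Pi_\mc{U}$ of $\Pi$.
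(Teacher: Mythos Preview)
Your proposal is correct and follows essentially the same approach as the paper: both arguments diagonalize $\mu$ and its tangent vectors simultaneously via the orthogonal projectors $\pi_i$, evaluate the BKM integral eigenvalue-by-eigenvalue (tracking the multiplicities $\tr\pi_i$) to recover the Fisher--Rao form for part~(a), deduce~(b) from the fact that tangent vectors are linear combinations of the same projectors, and build~(c) from the block decomposition $\pi_i = \sum_{k\in B_i} u_k u_k^{\ast}$. The only cosmetic differences are that the paper defines its bijection $\Phi_\Pi$ in the opposite direction $\mc{D}_\Pi\to\mc{S}_l$, computes the bilinear form $g_\rho(X,Y)$ directly rather than the quadratic form plus polarization, and argues~(b) via a curve representation rather than your more direct algebraic commutator check.
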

\begin{proof}
Appendix~\ref{sec:appendix-quantum-state-assignment-flow}. 
\end{proof}
Now we establish that a restriction of the QSAF equation \eqref{eq:S-flow-density-b} to the commutative product submanifold can be expressed in terms of the AF equation \eqref{eq:S-AF}. Analogous to the definition \eqref{eq:def-mcQ} of the product manifold $\mc{Q}_{c}$, we set 
\begin{equation} 
\mc{D}_{\Pi,c}
= \underbrace{\mc{D}_{\Pi}\times\dotsb\times\mc{D}_{\Pi}}_{|\mc{V}|\;\text{factors}}.
\end{equation}
If $\Pi$ is given by an orthonormal basis as in \eqref{eq:Pi-ONB}, we define the unitary matrices
\begin{subequations}
\begin{align}
U &= (u_{1},\dotsc,u_{c}) \in\LG{Un}(c), \\
U_{c} &= \underbrace{\Diag(U,\dotsc,U)}_{|\mc{V}|\;\text{block-diagonal entries}}.
\end{align}
\end{subequations}
\begin{proposition}[\textbf{invariance of $\mc{D}_{\Pi,c}$}]\label{prop:commutativity-preservation}
Let $\Pi$ and $D_{\Pi}$ be given according to Definition \ref{def:mcD-Pi}. Then the following holds.
\begin{enumerate}[(i)]
%
\item 
If $\mu\in\mc{D}_{\Pi,c}\subset\mc{Q}_{c}$, then $\Rep\big[\Omega[\mu]\big]\in T_{\mu}\mc{D}_{\Pi,c}\subseteq T_{\mu}\mc{Q}_{c}$.
\item
If $\Pi_{\mc{U}}$ has the form \eqref{eq:Pi-ONB}, then
\begin{equation}
\Rep\big[\Omega[\mu]\big]
= U_{c}\Diag\big[R_{S}[\Omega S]\big] U_{c}^{\ast},
\end{equation}
where $S\in\mc{W}_{c}$ is determined by $\mu_{i}=U\Diag(S_{i}) U^{\ast},\; i\in\mc{V}$.
\end{enumerate}
In particular, the submanifold $\mathcal D_{\Pi,c}$ is preserved by the quantum state assignment flow.
\end{proposition}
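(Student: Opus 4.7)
The plan is to treat (i) by exploiting simultaneous diagonalizability, then derive (ii) via a direct computation in the $U$-basis, and conclude the invariance statement by the standard tangent-vector argument.

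For (i), fix $\mu\in\mc{D}_{\Pi,c}$ and write each factor in canonical form $\mu_{i}=\sum_{j\in[l]}\frac{(p_{i})_{j}}{\tr\pi_{j}}\pi_{j}$ with $p_{i}\in\mc{S}_{l}$. Since every $\mu_{k}$ lies in the real commutative subalgebra generated by the projectors $\{\pi_{j}\}$, the weighted average $\Omega[\mu]_{i}=\sum_{k\in\mc{N}_{i}}\w_{ik}\mu_{k}$ lies in the same subalgebra and, in particular, commutes with $\mu_{i}$. The crucial consequence is that the integral defining $\mb{T}_{\mu_{i}}^{-1}$ in \eqref{eq:def-mcT-inverse} collapses: under $[\mu_{i},\Omega[\mu]_{i}]=0$ one has $\mu_{i}^{1-\lambda}\Omega[\mu]_{i}\mu_{i}^{\lambda}=\mu_{i}\Omega[\mu]_{i}$ for every $\lambda\in[0,1]$, whence
\begin{equation*}
\mb{T}_{\mu_{i}}^{-1}\big[\Omega[\mu]_{i}\big]=\mu_{i}\,\Omega[\mu]_{i}
\qquad\Longrightarrow\qquad
\Rep_{\mu_{i}}\big[\Omega[\mu]_{i}\big]=\mu_{i}\,\Omega[\mu]_{i}-\la\mu_{i},\Omega[\mu]_{i}\ra\mu_{i}.
\end{equation*}
Using $\pi_{j}\pi_{m}=\delta_{jm}\pi_{j}$ one checks that $\mu_{i}\,\Omega[\mu]_{i}$ still lies in the real span of $\Pi$, so the full expression does too; it is moreover traceless by Lemma \ref{lem:repl-proj-rho-commute}. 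Via the isometric identification in Lemma \ref{lem:prop_of_d_pi}(a), such a traceless Hermitian matrix in $\Span\Pi$ is precisely a tangent vector of $\mc{D}_{\Pi}$ at $\mu_{i}$, which proves (i) after assembling the components $i\in\mc{V}$.

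For (ii), specialize to $\Pi=\Pi_{\mc{U}}$ so that $\mu_{i}=U\Diag(S_{i})U^{\ast}$ with $S_{i}\in\mc{S}_{c}$. Because $U^{\ast}U=I_{c}$ and $\Diag(a)\Diag(b)=\Diag(a\cdot b)$, a direct substitution gives
\begin{equation*}
\Omega[\mu]_{i}=U\Diag\big((\Omega S)_{i}\big)U^{\ast},\qquad
\mu_{i}\Omega[\mu]_{i}=U\Diag\big(S_{i}\cdot(\Omega S)_{i}\big)U^{\ast},\qquad
\la\mu_{i},\Omega[\mu]_{i}\ra=\la S_{i},(\Omega S)_{i}\ra.
\end{equation*}
Inserting these into the simplified formula from (i) collapses the replicator expression to
\begin{equation*}
\Rep_{\mu_{i}}\big[\Omega[\mu]_{i}\big]=U\Diag\big(R_{S_{i}}(\Omega S)_{i}\big)U^{\ast},
\end{equation*}
which by definition of the block-diagonal $U_{c}$ and the factorwise conventions \eqref{eq:def-replicator-mcW} and \eqref{eq:def-R-rho} is precisely the $i$-th block of $U_{c}\Diag[R_{S}[\Omega S]]U_{c}^{\ast}$.

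Invariance of $\mc{D}_{\Pi,c}$ under the flow then follows at once from (i) by Picard--Lindel\"of: the vector field $\mu\mapsto\Rep_{\mu}[\Omega[\mu]]$, restricted to $\mc{D}_{\Pi,c}$, takes values in $T\mc{D}_{\Pi,c}$, so the unique maximal solution of \eqref{eq:S-flow-density-b} starting in $\mc{D}_{\Pi,c}$ remains in $\mc{D}_{\Pi,c}$ for all times. The only genuinely nontrivial step in the argument is the collapse of the integral \eqref{eq:def-mcT-inverse} under commutativity; everything else is straightforward bookkeeping via the two structural lemmas on $\mc{D}_{\Pi}$.
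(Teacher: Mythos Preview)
Your proof is correct and follows essentially the same route as the paper's: both arguments hinge on the collapse of the integral in \eqref{eq:def-mcT-inverse} under commutativity, yielding $\Rep_{\mu_{i}}[\Omega[\mu]_{i}]=\mu_{i}\Omega[\mu]_{i}-\la\mu_{i},\Omega[\mu]_{i}\ra\mu_{i}$, followed by a direct computation in the $U$-basis for (ii). The only cosmetic difference is that the paper verifies tangency in (i) by computing the coefficients $x_{r}$ of $\Rep_{\mu_{i}}[\Omega[\mu]_{i}]$ in the $\pi_{r}$-expansion explicitly and checking $\sum_{r}x_{r}=0$, whereas you argue more abstractly that the result lies in the real span of $\Pi$ and is traceless (the latter by Lemma~\ref{lem:repl-proj-rho-commute}); your reference for this identification should really be to the tangent-vector description in the proof of Lemma~\ref{lem:prop_of_d_pi}(b) rather than to the isometry statement (a), but the content is the same.
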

\begin{proof}
	Appendix~\ref{sec:appendix-quantum-state-assignment-flow}.
\end{proof}
It remains to check that under suitable conditions on the data matrices $D_{i},\;i\in\mc{V}$ which define the initial point of \eqref{eq:S-flow-density-b} by the similarity mapping (Lemma \ref{lem:Si-rho-explicit}), the quantum state assignment flow becomes the ordinary assignment flow.
\begin{corollary}[\textbf{recovery of the AF by restriction}]\label{cor:af_leq_qaf}
In the situation of Proposition \ref{prop:commutativity-preservation}, assume that all data matrices $D_{i},\;i\in\mc{V}$ become diagonal in the same basis $\mc{U}$, i.e.
\begin{equation}
D_{i} = U\Diag(\lambda_{i}) U^{\ast},\quad\lambda_{i}\in\R^{c},\quad i\in\mc{V}.
\end{equation}
Then the solution of the QSAF
\begin{equation}
\dot\mu = \Rep_{\mu}\big[\Omega[\mu]\big],\quad
\mu(0)=S(\eins_{\mc{Q}_{c}})
\end{equation}
is given by 
\begin{equation}
\mu_{i}(t) = U\Diag\big(S_{i}(t)\big) U^{\ast},\quad i\in\mc{V},
\end{equation}
where $S(t)$ satisfies the ordinary AF equation
\begin{equation}
\dot S = R_{S}[\Omega S],\quad S(0)=S(\eins_{\mc{W}_{c}}),
\end{equation}
and the initial point is determined by the similarity map \eqref{eq:def-Si} evaluated at the barycenter $W=\eins_{\mc{W}_{c}}$ with the vectors $\lambda_{i},\,i\in\mc{V}$ as data points.
\end{corollary}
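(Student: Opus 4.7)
The plan is to combine the invariance statement Proposition~\ref{prop:commutativity-preservation} with a direct computation of the initial point. The key observation is that the simultaneous-diagonalizability assumption on the data matrices forces $\mu(0)=S(\eins_{\mc{Q}_{c}})$ into the commutative submanifold $\mc{D}_{\Pi_{\mc{U}},c}$; invariance of this submanifold under the flow and the explicit reduction formula then finish the proof almost automatically.

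First I would compute $S_{i}(\eins_{\mc{Q}_{c}})$ using the explicit form of the similarity map from Lemma~\ref{lem:Si-rho-explicit}. Since $\eins_{\mc{D}_{c}}=\tfrac{1}{c}I_{c}$ one has $\logm\eins_{\mc{D}_{c}}=-\log(c)I_{c}$, and together with $\sum_{k\in\mc{N}_{i}}\w_{ik}=1$ and $D_{k}=U\Diag(\lambda_{k})U^{\ast}$, the argument of $\Gamma$ becomes
\[
\sum_{k\in\mc{N}_{i}}\w_{ik}\bigl(\logm\eins_{\mc{D}_{c}}-D_{k}\bigr)
= -\log(c)\,I_{c} - U\Diag\Bigl(\sum_{k\in\mc{N}_{i}}\w_{ik}\lambda_{k}\Bigr)U^{\ast}.
\]
Since $\Gamma$ is defined via $\expm$ followed by trace normalization, it preserves the eigenbasis $U$, and the scalar term $-\log(c)I_{c}$ cancels upon normalization. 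This yields $S_{i}(\eins_{\mc{Q}_{c}})=U\Diag(s_{i}^{0})U^{\ast}$ for some $s_{i}^{0}\in\mc{S}_{c}$; in particular $\mu(0)\in\mc{D}_{\Pi_{\mc{U}},c}$.

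Second, I would invoke Proposition~\ref{prop:commutativity-preservation}(i) to conclude that $\mc{D}_{\Pi_{\mc{U}},c}$ is forward-invariant under the QSAF, so the solution admits the representation $\mu_{i}(t)=U\Diag(S_{i}(t))U^{\ast}$ for a curve $S(t)\in\mc{W}_{c}$ defined for all $t\geq 0$. Applying Proposition~\ref{prop:commutativity-preservation}(ii) to the right-hand side of \eqref{eq:S-flow-density-b} gives
\[
\Rep_{\mu(t)}\bigl[\Omega[\mu(t)]\bigr]
= U_{c}\Diag\bigl[R_{S(t)}[\Omega S(t)]\bigr]U_{c}^{\ast},
\]
and reading off diagonal entries shows that $S(t)$ solves $\dot S=R_{S}[\Omega S]$.

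Finally I would verify that $s_{i}^{0}$ from the first step coincides with $S_{i}(\eins_{\mc{W}_{c}})$ evaluated on the data points $\lambda_{i}$. By the computation in the first step, $s_{i}^{0}$ is proportional to $\exp\!\bigl(-\sum_{k\in\mc{N}_{i}}\w_{ik}\lambda_{k}\bigr)$, normalized to sum to $1$. On the other hand, unrolling the definition \eqref{eq:def-Si} of the AF similarity map at the barycenter using \eqref{eq:def_exp_standard_af} and the fact that the inverse exponential at $\eins_{\mc{S}_{c}}$ is the centered logarithm produces precisely the same normalized exponential. The main (and essentially the only nontrivial) obstacle is the careful bookkeeping of this last initial-condition comparison, ensuring that the $\Gamma$-parametrized QSAF at the barycenter reduces exactly to the $\exp_{p}$-parametrized AF similarity map; once this matching is established, uniqueness of solutions to the ODEs gives the claimed identity $\mu_{i}(t)=U\Diag(S_{i}(t))U^{\ast}$.
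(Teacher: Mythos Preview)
Your proposal is correct and follows essentially the same route as the paper: compute the initial point $S_{i}(\eins_{\mc{Q}_{c}})$ via Lemma~\ref{lem:Si-rho-explicit} to see it lies in $\mc{D}_{\Pi_{\mc{U}},c}$, then apply Proposition~\ref{prop:commutativity-preservation} to reduce the QSAF vector field to $U\Diag(R_{S}[\Omega S])U^{\ast}$ and read off the classical equation. If anything, you are slightly more explicit than the paper in separating the invariance step (i) from the reduction formula (ii) and in spelling out the match between $s_{i}^{0}$ and $S_{i}(\eins_{\mc{W}_{c}})$, which the paper handles in one line.
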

\begin{proof}
	Appendix~\ref{sec:appendix-quantum-state-assignment-flow}.
\end{proof}


\section{Experiments and Discussion}\label{sec:Experiments}

We report in this section few academical experiments in order to illustrate the novel approach. In comparison to the original formulation, it enables a continuous assignment without the need to specify explicitly prototypical labels beforehand. The experiments highlight the following properties of the novel approach which extend the expressivity of the original assignment flow approach:
\begin{itemize}
\item geometric \textit{adaptive} feature vector averaging even when \textit{uniform} weights are used (Section \ref{sec:Bloch-sphere-averaging});
\item structure-preserving feature \textit{patch} smoothing \textit{without} accessing data at individual \textit{pixels}  (Section \ref{sec:patch-smoothing});
\item seamless incorporation of feature \textit{encoding} using finite \textit{frames} (Section \ref{sec:patch-smoothing}).
\end{itemize}
In Section \ref{sec:Conclusion}, we indicate the potential for representing spatial feature \textit{context} via entanglement. 
Working out more thoroughly the potential for various applications is beyond the scope of this paper, however.

\subsection{Geometric Integration}

In this section, we focus on the geometric integration of the reparameterized flow described by Equation~\eqref{eq:S-flow-density-b}. For a reasonable choice of a single stepsize parameter, the scheme is accurate, stable and amenable to highly parallel implementations.

We utilize that the e-geodesic from Proposition~\ref{prop:e-geodesic} constitutes a retraction \cite[Def.~4.1.1 and Prop.~5.4.1]{Absil:2008aa} onto the state manifold $\mc{Q}_{c}$.

Consequently, the iterative step for updating $\mu_t \in \mathcal{Q}_{c},\; t\in\N_{0}$ and stepsize $\epsilon > 0$ is given by

\begin{subequations}
\begin{align}\label{eq:update-step1}
    (\mu_{t+1})_i   &= \Big(\Exp_{\mu_t}^{(e)}\big(\epsilon\,  \mathfrak{R}_{\mu_t} \big[ \Omega[\mu_t] \big]\big)\Big)_i = \big(\Exp_{(\mu_t)_{i}}^{(e)} \circ \mathfrak{R}_{(\mu_t)_i} \big[ \epsilon (\Omega [\mu_t])_i \big]\big) \\
                &\overset{\eqref{eq:def-exp-rho-a}}{=} \exp_{(\mu_t)_i} \big(\epsilon (\Omega[\mu_t])_i\big),\quad\forall i\in\mc{V}.
\end{align}
\end{subequations}
for all $i \in \mc{V}$. 
Using \eqref{eq:def-exp-rho-b} and assuming  
\begin{equation}\label{eq:mu-by-Gamma}
(\mu_t)_i = \Gamma\big((A_t)_i\big),\quad \forall i \in \mc{V},
\end{equation} 
with $A_t \in \mc{T}_{c}$, we obtain
\begin{subequations}\label{eq:dot_Gamma_A}
\begin{align}
     \Gamma\big((A_{t+1})_i \big)    
     &:= \exp_{(\mu_t)_i} \big(\epsilon (\Omega[\mu_t])_i\big) \\
                            &= \Gamma \big( \Gamma^{-1}((\mu_t)_i) + \epsilon (\Omega[\mu_t])_i \big)\\
                            &= \Gamma \big( \Gamma^{-1} \circ \Gamma ((A_t)_i) + \epsilon (\Omega[\mu_t])_i \big)\\
                            &= \Gamma \big( (A_t)_i + \epsilon  (\Omega[\mu_t])_i \big),\quad i\in\mc{V},
\end{align}
\end{subequations}
and we conclude in view of \eqref{eq:Gamma-Pi0} and \eqref{eq:mu-by-Gamma}
\begin{equation}\label{eq:QSAF-S-update}
    A_{t+1} = A_t + \epsilon \Pi_{c,0} \Omega[\Gamma(A_t)].
\end{equation}

\begin{remark}
We note that the numerical evaluation of the replicator operator \eqref{eq:def-Rrho} is not required. This makes the geometric integration scheme, summarized by Algorithm~\ref{alg:QSAF-S-geometric-integration}, quite efficient.
\end{remark}

\begin{figure}[ht]
\centerline{
\includegraphics[width=0.24\textwidth]{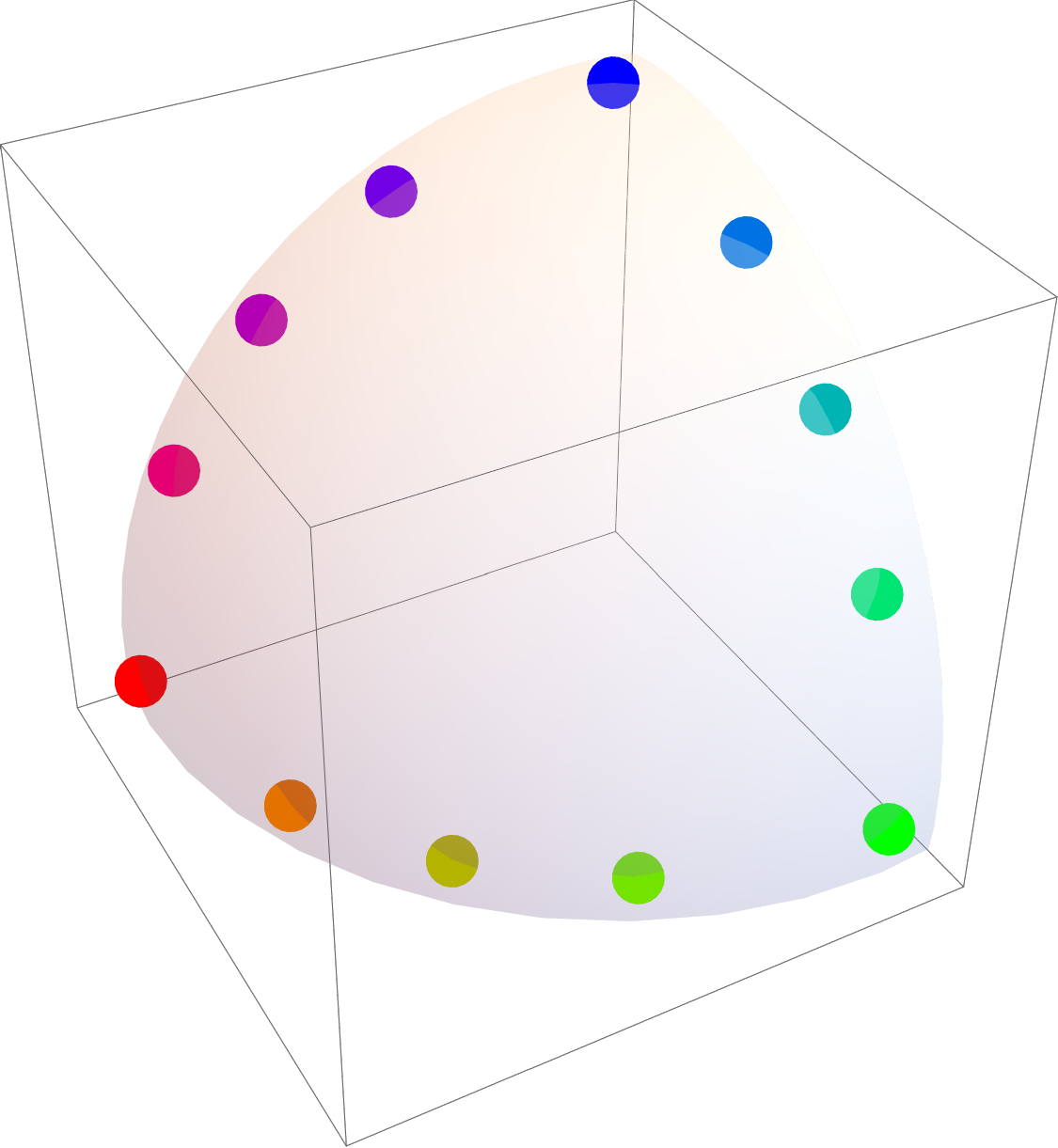}\hfill
\includegraphics[width=0.24\textwidth]{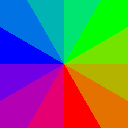}\hfill
\includegraphics[width=0.24\textwidth]{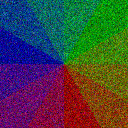}\hfill
\includegraphics[width=0.24\textwidth]{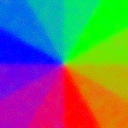}\hfill
}
\centerline{
\parbox{0.24\textwidth}{\centering (a)}\hfill
\parbox{0.24\textwidth}{\centering (b)}\hfill
\parbox{0.24\textwidth}{\centering (c)}\hfill
\parbox{0.24\textwidth}{\centering (d)}
} 
\centerline{
\includegraphics[width=0.24\textwidth]{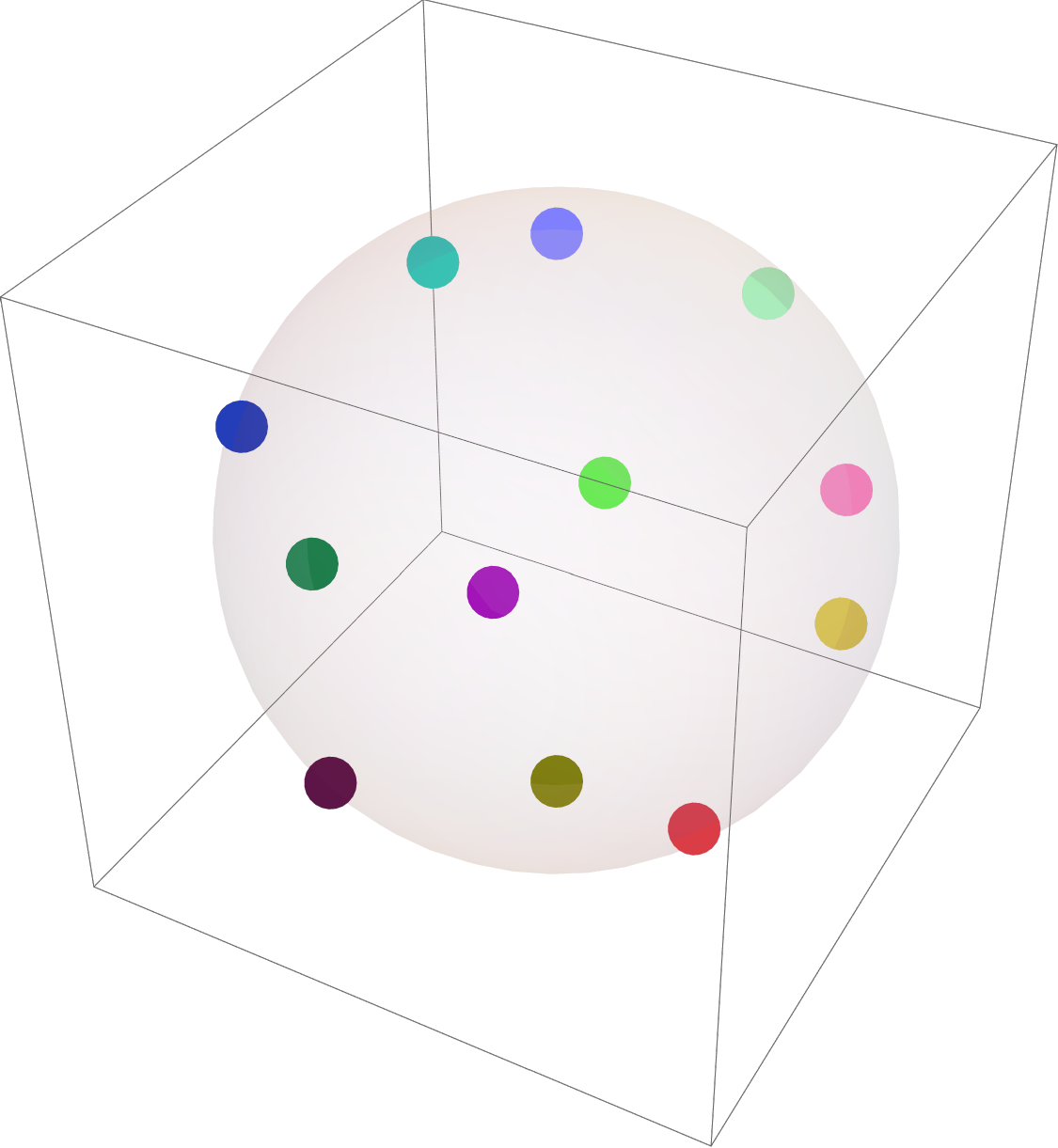}\hfill
\includegraphics[width=0.24\textwidth]{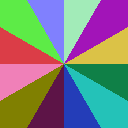}\hfill
\includegraphics[width=0.24\textwidth]{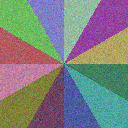}\hfill
\includegraphics[width=0.24\textwidth]{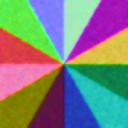}\hfill
}
\centerline{
\parbox{0.24\textwidth}{\centering (e)}\hfill
\parbox{0.24\textwidth}{\centering (f)}\hfill
\parbox{0.24\textwidth}{\centering (g)}\hfill
\parbox{0.24\textwidth}{\centering (h)}
} 
\caption{
\textbf{(a)} A range of RGB unit color vectors in the positive orthant. \textbf{(b)} An image with data according to (a). \textbf{(c)} A noisy version of (b) constituting the initial points $\rho_{i}(0),\;i\in\mc{V}$ of the QSAF. \textbf{(d)} The labels (pure states) generated by integrating the quantum state assignment flow using uniform weights. \textbf{(e)} The vectors depicted by (a) are replaced by the unit vectors corresponding to the vertices of the icosahedron, centered at $0$. \textbf{(f)-(h)} Analogous to (b)-(d), based on (e) instead of (a) and using the same noise level in (g). The colors in (f)-(h) merely visualize the bloch vectors by RGB vectors that result from translating the sphere of (e) to the center $\frac{1}{2}(1,1,1)^{\T}$ of the RGB cube and scaling it by $\frac{1}{2}$. We refer to the text for a discussion. 
}
\label{fig:12-colors}
\end{figure}

\begin{algorithm}[H]
    \textbf{Initialization} \\
        Determine an initial $A_{0} \in \mc{T}_{c;0}$ and compute $\mu_{0}$ by $(\mu_{0})_i = \Gamma((A_{0})_i) \in \mc{Q}_{c},\;\forall i \in \mc{V}$\\
    \While{
			\textbf{not converged}
    }{
$
(A_{t+1})_i = (A_{t})_i + \epsilon \Pi_{c,0} (\Omega[\mu_{t}])_i \quad \forall i \in \mc{V}
$
\\
$
(\mu_{t+1})_i = \Gamma\big((A_{t+1})_i\big), \quad \forall i \in \mc{V}
$.
        }
    \caption{Geometric Integration Scheme}
    \label{alg:QSAF-S-geometric-integration}
\end{algorithm}
We list few further implementation details.
\begin{itemize}
\item A reasonable convergence criterion which measures how close the states are to a rank one matrix, is $|\tr(\mu_t)_{i}-\tr(\mu_t^2)_{i}|\leq\veps,\;\forall i\in\mc{V}$.
\item A resonable range for the stepsize parameter is $\epsilon \leq 0.1$.
\item In order to remove spurious non-Hermitian numerical rounding errors, we replace each matrix $(\Omega[\mu_{t}]_{i})$ by $\frac{1}{2}\big((\Omega[\mu_{t}])_{i} + (\Omega[\mu_{t}])_{i}^{\ast}\big)$.
\item The constraint $\tr\rho=1$ of \eqref{eq:def-mcDc} can be replaced by $\tr\rho=\tau$ with any constant $\tau>1$. This ensures for larger matrix dimensions $c$ that the entries of $\rho$ vary in a reasonable numerical range and the stability of the iterative updates.
\end{itemize}

Up to moderate matrix dimensions, say $c \leq 100$, the matrix exponential in \eqref{eq:def-Gamma-a} can be computed using any of the basic established algorithms \cite[Ch.~10]{Higham:2008aa} or available solvers. In addition, depending on size of the neighborhood $\mc{N}_{i}$ induced by the weighted adjacency relation of the underlying graph in \eqref{eq:omega-sum-1}, Algorithm \ref{alg:QSAF-S-geometric-integration} can be implemented in a fine-grained parallel fashion.

\begin{figure}
\centerline{
\includegraphics[width=0.2\textwidth]{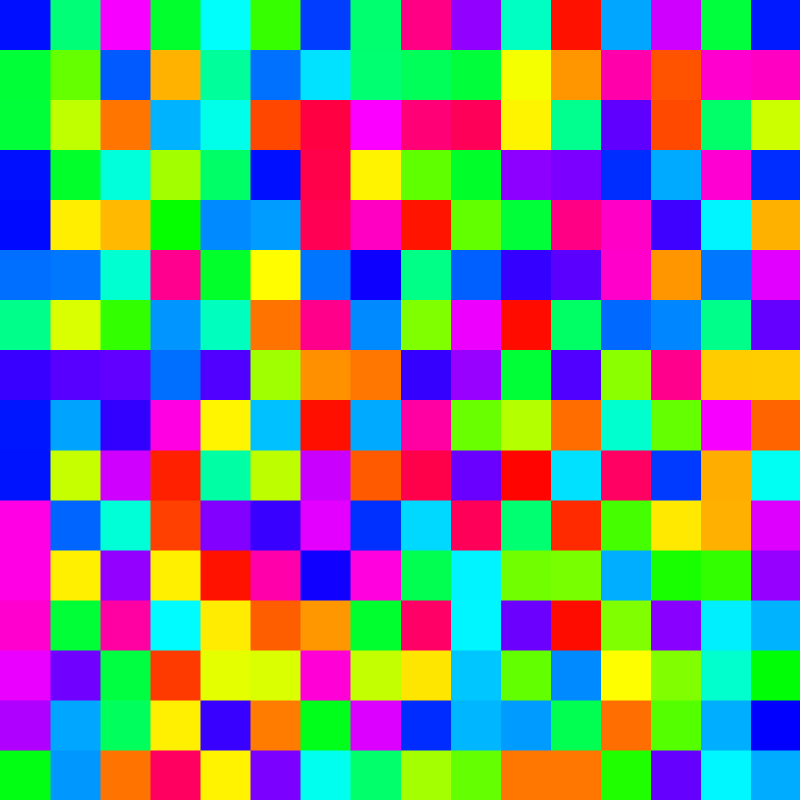}
\includegraphics[width=0.2\textwidth]{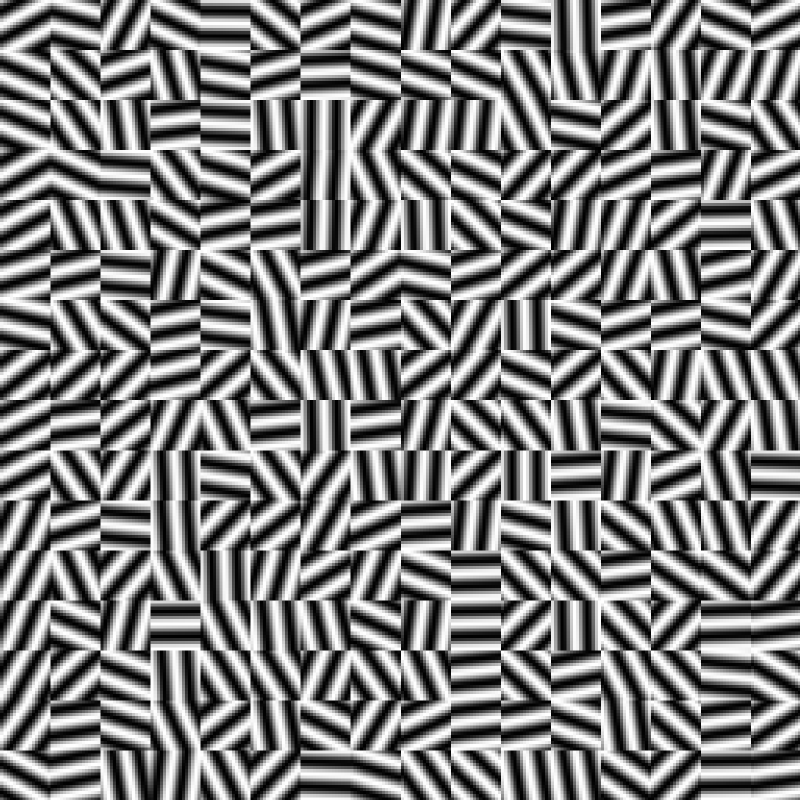}
\hfill
\includegraphics[width=0.2\textwidth]{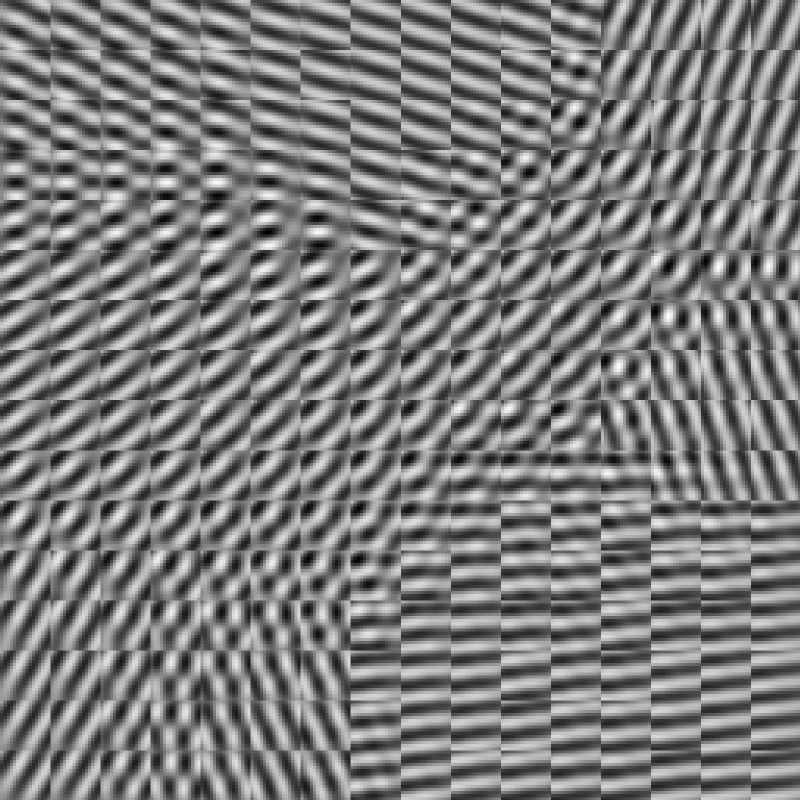}
\includegraphics[width=0.2\textwidth]{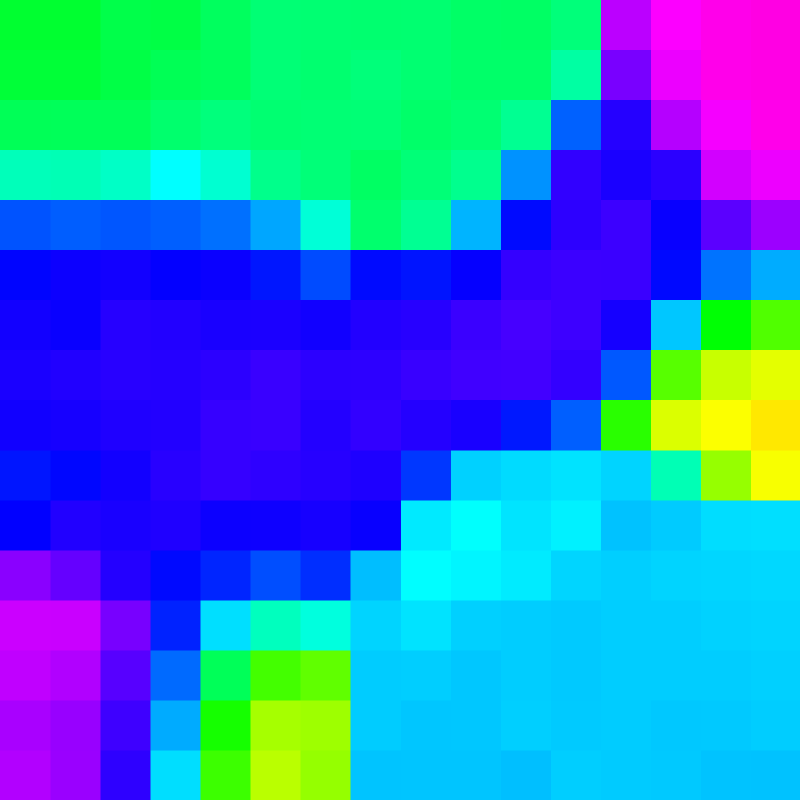}
\hfill
\includegraphics[width=0.11\textwidth]{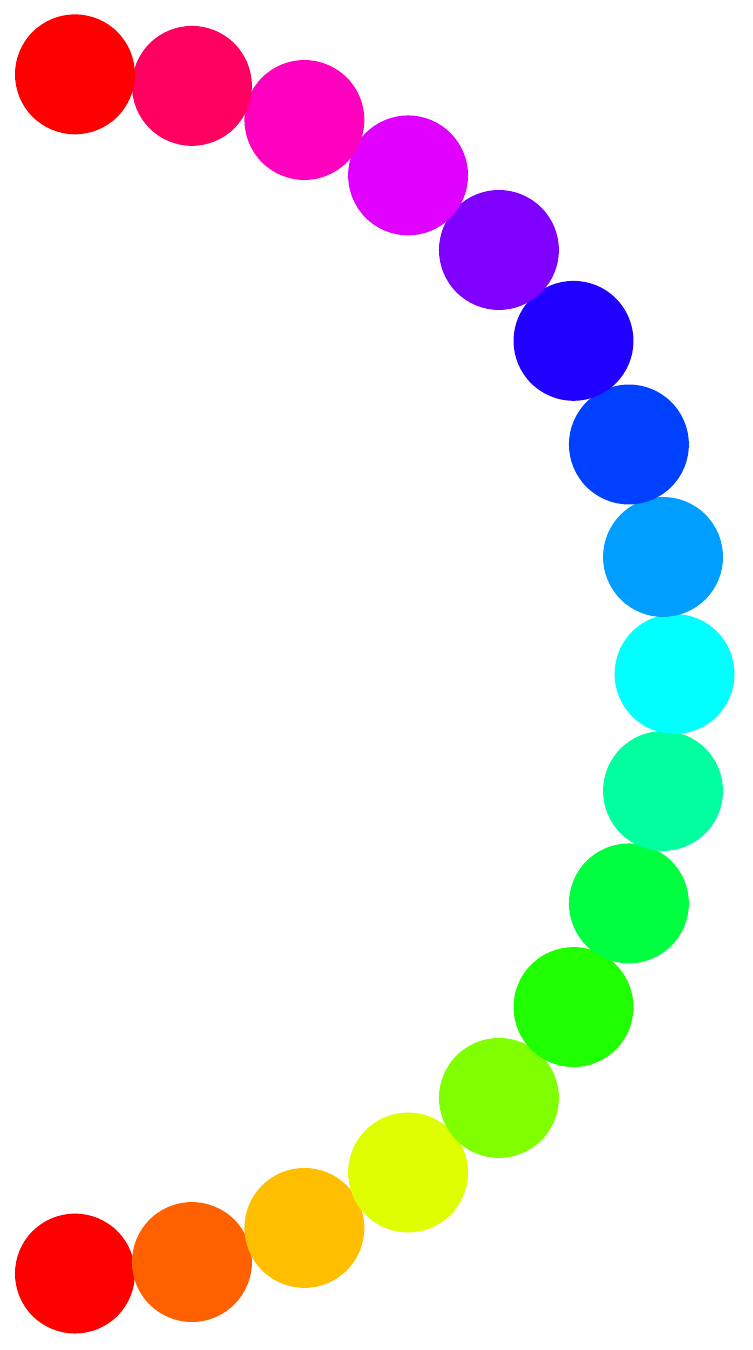}
}
\caption{
\textbf{Left pair:} A random collection of patches with oriented image structure. The colored image displays for each patch its orientation using the color code depicted by the rightmost panel. Each patch is represented by a rank-one matrix $D$ in \eqref{eq:def-L-rho-D}, obtained by vectorizing the patch and taking the tensor product. \textbf{Center pair:} The final state of the QSAF obtained by geometric integration with uniform weighting $\w_{ik}=\frac{1}{|\mc{N}_{i}|},\;\forall k\in\mc{N}_{i},\;\forall i\in\mc{V}$, of the nearest neighbors states. It represents an image partition but preserves image structure, due to geometric smoothing of patches encoded by non-commutative state spaces.
}
\label{fig:randomOrientations}
\end{figure}

\begin{figure}
\centerline{
\subcaptionbox{}{\includegraphics[width=0.2\textwidth]{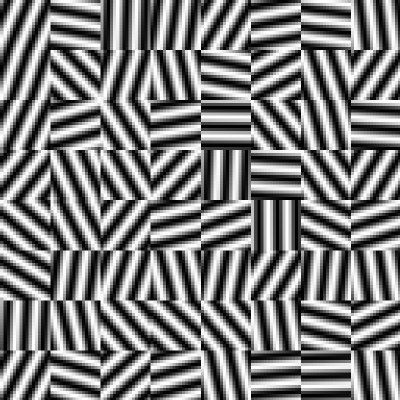}}
\hspace{0.01\textwidth}
\subcaptionbox{}{\includegraphics[width=0.2\textwidth]{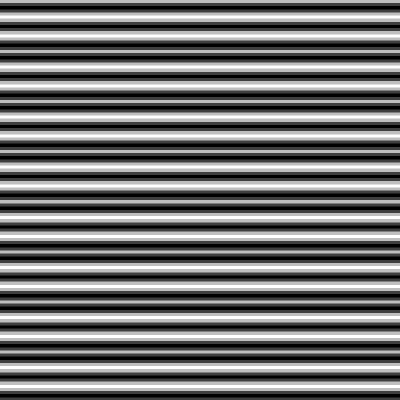}}
\hspace{0.01\textwidth}
\subcaptionbox{}{\includegraphics[width=0.2\textwidth]{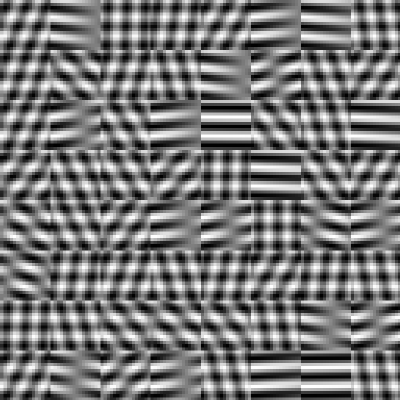}}
\hspace{0.01\textwidth}
\subcaptionbox{}{\includegraphics[width=0.2\textwidth]{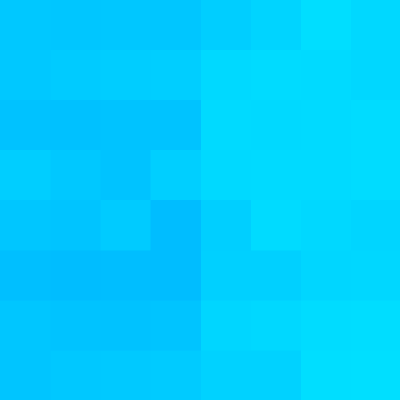}}
}
\caption{
\textbf{(a)} A random collection of patches with oriented image structure. \textbf{(b)} A collection of patches with the same oriented image structure. \textbf{(c)} Pixelwise mean of the patches (a) (b) at each location. \textbf{(d)} The QSAF recovers a close approximation of (b) (color code: see Fig.~\ref{fig:randomOrientations}) by iteratively smoothing the states $\rho_{k},\;k\in\mc{N}_{i}$ corresponding to (c) through geometric integration.
}
\label{fig:randomOrientationPairs}
\end{figure}

\begin{figure}
\centerline{
\includegraphics[width=0.32\textwidth]{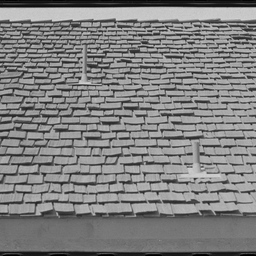}\hspace{0.025\textwidth}
\includegraphics[width=0.32\textwidth]{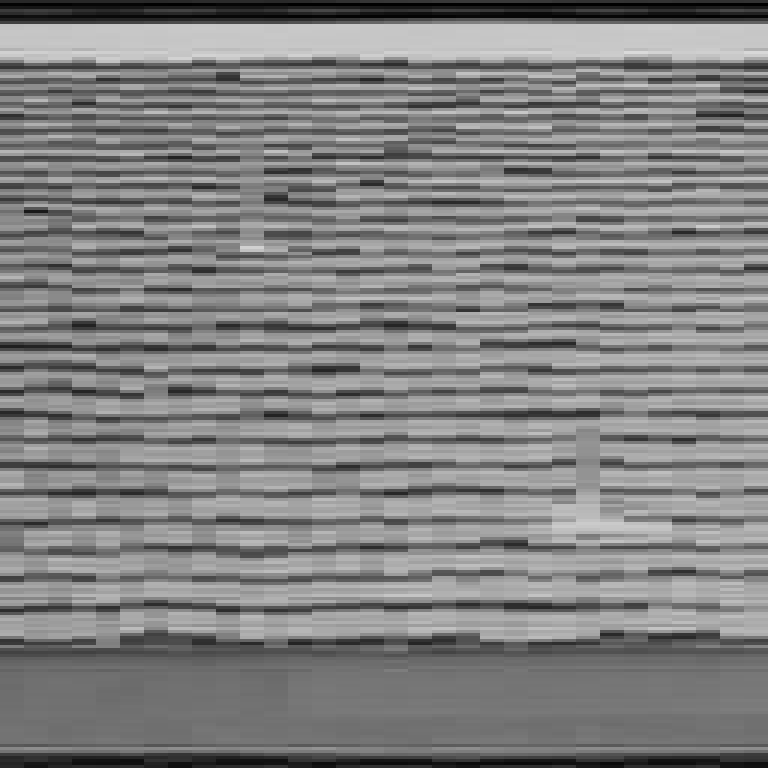}\hspace{0.025\textwidth}
\includegraphics[width=0.32\textwidth]{./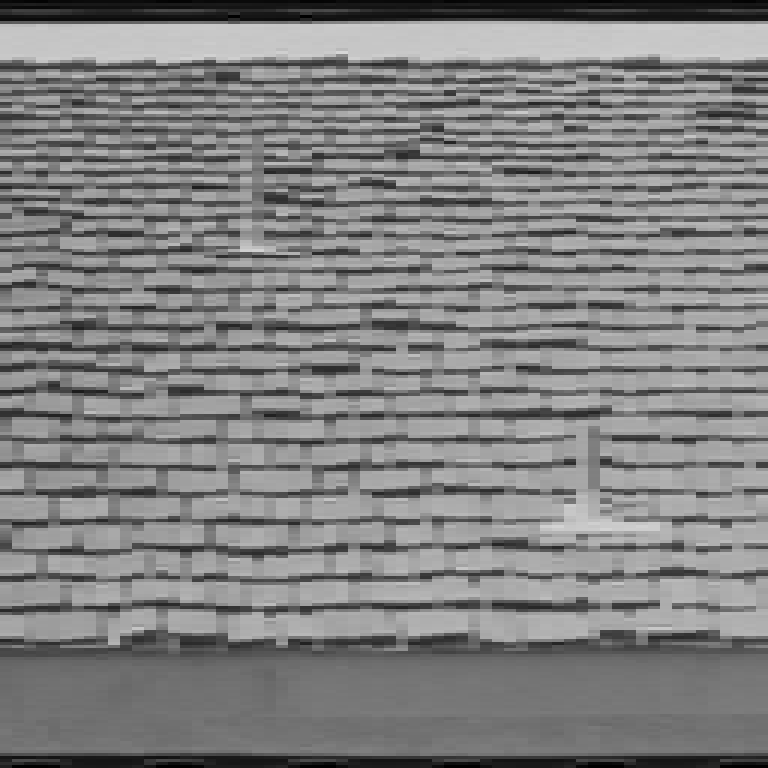}
}
\centerline{
\parbox{0.32\textwidth}{\centering (a)}\hspace{0.025\textwidth}
\parbox{0.32\textwidth}{\centering (b)}\hspace{0.025\textwidth}
\parbox{0.32\textwidth}{\centering (c)}
}
\caption{
\textbf(a) A real image, partitioned into patches of size $8\times 8$ and $4\times 4$ pixels, respectively. Each patch is represented as pure state with respect to a Fourier frame (see text). Instead of the nearest neighbor adjacency on a regular grid, each patch is adjacent to its 8 closest patches in the entire collection. Integrating the QSAF and decoding the resulting states (see text) yields the results (b) ($8\times 8$  patches) and (c) ($4\times 4$  patches), respectively. Result (b) illustrated the effect of smoothing at the patch level, in the Fourier domain, where as the smaller spatial scale used to compute (c) represents the input data fairly accurately, after  significant data reduction.
}
\label{fig:roof-Fourier}
\end{figure}

\subsection{Labeling 3D Data on Bloch Spheres}\label{sec:Bloch-sphere-averaging}

For the purpose of visual illustration, we consider the smoothing of 3D color vectors $d = (d_{1}, d_{2}, d_{3})^{\T}$, interpreted as Bloch vectors which parametrize density matrices \cite[Section 5.2]{Bengtsson:2017aa}
\begin{equation}\label{eq:rho-bloch}
\rho 
= \rho(d)
= \frac{1}{2}\bigg(I + d_{1}\bpm 0 & 1 \\ 1 & 0 \epm + d_{2} \bpm 0 & -i \\ i & 0 \epm + d_{3}\bpm 1 & 0 \\ 0 & -1 \epm\bigg) \in \C^{2\times 2},\qquad \|d\|\leq 1.
\end{equation}
Pure states $\rho$ correspond to unit vectors $d,\;\|d\|=1$, whereas vectors $d,\;\|d\|<1$ parametrize mixed states $\rho$. Given data $d_{i}=(d_{i,1},d_{i,2},d_{i,3})^{\T},\; i\in\mc{V}$ with $\|d_{i}\|\leq 1$, we initialized the QSAF at $\rho_{i}=\rho(d_{i}),\;i\in\mc{V}$, and integrated the flow. Each integration step involves geometric state averaging across the graph causing mixed states $\rho_{i}(t) = \rho(d_{i}(t)),\; i\in\mc{V}$,  which eventually converge towards pure states. Integration was stopped at time $t=T$, when $\min\{\|d_{i}(T)\|\colon i\in\mc{V}\}\geq 0.999$. The resulting vectors $d_{i}(T)$ are visualized as explained in the caption of the Figure \ref{fig:12-colors}. We point out that the two experiments discussed next are supposed to illustrate the behaviour of the QSAF and the impact of the underlying geometry, rather than a contribution to the literature on the processing of color images.

Figure \ref{fig:12-colors}(c) shows a noisy version of the image (b) used to initialize the quantum state assignment flow (QSAF). Panel (d) shows the labeled image, i.e.~the assigment of a pure state (depicted as Bloch vector) to each pixel of the input data (c). Although uniform weights were used and any prior information was absent, the result (d) demonstrates that the QSAF removes the noise and preserves the signal transition fairly well, both for large-scale local image structure (away from the image center) and for small-scale local image structure (close to the image center). This behaviour is quite unusual in comparison to traditional image denoising methods which inevitably require \textit{adaption} of regularization to the scale of local image structure. In addition, we note that noise removal is `perfect' for the three extreme points red, green and blue of panel (a), but suboptimal only for the remaining non-extreme points.

Panels (f)-(h) show the same results when the data are encoded in a better way, as depicted by (e) using unit vectors not only on the positive orthant but on the whole unit sphere. These data are illustrated by RGB vectors that result from translating the unit sphere (e) to the center $\frac{1}{2}(1,1,1)^{\T}$ of the RGB color cube $[0,1]^{3}$ and scaling it by $\frac{1}{2}$. This improved data encoding is clearly visible in panel (g) which displays the \textit{same} noise level as shown in panel (c). Accordingly, noise removal while preserving signal structure at \textit{all} local scales is more effectively achieved by the QSAF in (h), in comparison to (d).

\begin{figure}
\centerline{
\includegraphics[width=0.2\textwidth]{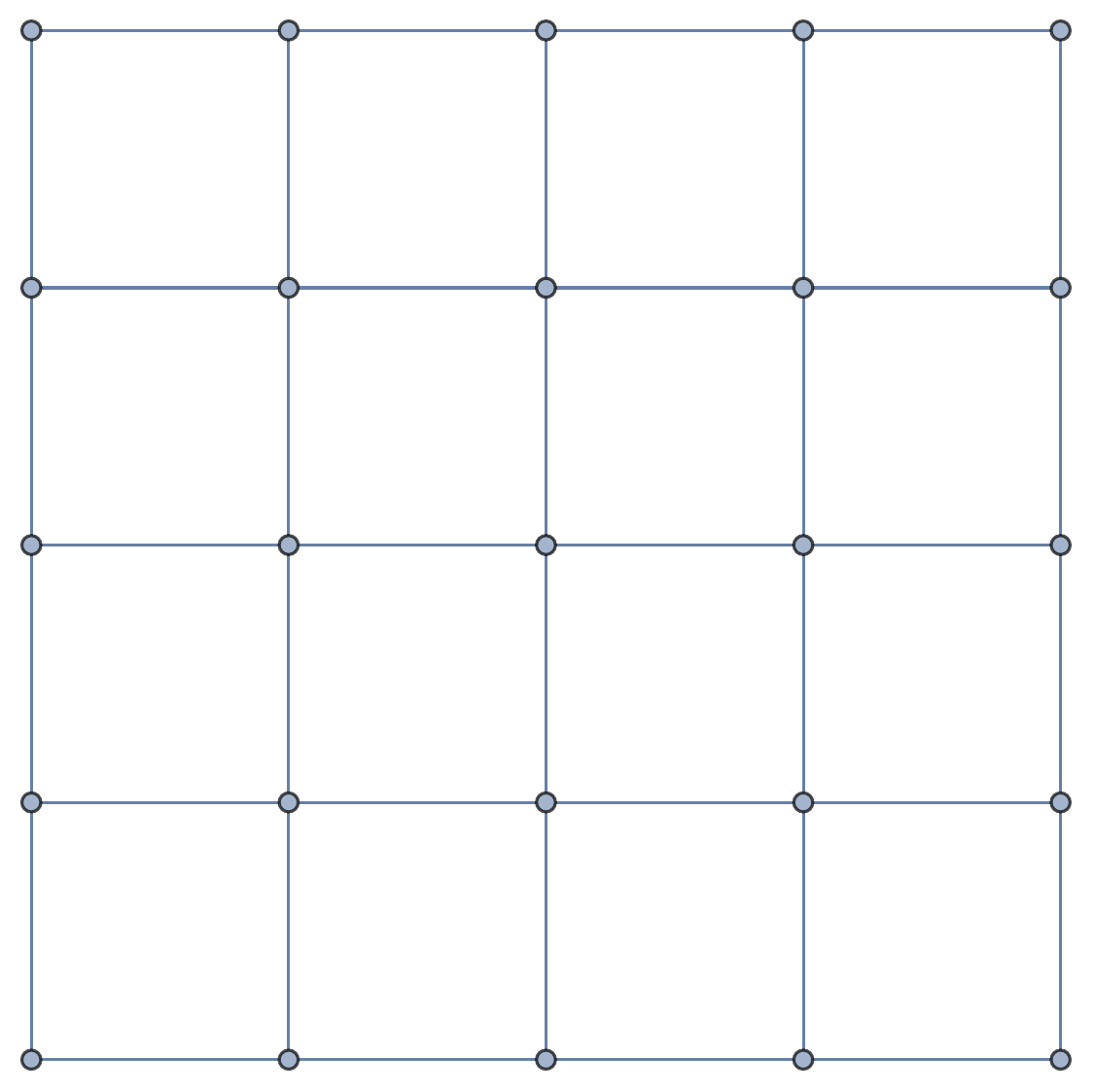}\hspace{0.15\textwidth}
\includegraphics[width=0.2\textwidth]{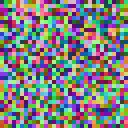}\hspace{0.15\textwidth}
\includegraphics[width=0.2\textwidth]{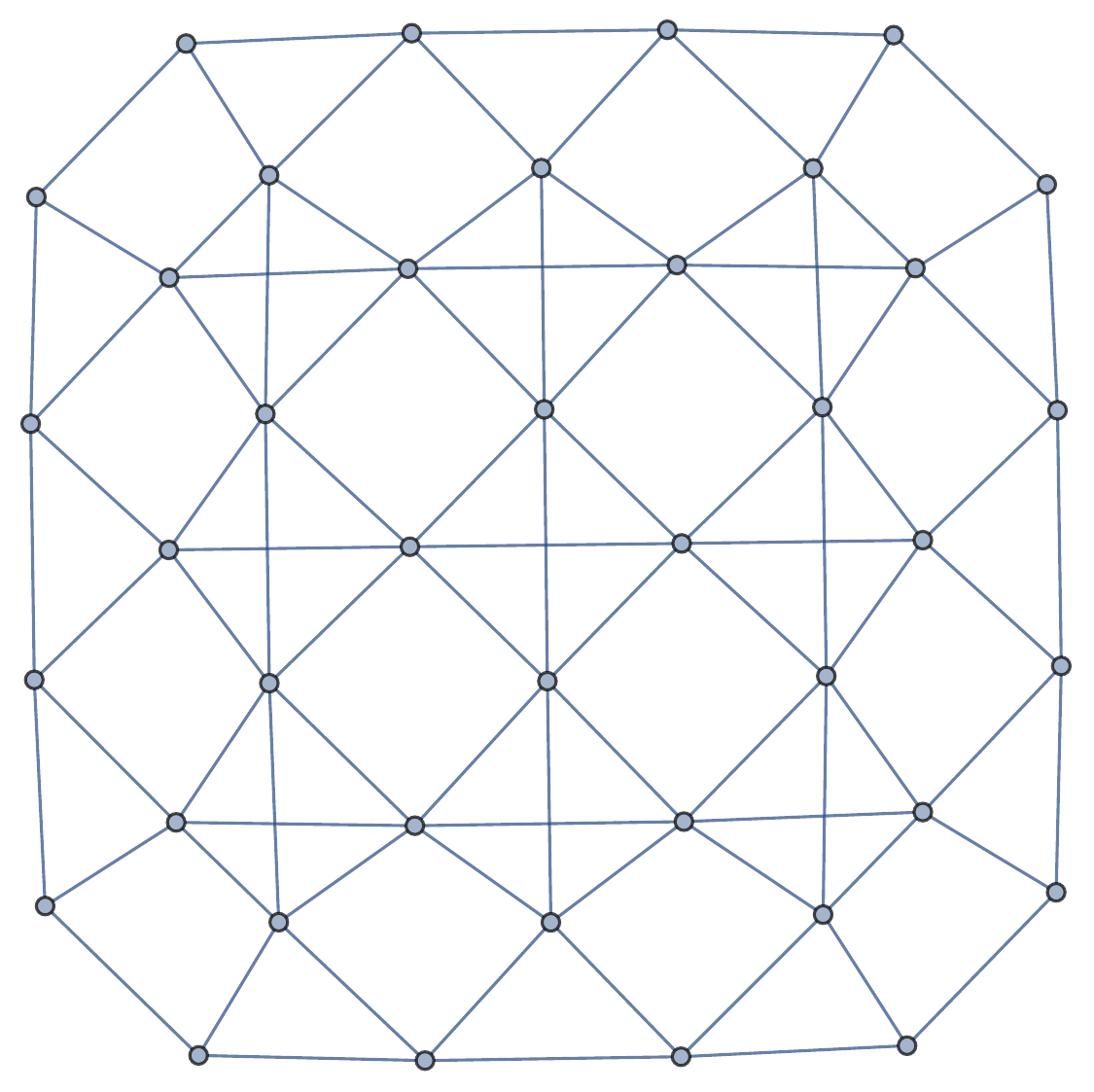}
}
\centerline{
\parbox{0.2\textwidth}{\centering (a)}\hspace{0.15\textwidth}
\parbox{0.2\textwidth}{\centering (b)}\hspace{0.15\textwidth}
\parbox{0.2\textwidth}{\centering (c)}
}
\centerline{
\includegraphics[width=0.3\textwidth]{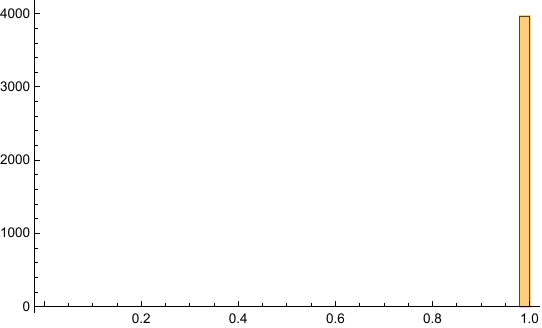}\hspace{0.05\textwidth}
\includegraphics[width=0.3\textwidth]{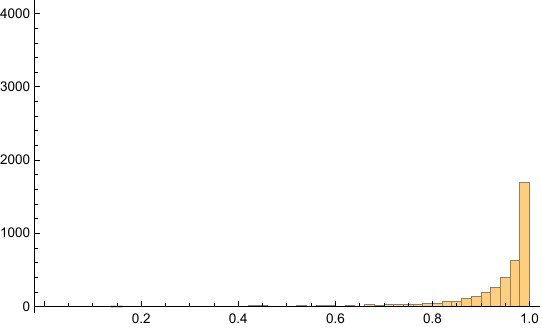}\hspace{0.05\textwidth}
\includegraphics[width=0.3\textwidth]{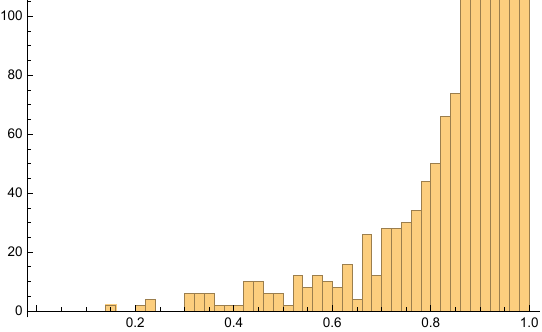}
}
\centerline{
\parbox{0.3\textwidth}{\centering (d)}\hspace{0.05\textwidth}
\parbox{0.3\textwidth}{\centering (e)}\hspace{0.05\textwidth}
\parbox{0.3\textwidth}{\centering (f)}
}
\centerline{
\includegraphics[width=0.3\textwidth]{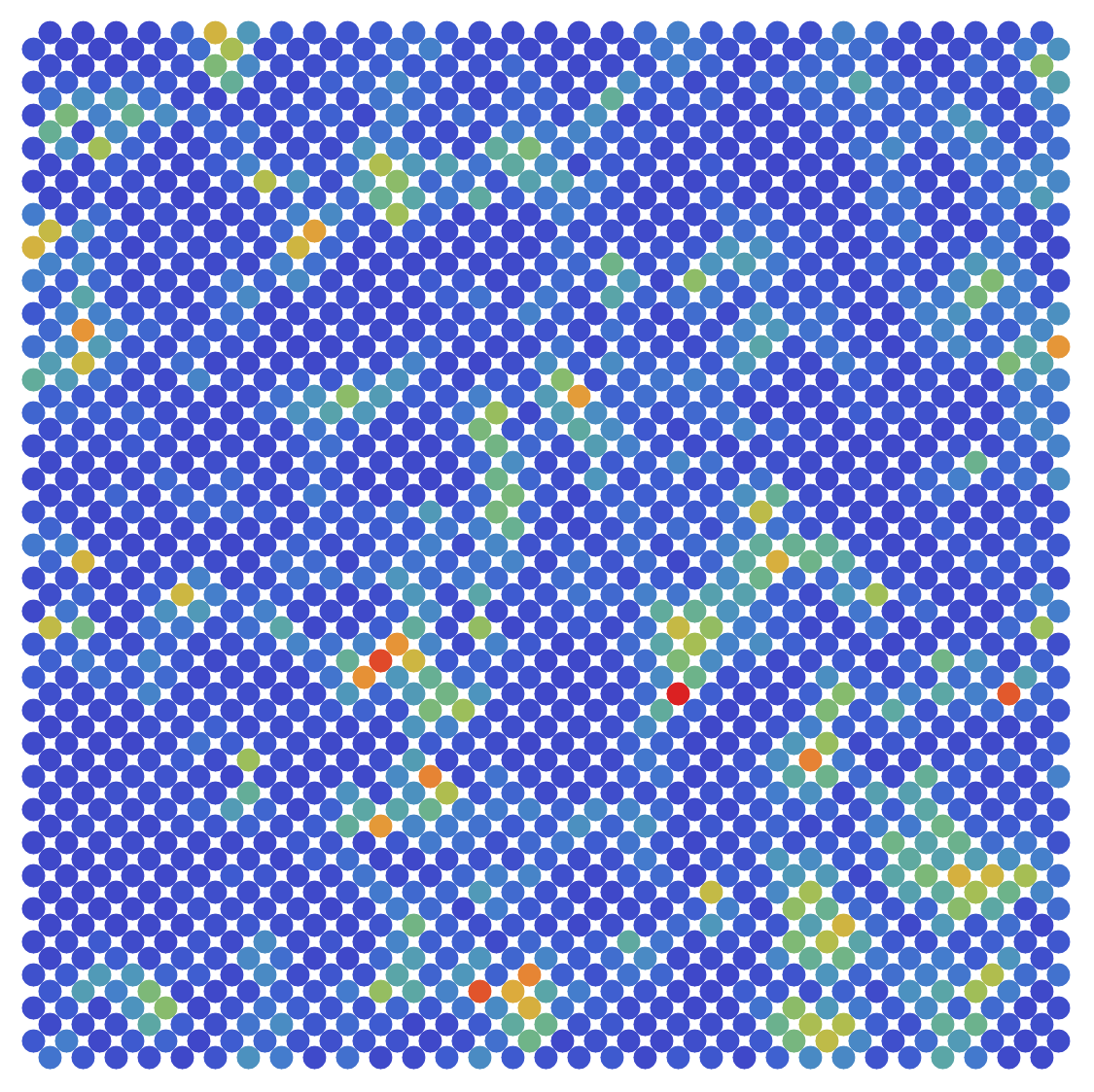}\hspace{0.05\textwidth}
\includegraphics[width=0.3\textwidth]{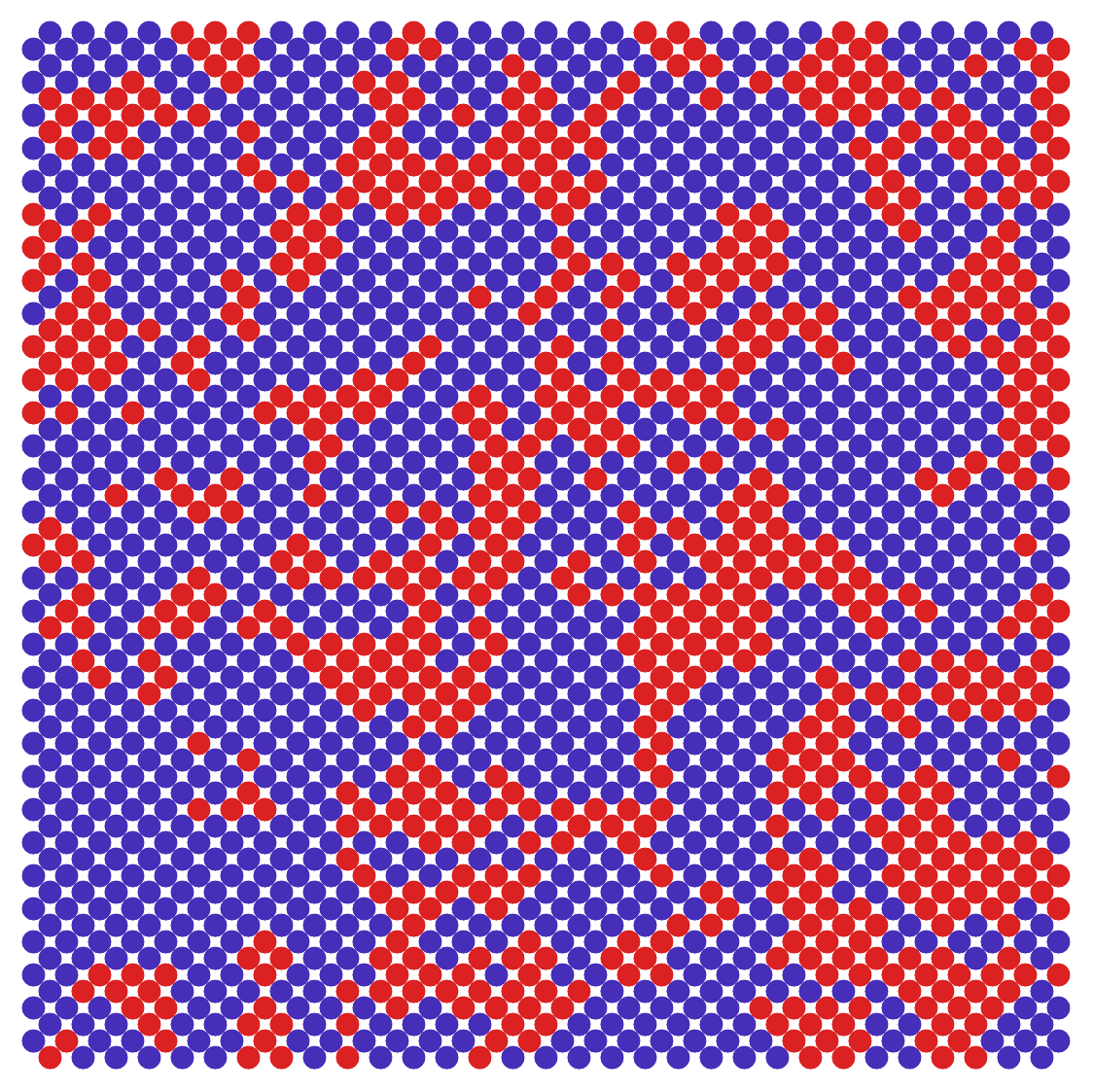}\hspace{0.05\textwidth}
\includegraphics[width=0.3\textwidth]{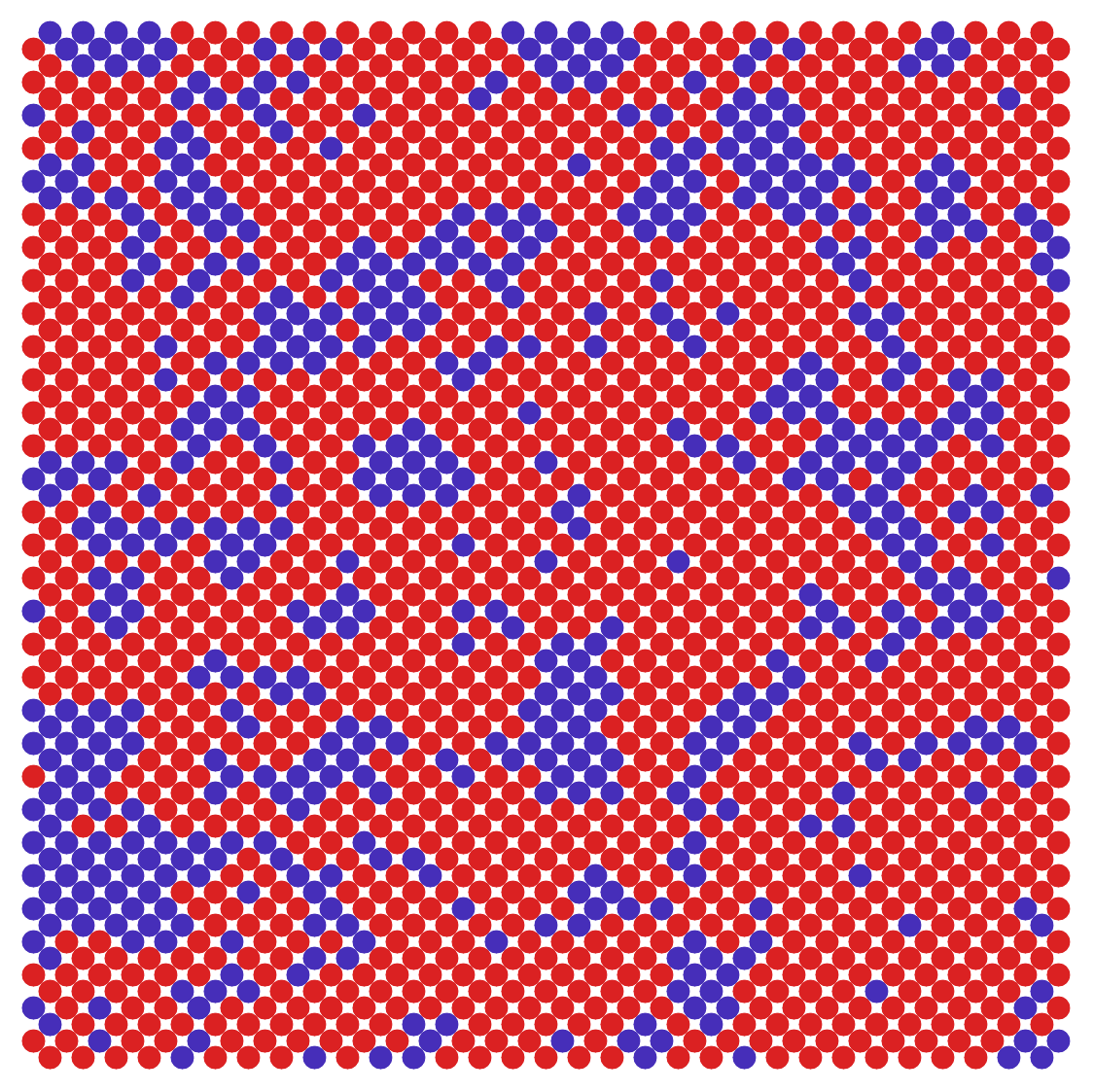}
}
\centerline{
\parbox{0.3\textwidth}{\centering (g)}\hspace{0.05\textwidth}
\parbox{0.3\textwidth}{\centering (h)}\hspace{0.05\textwidth}
\parbox{0.3\textwidth}{\centering (i)}
}
\caption{
        (a) A $5\times 5$ grid graph. (b) Random Bloch vectors $d_{i}\in S^{2}\subset \R^{3}$ (visualized using pseudo-color) defining states 
        $\rho_{i}$ by Eq.~\eqref{eq:rho-bloch} for each vertex of a $32\times 32$ grid graph. 
        (c) The line graph corresponding to (a). Each vertex corresponds to an edge $ij$ of the graph (a) and 
        an initially separable state $\rho_{ij} = \rho_{i}\otimes\rho_{j}$. This defines a simple shallow tensor network.\\ 
        The histograms display the norms of the Bloch vectors of the states $\tr_{j}(\rho_{ij})$ and $\tr_{i}(\rho_{ij})$ obtained
        by the partially tracing out one factor, for each state $\rho_{ij}$ indexed by a vertex $ij$ of the line graph of the grid graph (b).
        (d) The histogram shows that in the initial state, indeed all states are separable, while (e), (f) booth display the histogram of  
        the norms of all Bloch vectors after convergence of the quantum state assignment flow with uniform weights towards pure states. (g) Using the center coordinates of each edge of the grid graph (b), the entanglement represented by $\rho_{ij}$ 
        is visualized by a disk and `heat map' colors (blue: low entanglement, red: large entanglement). 
        For visual clarity, (h) and (i) again display the \textit{same} information after thresholding, using two colors only: Entangled states are marked with red when 
        the norm of the Bloch vectors dropped below the thresholds $0.95$ and $0.99$, respectively, and otherwise with blue.
}
\label{fig:entanglement}
\end{figure}

\subsection{Basic Image Patch Smoothing}\label{sec:patch-smoothing}

Figure \ref{fig:randomOrientations} shows an application of the QSAF to a \textit{random} spatial arrangement (grid graph) of normalized patches, where each vertex represents a patch, not a pixel. Applying vectorization taking the tensor product with itself, each patch is represented as a pure state in terms of a rank-one matrix $D_{i}$ at the corresponding vertex $i\in\mc{V}$, which constitute the input data in the similarity mapping \eqref{eq:def-Si-rho}. Integrating the flow causes the non-commutative interaction of the associated state spaces $\rho_{i},\; i\in\mc{V}$ through geometric averaging, here with uniform weights \eqref{eq:omega-sum-1},  until convergence towards pure states. The resulting patches are then simply given by the corresponding eigenvector, possibly after reversing the arbitrary sign of each eigenvector component, depending on the distance to the input patch.

The result shown in Figure \ref{fig:randomOrientations} reveals an interesting behaviour: structure-preserving patch smoothing without accessing explicitly individual pixels. In particular, the flow induces a \textit{partition} of the patches without any prior assumption on the data.

Figure \ref{fig:randomOrientationPairs} shows a variant of the scenario of Figure \ref{fig:randomOrientations} in order to demonstrate in another way the ability to separate local image structure by geometric smoothing at the patch level.

Figure \ref{fig:roof-Fourier} generalizes the set-up in two ways. Firstly, patches were encoded using the harmonic frame given by the two-dimensional discrete Fourier matrix. Secondly, non-uniform weights $\w_{ik}=e^{-\tau \|P_{i}-P_{j}\|_{F}^{2}},\;\tau>0$ were used depending on the distance of adjacent patches $P_{i}, P_{j}$.

Specifically, let $P_{i}$ denote the patch at vertex $i\in\mc{V}$ after removing the global mean and normalization using the Frobenius norm. Then, applying the FFT to each patch and vectorization, formally with the discrete two-dimensional Fourier matrix $F_{2}=F \otimes F$ (Kronecker product) and followed by stacking the rows, $\wh{p}_{i}= F_{2}\vvec(P_{i})$, the input data were defined as $D_{i} = F_{2}\Diag(-|\wh{p}_{i}|^{2}) F_{2}^{\ast}$, where the squared magnitude $|\cdot|^{2}$ was computed componentwise. Integrating the flow yields again pure states which were interpreted and decoded accordingly: the eigenvector was used as multiplicative filter of the magnitude of the Fourier transformed patch (keeping its phase), followed by rescaling the norm and adding the mean by approximating the original patch in terms of these two parameters.

The results shown as panels (b) and (c) of Figure \ref{fig:roof-Fourier} illustrate the effect of `geometric diffusion' at the patch level through integrating the flow, and how the input data are approximated depending on the chosen spatial scale (patch size), subject to significant data reduction.

\section{Conclusion}\label{sec:Conclusion}

We generalized the assignment flow approach for categorial distributions \cite{Astrom:2017ac} to density matrices on weighted graphs. While the former flows assign to each data point a label selected from a \textit{finite} set, the latter assign  to each data point a generalized `label' from the \textit{uncountable} submanifold of pure states.

Various further directions of research are indicated by the numerical experiments. This includes the unusual behavior of feature vector smoothing which parametrize complex-valued non-commutative state spaces (Figure \ref{fig:12-colors}), the structure-preserving interaction of spatially indexed feature patches without accessing individual pixels (Figures \ref{fig:randomOrientations} and \ref{fig:randomOrientationPairs}), the use of frames for signal representation and as obvervables whose expected values are governed by a quantum state assignment flow (Figure \ref{fig:roof-Fourier}), and the representation of spatial correlations by entanglement and tensorization (Figure \ref{fig:entanglement}). Extending to the novel quantum assignment flow approach the representation of the original assignment flow in the broader framework of geometric mechanics, as developed recently by \cite{Savarino:2021aa},  defines another promising research project spurred by established concepts of mathematics and physics. 

From these viewpoints, this paper adds a novel concrete approach based on information theory to the emerging literature on network design based on concepts from quantum mechanics; cf., e.g.~\cite{Levine:2018aa} and references therein. Our main motivation is the definition of a novel class of `neural ODEs' \cite{Chen:2018ab} in terms of the dynamical systems which generate a quantum state assignment flow. The layered architecture of a corresponding `neural network' is implicitly given by geometric integration. The inherent smoothness of the parametrization enables to learn the weight parameters from data. This will be explored in our future work along the various lines of research indicated above.


\clearpage
\section{Proofs}\label{sec:appendix}

\subsection{Proofs of Section~\ref{sec:information-geometry}}
\label{sec:appendix-sec-information-geometry}

\begin{proof}[Proof of Proposition~\ref{prop:properties-of-Rp}]
    We verify \eqref{eq:Rp-Pi0} by direct computation. For any $p\in\mc{S}_{c}$,
    \begin{subequations}
    \begin{align}
    R_{p}\eins_{c} &= \big(\Diag(p)-p p^{\T}\big)\eins_{c}
    = p - \la p,\eins_{c}\ra p = 0,\\
    R_{p}\pi_{c,0} &= R_{p}(I-\eins_{c}\eins_{\mc{S}_{c}}^{\T}) 
    = R_{p},\\
    \pi_{c,0} R_{p} &= (I-\eins_{c}\eins_{\mc{S}_{c}}^{\T}) R_{p}
    = R_{p} - \frac{1}{c}\eins_{c}(R_{p}\eins_{c})^{\T} = R_{p}.
    \end{align}
    \end{subequations}
    \noindent
    Next we characterize the geometric role of $R_{p}$ and show \eqref{eq:grad-R-simplex}. Let $p\in\mc{S}_{c}$ be parametrized by the local coordinates 
    \begin{subequations}
    \begin{align}
    \ol{p} &= \vphi(p) := (p_{1},p_{2},\dotsc,p_{c-1})^{\T} \in \R_{++}^{c-1}\\ 
    \label{eq:p-from-local}
    p &= \vphi^{-1}(\ol{p}) = (\ol{p}_{1},\dotsc,\ol{p}_{c-1},1-\la\eins_{c-1},\ol{p}\ra)^{\T}\in\mc{S}_{c}.
    \end{align}
    \end{subequations}
    Choosing the canonical basis $e_{1},\dotsc,e_{c}$ on $\mc{S}_{c}\subset\R^{c}$, we obtain a basis of the tangent space $T_{c,0}$
    \begin{equation}
    e_{j}-e_{c} = d\vphi^{-1}(e_{j}),\qquad j\in[c-1].
    \end{equation}
    Using these vectors a columns of the matrix
    \begin{equation}\label{eq:def-B-T0}
    B := (e_{1}-e_{c},\dotsc,e_{c-1}-e_{c}) = \bpm I_{c-1} \\ 
    -\eins_{c-1}^{\T} \epm \in \R^{c\times (c-1)},
    \end{equation}
    one has for any $v\in T_{c,0}$
    \begin{subequations}
    \begin{align}
    v &= B \ol{v} = \bpm \ol{v} \\ v_{c} \epm
    = \bpm \ol{v} \\ -\la\eins_{c-1},\ol{v}\ra \epm, &
    \ol{v} &= (v_{1},\dotsc,v_{c-1})^{\T}\\
    \ol{v} &= B^{\dagger} v, &
    B^{\dagger} &= \bpm I_{c-1} & 0\epm \pi_{c,0},
    \end{align}
    \end{subequations}
    where $B^{\dagger} := (B^{\T} B)^{-1} B^{\T}$ denotes the Moore-Penrose generalized inverse of $B$. Substituting this parametrization and evaluating the metric \eqref{eq:def-gp} gives
    \begin{subequations}
    \begin{align}
    g_{p}(u,v) 
    &= \la \ol{u},B^{\T}\Diag(p)^{-1}B\ol{v}\ra
    = \Big\la\ol{u},\bpm I_{c-1} & -\eins_{c-1}\epm \Diag(p)^{-1} \bpm I_{c-1} \\ -\eins_{c-1}^{\T}\epm \ol{v}\Big\ra\\
    &= \Big\la\ol{u},\Big(\Diag(\ol{p})^{-1} + \frac{1}{1-\la\eins_{c-1},\ol{p}\ra}\eins_{c-1}\eins_{c-1}^{\T}\Big)\ol{v}\Big\ra\\
    &=: \la\ol{u},G(\ol{p})\ol{v}\ra.
    \end{align}
    \end{subequations}
    Applying the Sherman-Morrison-Woodbury matrix inversion formula \cite[p.~9]{Horn:2013aa}
    \begin{equation}
    (A+x y^{\T})^{-1} = A^{-1} - \frac{A^{-1}x y^{\T} A^{-1}}{1 + \la y, A^{-1} x\ra}
    \end{equation}
    yields
    \begin{subequations}\label{eq:R_olp_G-1}
    \begin{align}
    G(\ol{p})^{-1} 
    &= \Diag(\ol{p}) - \frac{1}{1-\la\eins_{c-1},\ol{p}\ra}\frac{\Diag(\ol{p})\eins_{c-1}\eins_{c-1}^{\T}\Diag(\ol{p})}{1 + \frac{1}{1-\la\eins_{c-1},\ol{p}\ra}\la\eins_{c-1},\ol{p}\ra}\\
    &= \Diag(\ol{p}) - \Diag(\ol{p})\eins_{c-1}\eins_{c-1}^{\T}\Diag(\ol{p}) = \Diag(\ol{p}) - \ol{p}\,\ol{p}^{\T}\\
    &= R_{\ol{p}}.
    \end{align}
    \end{subequations}
    Let $v\in T_{c,0}$. Then, using the equations
    \begin{subequations}
    \begin{align}
    p_{c}                   &\overset{\eqref{eq:p-from-local}}{=}1-\la\eins_{c-1},\ol{p}\ra, \\
    R_{\ol{p}}\eins_{c-1}   &= \ol{p}-\la\eins_{c-1},\ol{p}\ra\ol{p} = p_{c}\ol{p}, 
    \end{align}
    \end{subequations}
    we have
    \begin{subequations}\label{eq:Rp-v-local}
    \begin{align}
    R_{p} v &= \bpm R_{\ol{p}} & - p_{c}\ol{p}\\
    - p_{c}\ol{p}^{\T} & p_{c}-p_{c}^{2} \epm \bpm \ol{v} \\ 
    v_{c} \epm = \bpm R_{\ol{p}}\ol{v} - v_{c} R_{\ol{p}}\eins_{c-1}\\
    -\la R_{\ol{p}}\eins_{c-1},\ol{v}\ra + v_{c}p_{c}\la\eins_{c-1},\ol{p}\ra \epm\\
    &= \bpm R_{\ol{p}}\ol{v} \\ -\la\eins_{c-1},R_{\ol{p}}\ol{v}\ra
    \epm - v_{c} \bpm R_{\ol{p}}\eins_{c-1} \\ 
    -\la\eins_{c-1},R_{\ol{p}}\eins_{c-1}\ra \epm\\
    &\overset{\eqref{eq:def-B-T0}}{=} B R_{\ol{p}} (\ol{v} - v_{c}\eins_{c-1}).
    \end{align}
    \end{subequations}
    Now consider any smooth function $f\colon\mc{S}_{c}\to\R$. Then
    \begin{subequations}
    \begin{align}
    \partial_{\ol{p}_{i}}\big(f\circ\vphi^{-1}(\ol{p})\big)
    &= \sum_{j\in[c]}\partial_{j}f(p)\partial_{\ol{p}_{i}}\vphi^{-1}(\ol{p})
    \overset{\eqref{eq:p-from-local}}{=}
    \partial_{i}f(p)-\partial_{c}f(p),
    \\
    \partial_{\ol{p}}\big(f\circ\vphi^{-1}(\ol{p})\big)
    &= \ol{\partial f(p)}-\partial_{c}f(p)\eins_{c-1}.
    \end{align}
    \end{subequations}
    Comparing the last equation and \eqref{eq:Rp-v-local} shows that 
    \begin{equation}\label{eq:Rp-partial-f-local}
    R_{p}\partial f(p) = B R_{\ol{p}}\partial_{\ol{p}}\big(f\circ\vphi^{-1}(\ol{p})\big)
    \overset{\eqref{eq:R_olp_G-1}}{=} 
    B G(p)^{-1} \partial_{\ol{p}}\big(f\circ\vphi^{-1}(\ol{p})\big),
    \end{equation}
    which proves \eqref{eq:grad-R-simplex}.
\end{proof}

\vspace{1cm}

\newpage
\subsection{Proofs of Section~\ref{sec:AF}}\label{sec:appendix_standard_af_proofs}

\begin{proof}[Proof of Lemma~\ref{lem:derivatives_exp_standard_af}]
    Let $v(t)\in T_{c,0}$ be a smooth curve with $\dot v(t) = u$. Then
    \begin{subequations}
    \begin{align}
    \frac{d}{dt}\exp_{p}\big(v(t)\big)
    &= \frac{d}{dt}\frac{p\cdot e^{v(t)}}{\la p,e^{v(t)}\ra}
    = \frac{p\cdot u\cdot e^{v(t)}}{\la p,e^{v(t)}\ra} - \la p,u\cdot e^{v(t)}\ra\frac{p\cdot e^{v(t)}}{\la p,e^{v(t)}\ra^{2}}
    \\
    &= \exp_{p}\big(v(t)\big)\cdot u - \big\la u,\exp_{p}\big(v(t)\big)\big\ra \exp_{p}\big(v(t)\big)
    = R_{\exp_{p}(v(t))} u.
    \end{align}
    \end{subequations}
    Similarly, for a smooth curve $p(t)\in\mc{S}_{c}$ with $\dot p(t)=u$, one has
    \begin{subequations}
    \begin{align}
    \frac{d}{dt}\exp_{p(t)}(v) 
    &= \frac{d}{dt}\frac{p(t)\cdot e^{v}}{\la p(t),e^{v}\ra}
    = \frac{\dot p(t)\cdot e^{v}}{\la p(t),e^{v}\ra} - \la\dot p(t),e^{v}\ra\frac{p(t)\cdot e^{v}}{\la p(t),e^{v}\ra^{2}}
    \\
    &= \exp_{p(t)}(v)\cdot\frac{u}{p(t)} - \Big\la\frac{u}{p(t)},\exp_{p(t)}(v)\Big\ra \exp_{p(t)}(v)
    = R_{\exp_{p(t)}(v)}\frac{u}{p(t)}. \qedhere
    \end{align}
    \end{subequations}
\end{proof}

\begin{proof}[Proof of Theorem~\ref{thm:single-vertex-AF-parametrization}]
    Put 
    \begin{equation}\label{eq:def-q-L-p-t-D}
    q(t) = L_{p(t)}(D)
    \end{equation}
    where $p(t)$ solves \eqref{eq:single-vertex-AF}. Using \eqref{eq:def-LpD}, \eqref{eq:dLpD} and \eqref{eq:def-q-L-p-t-D}, we obtain
    \begin{equation}
    \dot q 
    = d_{p}L_{p(t)}(D)[\dot p(t)]
    = R_{q(t)}\Big(\frac{\dot p(t)}{p(t)}\Big)
    \overset{\eqref{eq:single-vertex-AF}}{=}
    R_{q(t)} \big(q(t)-\la p(t),q(t)\ra\eins_{c}\big)\overset{\eqref{eq:Rp-eins}}{=}R_{q(t)} q(t),  
    \end{equation}
    which shows \eqref{eq:single-vertex-AF-parametrization-b}. Write $p(t)=\exp_{\eins_{\mc{S}_{c}}}(r(t))$.  Then differentiating \eqref{eq:single-vertex-AF-parametrization-a} yields with $r(t) = \int_{0}^{t}q(\tau)d\tau$
    \begin{equation}
    \dot p(t) 
    \overset{\eqref{eq:dexp-p}}{=}
    R_{\exp_{\eins_{\mc{S}_{c}}}(r(t))} \dot r(t)
    \overset{\eqref{eq:single-vertex-AF-parametrization-a}}{=}
    R_{p(t)} q(t)
    \overset{\eqref{eq:def-q-L-p-t-D}}{=} 
    R_{p(t)} L_{p(t)}(D),
    \end{equation}
    which proves the equivalence of \eqref{eq:single-vertex-AF} and \eqref{eq:single-vertex-AF-parametrization}.
\end{proof}

\begin{proof}[Proof of Corollary~\ref{prop:single-vertex-AF}]
        The solution $p(t)$ to \eqref{eq:single-vertex-AF} is given by \eqref{eq:single-vertex-AF-parametrization}. Proposition \eqref{prop:properties-of-Rp} and  Eq.~\eqref{eq:grad-R-simplex} show that \eqref{eq:single-vertex-AF-parametrization-b} is the Riemannian ascent flow of the function $\mc{S}_{c}\ni q\mapsto \frac{1}{2}\|q\|^{2}$. The stationary points satisfy
        \begin{equation}
        R_{q} q = (q-\|q\|^{2})\cdot q = 0
        \end{equation}
        and form the set
        \begin{equation}
        Q^{\ast} := \Big\{q^{\ast} = \frac{1}{|\mc{J}^{\ast}|}\sum_{j\in\mc{J}^{\ast}} e_{j}\colon \mc{J}^{\ast}\subseteq [c]\Big\}.
        \end{equation}
        The case $\mc{J}^{\ast}=[c]$, i.e.~$q^{\ast}=\eins_{\mc{S}_{c}}$, can be ruled out if $\frac{D}{\la\eins_{c},D\ra}\neq\mc{S}_{c}$, which will always be the case in practice where $D$ corresponds to real data (measurement, observation). The global maxima correspond to the vertices of $\Delta_{c}=\ol{\mc{S}}_{c}$, i.e.~$|\mc{J}^{\ast}|=1$. The remaining stationary points are local maxima and degenerate, since vectors $D$ with non-unique minimal component form a negligible null set. In any case, $\lim_{t\to\infty} p(t) \overset{\eqref{eq:single-vertex-AF-parametrization-a}}{=} \lim_{t\to\infty}q(t) = q^{\ast}$, depending on the index set $\mc{J}^{\ast}$ determined by $D$.
\end{proof}
%

\newpage
\subsection{Proofs of Section~\ref{sec:DAF-all}}\label{sec:appendix-quantum-state-assignment-flow}

\begin{proof}[Proof of Proposition~\ref{prop:grad-Dc}]
	The Riemannian gradient is defined by \cite[pp.~337]{Kobayashi:1969aa}
	\begin{subequations}
	\begin{align}
	0 &= df[X]-g_{\rho}(\ggrad_{\rho}f, X)
	\overset{\eqref{eq:def-inner-rho-T}}{=} 
	\la\partial f, X\ra - \la \mb{T}_{\rho}[\ggrad_{\rho}f],X\ra
	\\
	&= \la\partial f-\mb{T}_{\rho}[\ggrad_{\rho}f],X\ra,\qquad
	\forall X\in\mc{H}_{c,0}.
	\end{align}
	\end{subequations}
	Choosing the parametrization $X=Y-\tr(Y)I\in\mc{H}_{c,0}$ with $Y\in\mc{H}_{c}$, we further obtain
	\begin{subequations}
	\begin{align}
	0 &= \la\partial f-\mb{T}_{\rho}[\ggrad_{\rho}f],Y\ra - \tr(Y)\tr(\partial f-\mb{T}_{\rho}[\ggrad_{\rho}f])
	\\
	&= \big\la\partial f-\mb{T}_{\rho}[\ggrad_{\rho}f] - \tr(\partial f-\mb{T}_{\rho}[\ggrad_{\rho}f]) I, Y \big\ra,\quad
	\forall Y\in\mc{H}_{c}.
	\end{align}
	\end{subequations}
	The left factor must vanish. Applying the linear mapping $\mb{T}_{\rho}^{-1}$ and solving for $\ggrad_{\rho} f$ and  gives
	\begin{equation}
	\ggrad_{\rho} f 
	= \mb{T}_{\rho}^{-1}[\partial f] - \tr(\partial f-\mb{T}_{\rho}[\ggrad_{\rho}f]) \mb{T}_{\rho}^{-1}[I].
	\end{equation}
	Since $\ggrad_{\rho}f\in\mc{H}_{c,0}$, taking the trace on both sides and using $\tr\mb{T}_{\rho}^{-1}[I]=\tr\rho=1$ yields
	\begin{equation}
	0 = \tr\mb{T}_{\rho}^{-1}[\partial f] - \tr\partial f + \tr\mb{T}_{\rho}[\ggrad_{\rho}f].
	\end{equation}
	Substituting the last two summands in the equation before gives
	\begin{subequations}
	\begin{align}
	\ggrad_{\rho} f
	&= \mb{T}_{\rho}^{-1}[\partial f] - (\tr\mb{T}_{\rho}^{-1}[\partial f])\rho
	\\
	&= \mb{T}_{\rho}^{-1}[\partial f] - \la\rho, \partial f \ra\rho,
	\end{align}
	\end{subequations}
	where the last equation follows from \eqref{eq:ToDo-tr}. 
	\end{proof}

\begin{proof}[Proof of Lemma~\ref{lem:repl-proj-rho-commute}]
        The equation $\Pi_{c,0}\circ \Rep_{\rho} = \Rep_{\rho}$ follows from $\Rep_{\rho}[X]\in\mc{H}_{c,0}$ and hence 
        \begin{equation}
        \tr \Rep_{\rho}[X] 
        \overset{\eqref{eq:def-Rrho}}{=} 
        \tr \mb{T}_{\rho}^{-1}[X]-\la\rho,X\ra\tr\rho
        \overset{\eqref{eq:ToDo-tr}}{=} 
        \la \rho,X\ra - \la \rho,X\ra = 0.
        \end{equation} 
        Thus
        \begin{subequations}
        \begin{align}
        \Pi_{c,0}\circ \Rep_{\rho}[X]
        &= \Rep_{\rho}[X]
        = \Rep_{\rho}[X] - \frac{\tr X}{c}\big(\rho-\underbrace{\la\rho, I\ra}_{=1}\rho\big)
        \\
        &= \Rep_{\rho}[X] - \frac{\tr X}{c}\Rep_{\rho}[I]
        = \Rep_{\rho}\Big[X-\frac{\tr X}{c}I\Big]
        \overset{\eqref{eq:def-Pic0}}{=} 
        \Rep_{\rho}\circ\Pi_{c,0}[X]. \qedhere
        \end{align}
        \end{subequations}
\end{proof}

\begin{proof}[Proof of Lemma~\ref{lem:Gamma-Pi0}]
	Using \eqref{eq:def-Pic0} we compute
	\begin{equation}
	\expm(\Pi_{c,0}[Z])
	= \expm\Big(Z-\frac{\tr Z}{c}I\Big)
	= e^{\frac{\tr Z}{c}} \expm(Z),
	\end{equation}
	where the last equation holds since $Z$ and $I$ commute. Substitution into \eqref{eq:def-Gamma-a} cancels the scalar factor $e^{\frac{\tr Z}{c}}$ and shows \eqref{eq:Gamma-Pi0}.
\end{proof}

\begin{proof}[Proof of Proposition~\ref{prop:Gamma-invertible}]
	We show $\Gamma\circ\Gamma^{-1}=\mrm{id}_{\mc{D}_{c}}$ and $\Gamma^{-1}\circ\Gamma=\mrm{id}_{\mc{H}_{c,0}}$. As for the first relation, we compute
	\begin{subequations}
	\begin{align}
	\Gamma\circ\Gamma^{-1}(\rho)
	&= \exp_{m}\Big(\Gamma^{-1}(\rho)-\psi\big(\Gamma^{-1}(\rho)\big)I\Big)
	\\
	&= \expm\Big(\logm\rho-\frac{\tr(\logm\rho)}{c}I - \log\Big(\tr\expm\big(\logm\rho-\frac{\tr(\logm\rho)}{c}I\big)\Big)I\Big)
	\intertext{and since $\logm\rho$ and $I$ commute}
	&= \expm\Big(\logm\rho-\frac{\tr(\logm\rho)}{c}I - \log\tr\big(e^{-\frac{1}{c}\tr(\logm\rho)}\rho\big)I\Big)
	\\
	&\overset{\tr\rho=1}{=} \expm(\logm\rho)
	\\
	&= \rho.
	\end{align}
	\end{subequations}
	As for the second relation, we compute
	\begin{subequations}\allowdisplaybreaks
	\begin{align}
	\Gamma^{-1}\circ\Gamma(X)
	&= \Pi_{c,0}[\logm\circ\Gamma(X)]
	= \Pi_{c,0}\big[\logm\circ\expm\big(X-\psi(X)I\big)\big]
	\\
	&= \Pi_{c,0}[X]-\psi(X)\Pi_{c,0}[I]
	= \Pi_{c,0}[X]
	\\
	&= X,
	\end{align}
	\end{subequations}
	since $X\in\mc{H}_{c,0}$ by assumption.
\end{proof}

\begin{proof}[Proof of Lemma~\ref{lem:dGamma-1}]
    In view of the definition \eqref{eq:def-Gamma} of $\Gamma$, we compute using the chain rule
    \begin{subequations}\label{eq:proof-dGamma-T-roh-1}
    \begin{align}
    d\Gamma(H)[Y]
    &= \frac{d}{dt}\exp_{m}\big(H+t Y-\psi(H+t Y)I\big)\big|_{t=0}
    \\
    &= d\expm\big(H-\psi(H)I\big)\big[Y-d\psi(H)[Y] I\big]
    \\
    &\overset{\eqref{eq:def-mcT-inverse}}{=}
    \mb{T}_{\rho}^{-1}\big[Y-d\psi(H)[Y] I\big].
    \end{align}
    \end{subequations}
    Furthermore,
    \begin{subequations}
    \begin{align}
    d\psi(H)[Y]
    &\overset{\eqref{eq:def-psi-Gamma}}{=}
    \frac{1}{\tr\exp_{m}(H)}\tr(d\exp_{m}(H)[Y])
    \\
    &\overset{\eqref{eq:def-mcT-inverse}}{=}
    \frac{1}{\tr\exp_{m}(H)}\tr\big(\mb{T}^{-1}_{\exp_{m}(H)}[Y]\big),\qquad
    \exp_{m}(H)
    \overset{\eqref{eq:def-Gamma-a}}{=}
    \big(\tr\exp_{m}(H)\big)\Gamma(H)
    \\
    &\overset{\eqref{eq:ToDo-tr}}{=} 
    \frac{1}{\tr\exp_{m}(H)} \la\exp_{m}(H),Y\ra
    \\
    &\overset{\eqref{eq:def-Gamma-a}}{=} 
    \la\Gamma(H),Y\ra 
    = \la\rho,Y\ra,
    \end{align}
    \end{subequations}
    where the last equation follows from the assumption $\rho=\Gamma(H)$. 
    Substitution into \eqref{eq:proof-dGamma-T-roh-1} gives \eqref{eq:dGamma-H}. 
    Regarding \eqref{eq:dGamma-1}, using the expression  \eqref{eq:def-Gamma-1} for $\Gamma^{-1}$, we compute
    \begin{subequations}
    \begin{align}
    d\Gamma^{-1}(\rho)[X]
    &= \Pi_{c,0}\circ d\logm(\rho)[X]
    \\
    &\overset{\eqref{eq:def-mcT}}{=}
    \Pi_{c,0}\circ \mb{T}_{\rho}[X],
    \end{align}
    \end{subequations}
    which verifies \eqref{eq:dGamma-1}.
\end{proof}

\begin{proof}[Proof of Proposition~\ref{prop:e-geodesic}]
    The e-geodesic connecting the two points $Q, R\in\mc{D}_{c}$ is given by \cite[Section V]{Petz:1994aa}
    \begin{equation}\label{eq:e-geodesic-Petz}
    \Gamma(K+t A),\quad t\in[0,1],\qquad
    K=\logm Q,\quad 
    A=\logm R - \logm Q.
    \end{equation}
    Setting $\Gamma^{-1}(\rho)=\Pi_{c,0}[K]$ and $\mb{T}_{\rho}[X]=A$ yields \eqref{eq:def-Exp-rho-e-b}, since the orthogonal projections $\Pi_{c,0}$ onto $\mc{H}_{c,0}$ are implicitly carried out in \eqref{eq:e-geodesic-Petz} as well, due to Lemma \ref{lem:Gamma-Pi0}. The expression \eqref{eq:def-Exp-rho-e} is equal to \eqref{eq:def-Exp-rho-e-b} due to \eqref{eq:dGamma-1}. It remains to check that the geodesic emanates at $\rho$ in the direction $X$. We compute
    \begin{subequations}
    \begin{align}
    \gamma_{\rho,X}^{(e)}(0)
    &= \Gamma(\Gamma^{-1}(\rho)) = \rho
    \\
    \frac{d}{dt}\gamma_{\rho,X}^{(e)}(0)
    &= \frac{d}{dt}\Gamma\big(\Gamma^{-1}(\rho)+ t d\Gamma^{-1}(\rho)[X]\big)\big|_{t=0}
    \\
    &= d\Gamma\big(\Gamma^{-1}(\rho)\big)\big[d\Gamma^{-1}(\rho)[X]\big]
    = \mrm{id}[X] = X.     
    \end{align}
    \end{subequations}
\end{proof}

\begin{proof}[Proof of Corollary~\ref{cor:Exp-1}]
    Setting
    \begin{equation}
    \mu = \Exp_{\rho}^{(e)}(X)
    \overset{\eqref{eq:def-Exp-rho-e}}{=} \Gamma\big(\Gamma^{-1}(\rho)+d\Gamma^{-1}(\rho)[X]\big)
    \end{equation}
    we solve for $X$,
    \begin{subequations}
    \begin{align}
    \Gamma^{-1}(\mu)
    &= \Gamma^{-1}(\rho)+d\Gamma^{-1}(\rho)[X]
    \\
    d\Gamma^{-1}(\rho)[X]
    &= \Gamma^{-1}(\mu) - \Gamma^{-1}(\rho)
    \\
    X &= d\Gamma\big(\Gamma^{-1}(\rho)\big)\big[\Gamma^{-1}(\mu) - \Gamma^{-1}(\rho)],
    \end{align}
    \end{subequations}
    which shows \eqref{eq:Exp-1}
     and where $d\Gamma(\Gamma^{-1}(\rho))^{-1} = d\Gamma^{-1}(\rho)$ was used to obtain the last equation.
\end{proof}

\begin{proof}[Proof of Lemma~\ref{lem:exp-rho}]
    We compute
    \begin{subequations}
    \begin{align}
    \Exp_{\rho}^{(e)}\circ \Rep_{\rho}[X]
    &\overset{\eqref{eq:def-Exp-rho-e}}{=} \Gamma\big(\Gamma^{-1}(\rho)+\Pi_{c,0}\circ\mb{T}_{\rho}\big[\Rep_{\rho}[X]\big]\big)
    \\
    &\overset{\eqref{eq:def-Rrho}}{=}
    \Gamma\big(\Gamma^{-1}(\rho)+\Pi_{c,0}\circ\mb{T}_{\rho}\big[\mb{T}_{\rho}^{-1}[X]-\la\rho,X\ra\rho\big]\big)
    \\
    &\overset{\rho=\mb{T}^{-1}[I]}{=}
    \Gamma\big(\Gamma^{-1}(\rho)+\Pi_{c,0}[X-\la\rho,X\ra I]\big)
    \\
    &= \Gamma\big(\Gamma^{-1}(\rho) + X\big)
    \end{align}
    \end{subequations}
    and omit the projection map $\Pi_{c,0}$ in the last equation, due to Lemma \ref{lem:repl-proj-rho-commute} or Lemma \ref{lem:Gamma-Pi0}.
\end{proof}

\begin{proof}[Proof of Lemma~\ref{lem:dexp-rho}]
    We compute
    \begin{subequations}
    \begin{align}
    d\exp_{\rho}(X)[Y]
    &\overset{\eqref{eq:def-exp-rho-b}}{=}
    d\Gamma\big(\Gamma^{-1}(\rho) + X)[Y]
    \overset{\eqref{eq:dGamma-H}}{=}
    \mb{T}_{\exp_{\rho}(X)}^{-1}[Y-\la \exp_{\rho}(X),Y\ra I]
    \\
    &\overset{\mb{T}_{\rho}^{-1}[I]=\rho}{=} 
    \mb{T}^{-1}_{\exp_{\rho}(X)}[Y]-\la\exp_{\rho}(X),Y\ra\exp_{\rho}(X)
    \\
    &\overset{\eqref{eq:def-Rrho}}{=}
    \Rep_{\exp_{\rho}(X)}[Y].   \qedhere
    \end{align}
    \end{subequations}
\end{proof}

\begin{proof}[Proof of Lemma~\ref{lem:rho-LD-diagonalized}]
    We compute
    \begin{subequations}
    \begin{align}
    L_{\rho}(D) &= \exp_{\rho}(-\Pi_{c,0}[D])
    \overset{\substack{\eqref{eq:Pic0-Rrho-commute} \\ \eqref{eq:def-exp-rho-a}}}{=}
    \exp_{\rho}(-D)
    \\
    &\overset{\eqref{eq:def-exp-rho-b}}{=}
    \Gamma\big(\Gamma^{-1}(\rho)-D\big)
    \overset{\eqref{eq:def-Gamma-1}}{=}
    \Gamma\big(\Pi_{c,0}[\log_{m}\rho]-D\big)
    \\
    &\overset{\eqref{eq:lem-rho-LD-decomposition}}{=} \Gamma\Big(Q(\log_{m}\Lambda_{\rho})Q^{\T} - \frac{1}{c}(\tr\Lambda_{\rho})I_{c} - Q\Lambda_{D}Q^{\T}\Big)
    \\
    &\overset{\eqref{eq:def-Gamma-a}}{=} 
    Q \frac{\Diag(e^{\log\lambda_{\rho}-\la\eins_{c},\lambda_{\rho}\ra\eins_{\mc{S}_{c}}-\lambda_{D}})}{\la\eins_{c},e^{\log\lambda_{\rho}-\la\eins_{c},\lambda_{\rho}\ra\eins_{\mc{S}_{c}}-\lambda_{D}}\ra} 
    Q^{\T}
    = Q \Diag\Big(\frac{e^{\log\lambda_{\rho}-\lambda_{D}}}{\la\eins_{c},e^{\log\lambda_{\rho}-\lambda_{D}}\ra}\Big)
    Q^{\T}
    \\
    &\overset{\eqref{eq:def_exp_standard_af}}{=} 
    Q \Diag\big(\exp_{\lambda_{\rho}}(-\lambda_{D})\big) Q^{\T}.
    \end{align}
    \end{subequations}
\end{proof}

\begin{proof}[Proof of Proposition~\ref{prop:single-vertex-matrix-flow}]
    Writing $\rho_{0}=\rho(0)=\eins_{\mc{D}_{c}}$ and $\Diag(\lambda_{D})=\Lambda_{D}$, 
    we have $$L_{\rho_{0}}(D)=Q \Diag\big(\exp_{\eins_{\mc{S}_{c}}}(-\lambda_{D})\big) Q^{\T}$$ by Lemma~\ref{lem:rho-LD-diagonalized} and
    \begin{subequations}
    \begin{align}
    \dot\rho(0)
    &\overset{\eqref{eq:single-vertex-matrix-flow-b}}{=} \mb{T}_{\rho_{0}}^{-1}[L_{\rho_{0}}(D)]-\la\rho_{0},L_{\rho_{0}}(D)\ra\rho_{0}
    \\
    &\overset{\eqref{eq:def-mcT-inverse}}{=}
    \int_{0}^{1}\Big(\frac{1}{c}I_{c}\Big)^{1-s} Q \Diag\big(\exp_{\eins_{\mc{S}_{c}}}(-\lambda_{D})\big) Q^{\T} \Big(\frac{1}{c}I_{c}\Big)^{s}ds
    \\ &\qquad\qquad
    - \frac{1}{c^{2}}\underbrace{\tr\Big(Q \Diag\big(\exp_{\eins_{\mc{S}_{c}}}(-\lambda_{D})\big) Q^{\T}\Big)}_{=1} I_{c}
    = \frac{1}{c} Q \Diag\big(\exp_{\eins_{\mc{S}_{c}}}(-\lambda_{D})\big) Q^{\T} - \frac{1}{c^{2}} I_{c}
    \\
    &= \frac{1}{c} Q\Diag\big(\exp_{\eins_{\mc{S}_{c}}}(-\lambda_{D}) - \eins_{\mc{S}_{c}}\big) Q^{\T}.
    \end{align}
    \end{subequations}
    Comparing this equation to the single vertex flow \eqref{eq:single-vertex-AF} at time $t=0$,
    \begin{subequations}\allowdisplaybreaks
    \begin{align}
    \dot p(0) &= \frac{1}{c}\big(\exp_{\eins_{\mc{S}_{c}}}(-D)
    -\frac{1}{c}\underbrace{\la\eins_{c},\exp_{\eins_{\mc{S}_{c}}}(-D)\ra}_{=1}\eins_{c}\big)
    = \frac{1}{c}\big(\exp_{\eins_{\mc{S}_{c}}}(-D)-\eins_{\mc{S}_{c}}\big)
    \end{align}
    \end{subequations}
    shows that
    \begin{equation}
    \dot\rho(0) = Q \Diag\big(\dot\lambda_{\rho}(0)\big) Q^{\T},
    \end{equation}
    that is
    \begin{subequations}
    \begin{align}
    \rho(t) &= Q \lambda_{\rho}(t) Q^{\T},
    \intertext{where $\lambda(t)$ solves the single vertex assignment flow equation \eqref{eq:single-vertex-AF} of the form}
    \dot\lambda_{\rho} &= \Rep_{\lambda_{\rho}}L_{\lambda_{\rho}}(\lambda_{D}).
    \end{align}
    \end{subequations}
    Corollary~\ref{prop:single-vertex-AF} completes the proof.
\end{proof}

\begin{proof}[Proof of Lemma~\ref{lem:Si-rho-explicit}]
    Put 
    \begin{equation}\label{eq:proof-def-Hi}
    H_{i}=\Gamma^{-1}(\rho_{i})
    \overset{\eqref{eq:def-Gamma-1}}{=}
    \Pi_{c,0}\logm \rho_{i},\quad i\in\mc{V}. 
    \end{equation}
    Then
    \begin{subequations}
    \begin{align}
    \big(\Exp^{(e)}_{\rho_{i}}\big)^{-1}\big(L_{\rho_{k}}(D_{k})\big)
    &\overset{\substack{\eqref{eq:def-L-rho-D} \\\eqref{eq:def-exp-rho-b}}}{=} 
    \big(\Exp^{(e)}_{\rho_{i}}\big)^{-1} \circ \Gamma\big(\Gamma^{-1}(\rho_{k})-D_{k}\big)
    \\
    &\overset{\eqref{eq:Exp-1}}{=} d\Gamma\big(\Gamma^{-1}(\rho_{i})\big)[\Gamma^{-1}(\rho_{k})-D_{k}-\Gamma^{-1}(\rho_{i})]
    \\
    &= d\Gamma(H_{i})[H_{k}-D_{k}-H_{i}].
    \end{align}
    \end{subequations}
    Substituting this expression into \eqref{eq:def-Si-rho} yields
    \begin{subequations}
    \begin{align}
    S_{i}(\rho)
    &\overset{\substack{\eqref{eq:def-Exp-rho-e}\\ \eqref{eq:omega-sum-1}}}{=} \Gamma\Big(H_{i}+\underbrace{d\Gamma^{-1}(\rho_{i})\circ d\Gamma(H_{i})}_{=I}\Big[\sum_{k\in\mc{N}_{i}}\w_{ik}(H_{k}-D_{k})-H_{i}\Big]\Big)
    \\
    &= \Gamma\Big(\sum_{k\in\mc{N}_{i}}\w_{ik}(H_{k}-D_{k})\Big).
    \end{align}
    \end{subequations}
    Substituting \eqref{eq:proof-def-Hi} and omitting the projection map $\Pi_{c,0}$ due to Lemma \ref{lem:Gamma-Pi0} yields \eqref{eq:def-Si-rho-simple}.
\end{proof}

\vspace{1cm}

\begin{proof}[Proof of Proposition~\ref{prop:Si-riemannian_center}]
        Substituting as in the proof of Lemma \ref{lem:Si-rho-explicit}, we get
        \begin{subequations}
        \begin{align}
        0 &= 
        d\Gamma\big(\Gamma^{-1}(\ol{\rho})\big)\Big[\sum_{k\in\mc{N}_{i}}\w_{ik}(\Pi_{c,0}\logm\rho_{k}-D_{k})-\Gamma^{-1}(\ol{\rho})\Big].
        \end{align}
        \end{subequations}
        Since $d\Gamma$ is one-to-one, the expression inside the brackets must vanish. Solving for $\ol{\rho}$ and omitting the projection map $\Pi_{c,0}$, due to Lemma \ref{lem:Gamma-Pi0}, gives \eqref{eq:def-Si-rho-simple}.
\end{proof}

\begin{proof}[Proof of Proposition~\ref{prop:reparametrization_QSAF}]
    Let $\rho(t)$ solve \eqref{eq:density-AF} and denote the argument of the replicator operator $\Rep_{\rho}$ on the right-hand side by
    \begin{equation}\label{eq:mu=S-rho}
    \mu(t) := S\big(\rho(t)\big),
    \end{equation}
    which yields \eqref{eq:S-flow-density-a} and \eqref{eq:density-AF}, respectively. It remains to show \eqref{eq:S-flow-density-b}. Differentiation yields
    \begin{subequations}\allowdisplaybreaks
    \begin{align}
    \dot \mu_{i} &= dS_{i}(\rho)[\dot\rho]
    \\
    &\overset{\substack{\eqref{eq:def-Si-rho-simple} \\\eqref{eq:def-mcT}}}{=}
    d\Gamma\Big(\sum_{k\in\mc{N}_{i}}\w_{ik}(\logm\rho_{k}-D_{k})\Big)\Big[
    \sum_{k\in\mc{N}_{i}}\w_{ik}\mb{T}_{\rho_{k}}[\dot\rho_{k}]\Big]
    \\
    &\overset{\eqref{eq:def-Si-rho-simple}}{=} 
    d\Gamma\Big(\Gamma^{-1}(S_{i}(\rho)\big)\Big)\Big[
    \sum_{k\in\mc{N}_{i}}\w_{ik}\mb{T}_{\rho_{k}}[\dot\rho_{k}]\Big]
    \\
    &\overset{\eqref{eq:dGamma-H}}{=}
    \mb{T}_{S_{i}(\rho)}^{-1}\Big[
    \sum_{k\in\mc{N}_{i}}\w_{ik}\mb{T}_{\rho_{k}}[\dot\rho_{k}] - \Big\la S_{i}(\rho),\sum_{k\in\mc{N}_{i}}\w_{ik}\mb{T}_{\rho_{k}}[\dot\rho_{k}]\Big\ra I\Big]
    \\
    &\overset{\mb{T}^{-1}_{\rho}[I]=\rho}{=}
    \mb{T}_{S_{i}(\rho)}^{-1}\Big[
    \sum_{k\in\mc{N}_{i}}\w_{ik}\mb{T}_{\rho_{k}}[\dot\rho_{k}]\Big]
    - \Big\la S_{i}(\rho),\sum_{k\in\mc{N}_{i}}\w_{ik}\mb{T}_{\rho_{k}}[\dot\rho_{k}]\Big\ra S_{i}(\rho)
    \\
    &\overset{\eqref{eq:def-Rrho}}{=}
    \Rep_{S_{i}(\rho)}\Big[
    \sum_{k\in\mc{N}_{i}}\w_{ik}\mb{T}_{\rho_{k}}[\dot\rho_{k}]\Big]
    \overset{\eqref{eq:mu=S-rho}}{=}
    \sum_{k\in\mc{N}_{i}}\w_{ik}\Rep_{\mu_{i}}\big[\mb{T}_{\rho_{k}}[\dot\rho_{k}]\big]
    \\
    &\overset{\eqref{eq:S-flow-density-a}}{=}
    \sum_{k\in\mc{N}_{i}}\w_{ik}\Rep_{\mu_{i}}\Big[\mb{T}_{\rho_{k}}\big[\Rep_{\rho_{k}}[\mu_{k}]\big]\Big]
    \\
    &\overset{\eqref{eq:def-Rrho}}{=}
    \sum_{k\in\mc{N}_{i}}\w_{ik}\Rep_{\mu_{i}}\Big[\mb{T}_{\rho_{k}}\big[\mb{T}_{\rho_{k}}^{-1}[\mu_{k}]-\la\rho_{k},\mu_{k}\ra T_{\rho_{k}}^{-1}[I]\big]\Big]
    \\
    &= \sum_{k\in\mc{N}_{i}}\w_{ik}\Rep_{\mu_{i}}[\mu_{k}-\la\rho_{k},\mu_{k}\ra I]
    \overset{\eqref{eq:Pic0-Rrho-commute}}{=}
    \sum_{k\in\mc{N}_{i}}\w_{ik}\Rep_{\mu_{i}}[\mu_{k}]
    \\
    &\overset{\eqref{eq:def-Omega-rho}}{=}
    \Rep_{\mu_{i}}\big[\Omega[\mu]_{i}\big].
    \end{align}
    \end{subequations}
    The initial condition for $\rho$ is given by \eqref{eq:density-AF}. The initial condition for $\mu$ follows from \eqref{eq:mu=S-rho}.
\end{proof}

\begin{proof}[Proof of Proposition~\ref{prop:potentail-QSAF}]
    We compute using \eqref{eq:ass-symmetry}
    \begin{subequations}
    \begin{align}
    J(\mu)
    &= -\frac{1}{2}\sum_{j\in\mc{V}}\Big\la\mu_{j},\sum_{k\in\mc{N}_{j}}\w_{jk}\mu_{k}\Big\ra
    \\
    \partial_{\mu_{i}}J(\mu)
    &= -\frac{1}{2}\Big(\sum_{k\in\mc{N}_{i}}\w_{ik}\mu_{k} + \sum_{j\in\mc{N}_{i}}\w_{ji}\mu_{j}\Big)
    \overset{\eqref{eq:ass-symmetry-om}}{=} 
    -\Omega[\mu]_{i}.
    \end{align}
    \end{subequations}
    Consequently, by Proposition \ref{prop:grad-Dc} and \eqref{eq:def-Rrho},
    \begin{equation}
    -\ggrad_{\mu_{i}}J(\mu)
    = -\Rep_{\mu_{i}}[\partial_{\mu_{i}}J(\mu)]
    = \Rep_{\mu_{i}}\big[\Omega[\mu]_{i}\big]
    \end{equation}
    which shows that the right-hand sides of \eqref{eq:dot-mu-grad} and \eqref{eq:S-flow-density-b} are equal.
\end{proof}

\begin{proof}[Proof of Proposition~\ref{prop:potentail-QSAF-laplacian}]
    Starting at \eqref{eq:dot-mu-potential} we compute
    \begin{equation}
    J(\mu) = -\frac{1}{2}\la\mu,\Omega[\mu]\ra
    = \frac{1}{2}\la\mu,(\mrm{id}-\Omega-\mrm{id})[\mu]\ra
    = \frac{1}{2}\big(\la\mu,L_{\mc{G}}[\mu]\ra - \|\mu\|^{2}\big).
    \end{equation}
    It remains to rewrite the quadratic form involving $L_{\mc{G}}$.
    \begin{subequations}
    \begin{align}
    \la\mu,L_{\mc{G}}[\mu]\ra
    &= \sum_{i\in\mc{V}}\la\mu_{i},\mu_{i}-\Omega[\mu]_{i}\ra
    = \sum_{i\in\mc{V}}\Big(\|\mu_{i}\|^{2}-\Big\la\mu_{i},\sum_{k\in\mc{N}_{i}}\w_{ik}\mu_{k}\Big\ra\Big)
    \\
    &= \sum_{i\in\mc{V}}\Big(\underbrace{\sum_{k\in\mc{N}_{i}}\w_{ik}}_{=1}\|\mu_{i}\|^{2} - \sum_{k\in\mc{N}_{i}}\w_{ik}\la\mu_{i},\mu_{k}\ra\Big)
    = \sum_{i\in\mc{V}}\sum_{k\in\mc{N}_{i}}\w_{ik}\big(\la\mu_{i},\mu_{i}-\mu_{k}\ra\big)
    \\
    &= \frac{1}{2}\Big(
    \sum_{i\in\mc{V}} \sum_{k\in\mc{N}_{i}}\w_{ik}\big(\la\mu_{i},\mu_{i}-\mu_{k}\ra\big) + 
    \sum_{k\in\mc{V}} \sum_{i\in\mc{N}_{k}}\w_{ki}\big(\la\mu_{k},\mu_{k}-\mu_{i}\ra\big)
    \Big),
    \end{align}
    \end{subequations}
    where the last sum results from the first one by interchanging the indices $i$ and $k$. Using the symmetry relations \eqref{eq:ass-symmetry}, we rewrite the second sum
    \begin{subequations}
    \begin{align}
    \la\mu,L_{\mc{G}}[\mu]\ra
    &= \frac{1}{2}\Big(
    \sum_{i\in\mc{V}} \sum_{k\in\mc{N}_{i}}\w_{ik}\big(\la\mu_{i},\mu_{i}-\mu_{k}\ra\big) + 
    \sum_{i\in\mc{V}} \sum_{k\in\mc{N}_{i}}\w_{ik}\big(\la\mu_{k},\mu_{k}-\mu_{i}\ra\big)
    \Big)
    \\
    &= \frac{1}{2}\sum_{i\in\mc{V}} \sum_{k\in\mc{N}_{i}}\w_{ik}\big(\la\mu_{i},\mu_{i}-\mu_{k}\ra + \la\mu_{k},\mu_{k}-\mu_{i}\ra\big)
    \\
    &= \frac{1}{2}\sum_{i\in\mc{V}} \sum_{k\in\mc{N}_{i}}\w_{ik}\|\mu_{i}-\mu_{k}\|^{2}
    \end{align}
    \end{subequations}
    which completes the proof.
\end{proof}

\vspace{1cm}

\begin{proof}[Proof of Lemma~\ref{lem:prop_of_d_pi}] We start with claim (b).
\begin{enumerate}[(a)]
\item[(b)]
    Let Let $\mu \in \mathcal D_\Pi$ and $X \in T_\mu \mathcal D_\Pi$. Suppose the vector $X$ is represented by a curve $\eta :(-\varepsilon,\varepsilon) \to \mathcal D_\Pi$, such that $\eta(0) = \mu$ and $\eta'(0) = X$. In view of the definition \eqref{eq:def-mcD-Pi} of $\mc{D}_{\Pi}$, we thus have 
    \begin{equation}
    \label{eq:form_of_X}
         \eta(t) = \sum_{i\in [l]} \frac{p_i(t)}{\mathrm{tr} \pi_i} \pi_i \quad \implies \quad X = \sum_{i\in [l]} \frac{p_i'(0)}{\mathrm{tr} \pi_i} \pi_i.
    \end{equation} 
    Consequently, if $\mathcal U = \qty{u_1,...,u_c}$ is a basis of $\mathbb{C}^{c}$ that diagonalizes $\mu$, then the tangent vector $X$ is also diagonal in this basis $\mathcal U$ and $X$ commutes with $\mu$, i.e. $[\mu,X]=0$ and $X \in T_\mu^{c}\mathcal D_c$. This proves (b). 
\item[(a)]
    The bijection $\mathcal D_\Pi \to \mathcal S_{l}$ is explicitly given by
    \begin{equation}
        \Phi_\Pi\colon\mathcal D_\Pi \to \mathcal S_{l}, \qquad \sum_{i \in [l]} \frac{p_i}{\mathrm{tr}\pi_i} \pi_i \mapsto (p_1,...,p_l).
    \end{equation} 
    This is bijective by the definiton of $\mathcal D_\Pi$. It remains to be shown that it is an isometry. Consider another tangent vector $Y\in T_\mu \mathcal D_\Pi$. We know that $\mu,X,Y$ can all be diagonalized in a common eigenbasis. Denote this basis again by $\mathcal U$. Then we can write
    \begin{equation}
        \mu = \sum_{i \in [c]} \tilde p_i u_iu_i^{*}, \qquad X = \sum_{i \in [c]} \tilde x_i u_i u_i^{*}, \qquad
        Y = \sum_{i \in [c]} \tilde y_i u_i u_i^{*}       
    \end{equation}   
    and compute
    \begin{subequations}
    \begin{align}
        \iota^{*}g_{\sst{\mrm{BKM}},\mu}(X,Y) 
        &= 
        \int_0^{\infty} \mathrm{tr} \qty(X(\mu + \lambda I)^{-1}Y(\mu + \lambda I)^{-1}) d\lambda \\
        &= 
        \sum_{i\in [c]} \int_0^{\infty} \mathrm{tr} \bigg( 
            \frac{\tilde x_i \tilde y_i}{(\tilde p_i + \lambda)^{2}} u_i u_i^{\ast} 
        \bigg)d \lambda \\
        & = \sum_{i \in [c]} \frac{\tilde x_i \tilde y_i}{\tilde p_i}.
    \end{align}
    \end{subequations}
    Note that the vector $\tilde p = (\tilde p_1,...,\tilde p_c)$ comes from $\mu \in \mathcal D_\Pi$. Therefore, the value $p_j/\mathrm{tr} \pi_j$ must occur $\mathrm{tr} \pi_j$ times in $\tilde p$, for every $j \in [l]$. This observation holds for the vectors $\tilde x = (\tilde x_1,...,\tilde x_c)$ and $\tilde y = (\tilde y_1,...,\tilde y_c)$ as well. Thus, the sum above can be reduced to
    \begin{equation}
        \sum_{i \in [c]} \frac{\tilde x_i \tilde y_i}{\tilde p_i} = \sum_{j \in [l]} \frac{x_j y_j}{p_j}, 
    \end{equation} 
    where $(p_1,...,p_j) = \Phi(\mu)$, $(x_1,...,x_l) = d \Phi[X]$ and $(y_1,...,y_l) = d \Phi[Y]$.
    Taking into account that $(x_1,...,x_l)$ and $(y_1,...,y_l)$ are the images of $X,Y$ under the differential $d \Phi$, we conclude 
    \begin{equation}
        \iota^{*}g_{\sst{\mrm{BKM}},\mu}(X,Y) =   \sum_{i \in [l]} \frac{x_i y_i}{p_i}
        \overset{\eqref{eq:def-gp}}{=} 
        g_{\sst{\mrm{FR}},\Phi(\mu)}(d \Phi(X),d \Phi(Y)). 
    \end{equation}
    This proves part (a).
    \item[(c)]
    Part (c) is about the commutativity of the diagram
    \begin{equation}
    \label{eq:diag_pr_419}
        \begin{tikzcd}
            \mathcal{D}_{\Pi} \ar[r,hook,"\alpha_\Pi"] \ar[d,"\Phi_\Pi"] & \mathcal{D}_{\Pi_{\mathcal U}} \ar[d,"\Phi_{\Pi_{\mathcal{U}}}"] \\
            \mathcal S_{l} \ar[r,hook,"\beta_\Pi"] & \mathcal S_{c}.
        \end{tikzcd}
    \end{equation}
    The horizontal arrows can be described as follows. Recall that $\Pi = \qty{\pi_1,...,\pi_l}$. Denote by $k_i = \mathrm{tr} \pi_i$ the dimension of the images of the projectors $\pi_i$. For a fixed $p=(p_1,...,p_l)\in \mathcal S_{l}$, set
    \begin{equation}
        P=(P_1,...,P_c) := (\underbrace{p_1/k_1,...,p_1/k_1}_{k_1~\mathrm{times}},...,\underbrace{p_l/k_l,...,p_l/k_l}_{k_l~\mathrm{times}}) \in \mathcal S_{c}.        
    \end{equation}
    Then $\alpha_\Pi$ is given by
    \begin{equation}
        \alpha_\Pi \bigg( \sum_{i \in [l]} \frac{p_i}{k_i}\pi_i\bigg) = \sum_{j\in [c]} P_j u_j u_j^{\ast} \in \mathcal D_{\Pi_{\mathcal{U}}} \qq{and}
        \beta_\Pi(p_1,...,p_l) = (P_1,...,P_c) 
    \end{equation}  
    The diagram \eqref{eq:diag_pr_419} commutes by definition of the $\Phi$ maps.        
\end{enumerate}
\end{proof}

%
\begin{proof}[Proof of Lemma~\ref{prop:commutativity-preservation}]
\begin{enumerate}[(i)]
    \item Due to the commutativity of the components $\mu_i$ of $\mu \in \mathcal Q$, we can simplify the expression for the vector field of the QSAF as follows. 
    \begin{subequations}
    \begin{align}
    \mathfrak R_{\mu}[\Omega[\mu]]_{i} &\overset{\eqref{eq:def-R-rho}}{=}\mathfrak R_{\mu_{i}}\big[\Omega[\mu]_{i}\big]\\
    &\overset{\substack{\eqref{eq:def-Rrho} \\ \eqref{eq:def-mcT-inverse}}}{=}\sum_{k\in\mc{N}_{i}}\w_{ik}\Big(\int_{0}^{1}\mu_{i}^{1-\lambda}\mu_{k}\mu_{i}^{\lambda}d\lambda
    - \tr(\mu_{i}\mu_{k})\mu_{i}\Big)\\
    &= \sum_{k\in\mc{N}_{i}}\w_{ik}\big(\mu_{i}\mu_{k} - \tr(\mu_{i}\mu_{k})\mu_{i}\big).
    \end{align}    
    \end{subequations}
    Invoke that $\mu\in \mathcal D_{\Pi,c}$, such that all the components $\mu_i$ can be written as
    \begin{equation}
        \mu_i = \sum_{r \in [l]} \frac{p^{i}_r}{\mathrm{tr} \pi_r} \pi_r, \qquad p^{i} = (p^i_1,...,p^i_l) \in \mathcal S_l,\quad i \in \mathcal V. 
    \end{equation}
   Then we can further simplify 
    \begin{equation}
        \mu_{i}\mu_{k} = \sum_{r \in [l]} \frac{p^{i}_r p^k_r}{ (\mathrm{tr} \pi_r)^2} \pi_r \qq{and}
        \mathrm{tr}(\mu_{i}\mu_{k})
        =
        \sum_{r \in [l]} \frac{p^{i}_r p^k_r}{\mathrm{tr} \pi_r}
    \end{equation}
    and consequently
    \begin{subequations}
    \begin{align}
        \sum_{k\in\mc{N}_{i}}\w_{ik}\Big(\mu_{i}\mu_{k} - \tr(\mu_{i}\mu_{k})\mu_{i}\Big)
        &=
        \sum_{r \in [l]}
        \sum_{k\in\mc{N}_{i}}\w_{ik}
        \bigg(
        \frac{p^k_r}{\mathrm{tr} \pi_r}  
        -
        \bigg(\sum_{s \in [l]} \frac{p^{i}_s p^k_s}{\mathrm{tr} \pi_s}\bigg)
        \bigg)
        \frac{p^i_r}{\mathrm{tr}\pi_r}     
        \pi_r \\
        &=
        \sum_{r \in [l]}
        \frac{x_r}{\mathrm{tr}\pi_r}     
        \pi_r,
    \end{align}
    \end{subequations}
    where
    \begin{equation}
        x_r : = 
        \sum_{k\in\mc{N}_{i}}\w_{ik}
        \bigg(
        \frac{p^k_r}{\mathrm{tr} \pi_r}  
        -
        \bigg(\sum_{s \in [l]} \frac{p^{i}_s p^k_s}{\mathrm{tr} \pi_s}\bigg)
        \bigg)
        p^i_r.
    \end{equation}
    Thus, $$\mathfrak R_{\mu}[\Omega[\mu]]_{i} = \sum_{r \in [l]} x_r/(\mathrm{tr}\pi_r) \pi_r.$$ This has to be compared with the general form of a tangent vector $X \in T_{\mu_i} \mathcal D_{\Pi}$, given by \eqref{eq:form_of_X}. The only condition the vector $p'(0)$ in \eqref{eq:form_of_X} has to satisfy, is that its components sum to $0$. This holds for $x=(x_1,...x_l)$ as well. We conclude that $\mathfrak R_{\mu}[\Omega[\mu]]_{i}$ lies in $T_{\mu_i}\mathcal D_\Pi$ for all $i \in \mathcal V$, or equivalently, $\mathfrak R_{\mu}[\Omega[\mu]] \in T_\mu \mathcal D_{\Pi,c}$.    
    \item Write $\mu_i = U \Diag(S_i) U^*$ for all $i \in \mathcal V$ with $S_i \in \mathcal S_c$, 
    and express $\mathfrak R_\mu\qty[\Omega[\mu]]$ in terms of $S \in \mathcal W$ as 
    \begin{subequations}
    \begin{align}
        \mathfrak R_{\mu}\qty[\Omega [\mu]]_{i} &= \sum_{k\in\mc{N}_{i}}\w_{ik}\big(\mu_{i}\mu_{k}- \tr(\mu_{i}\mu_{k})\mu_{i}\big) \\
        &= U \Diag\bigg(\sum_{k\in\mc{N}_{i}}\w_{ik}\big(S_i \cdot S_k - \langle S_i,S_k \rangle S_i  \big)\bigg) U^* \\
        &= U \Diag\qty(R_S[\Omega S])_i U^*.
    \end{align}
    \end{subequations}
\end{enumerate}
\end{proof}

\begin{proof}[Proof of Corollary~\ref{cor:af_leq_qaf}]
    Write $D_i = U \Diag\qty(\lambda_i) U^*$ for $\lambda_i \in \mathbb{R}^n$, diagonalized in the $U$-basis. Then the initial condition for the QSAF S-flow \eqref{eq:S-flow-density-b} is given by 
    \begin{equation}
        \mu(0)_i =  S(\eins_{\mathcal Q})_i 
        \overset{\eqref{eq:def-Si-rho-simple}}{=} \Gamma\bigg(\sum_{k \in \mathcal N_i} \omega_{ik}(-D_k)\bigg).
    \end{equation} 	
    Then set $\tilde D_i := \sum_{k \in \mathcal N_i} \omega_{ik}D_k = U \Diag(\tilde \lambda_i) U^*$, where 
    \begin{equation}
        \tilde \lambda_i = \sum_{k \in \mathcal N_i} \omega_{ik} \lambda_k \in \mathbb{R}^c.
    \end{equation}
    Recall further that $\Gamma$ is computed in terms of the matrix exponential as specified by \eqref{eq:def-Gamma}. Thus,
    \begin{equation}
        \mu(0)_i = \Gamma(- \tilde D_i) = \frac{\mathrm{exp}_{\mathrm{m}}(-\tilde D_i)}{\mathrm{tr} \exp_{\mathrm{m}}(-\tilde D_i)}= \frac{U \exp_{\mathrm{m}}(-\Diag(\tilde \lambda_i))U^* }{\mathrm{tr} (U \exp_{\mathrm{m}} (-\Diag(\tilde \lambda_i))U^*)} 
        = U \frac{\Diag(\exp(-\tilde \lambda_i))}{\mathrm{tr} \exp_{\mathrm{m}} (-\Diag(\tilde \lambda_i))} U^*.
    \end{equation}
    This shows that all the $\mu(0)_i's$ are diagonalized by the same basis $\mathcal U$ and $\mu(0) \in \mathcal D_{\Pi_{\mathcal U},c}$ and we can apply Proposition \ref{prop:commutativity-preservation} (ii). Therefore, the vector field of the quantum state assignment S-flow is also diagonalized in the basis $\mathcal U$ and we solve simply for the diagonal components. The quantum S-flow equation can be written as 
    \begin{equation}
        \dot \mu_i = U \Diag(R_{S_i} [\Omega S]) U^* ,  \qquad  \mu(0)_i = U \Diag(S(\eins_\mathcal W)) U^*   
    \end{equation}
    with the classical similarity map $S$ defined in terms of the data vectors $\lambda_i$ and $\mu_i$ related to $S_i \in \mathcal S_c$ by $\mu_i = U \mathrm{diag}(S_i) U^*$. The solution to this system is 
    \begin{equation}
        \mu_i(t) = U \Diag(S_i(t)) U^*,   
    \end{equation} 
    where $S \in \mathcal W$ solves the classical S-flow equation $\dot S = R_S [\Omega S]$ and $S(0) =S (\eins_{\mathcal W})$
\end{proof}


\newpage
\bibliographystyle{amsalpha}
\bibliography{QSAF2023}

\clearpage
\end{document}